\newcommand{\abs}{\vspace*{6pt}}
\DeclareMathOperator{\vol}{vol}
\DeclareMathOperator{\pr}{pr}
\DeclareMathOperator{\id}{id}
\DeclareMathOperator{\const}{const.}
\DeclareMathOperator{\Iso}{Iso}
\DeclareMathOperator{\grad}{grad}
\DeclareMathOperator{\Hess}{Hess}
\DeclareMathOperator{\diag}{diag}
\DeclareMathOperator{\Fix}{Fix}
\DeclareMathOperator{\la}{\langle}
\DeclareMathOperator{\ra}{\rangle}
\newcommand{\h}{h_{\text{top}}}
\newcommand{\skp}{{\langle .,. \rangle}}
\newcommand{\R}{\mathbb{R}}
\newcommand{\Z}{\mathbb{Z}}
\newcommand{\N}{\mathbb{N}}
\newcommand{\C}{\mathbb{C}}
\newcommand{\g}{\mathfrak{g}}
\renewcommand{\g}{\gamma}
\newcommand{\LL}{\mathcal{L}}
\renewcommand{\H}{\mathcal{H}}
\newcommand{\RR}{\mathcal{R}}
\newcommand{\J}{\mathcal{J}}
\renewcommand{\O}{\mathcal{O}}
\newcommand{\M}{\mathcal{M}}
\newcommand{\G}{\mathcal{G}}
\newcommand{\T}{{\mathbb{T}^2}}
\newcommand{\e}{\varepsilon}
\newcommand{\del}{\xi}
\newcommand{\al}{\alpha}
\newcommand{\om}{\omega}
\newcommand{\lam}{\lambda}
\newcommand{\Gam}{\Gamma}
\theoremstyle{plain}
\newtheorem{defn}{Definition}[section]
\newtheorem{lemma}[defn]{Lemma}
\newtheorem{thm}[defn]{Theorem}
\newtheorem{cor}[defn]{Corollary}
\newtheorem{notation}[defn]{Notation}
\newtheorem{mainthm}[defn]{Main Theorem}
\theoremstyle{definition}
\newtheorem{remark}[defn]{Remark}
\newcommand{\mane}{Ma\~n\'e}
\newcommand{\Poincare}{Poincar\'e }
\def\BibTeX{{\rm B\kern-.05em{\sc i\kern-.025em b}\kern-.08em
    T\kern-.1667em\lower.7ex\hbox{E}\kern-.125emX}}
\begin{document}

\hypersetup{pdftitle = {Minimal rays on surfaces of genus greater than one -- Part II}, pdfauthor = {Jan Philipp Schr\"oder}}

\title[Minimal rays on surfaces -- Part II]{Minimal rays on surfaces \\ of genus greater than one \\ -- Part II}
\author[J. P. Schr\"oder]{Jan Philipp Schr\"oder}
\address{Faculty of Mathematics \\ Ruhr University \\ 44780 Bochum \\ Germany}
\email{\url{jan.schroeder-a57@rub.de}}
\date{\today}
\keywords{Finsler metric, minimal geodesic, surface of higher genus, Busemann function}
\subjclass[2010]{Primary 37D40, Secondary 37E99, 53C22}

\begin{abstract}
 We consider any Finsler metric on a closed, orientable surface of genus greater than one. H. M. Morse proved that we can associate an asymptotic direction to minimal rays in the universal cover (in the Poincar\'e disc: a point on the unit circle). We prove here that, if two minimal rays have a common asymptotic direction, which is not a fixed point of the group of deck transformations, then the two rays can intersect at most in a common initial point. This has strong consequences for the structure of the set of minimal geodesics, as well as for the set of Busemann functions associated to the Finsler metric.
\end{abstract}

\maketitle



\section{Introduction and main results}

Let us introduce the relevant objects. Let $M$ be a closed, orientable surface of genus $>1$ and $X$ its universal cover. On $M$, there exists a (hyperbolic) Riemannian metric $g$ of constant curvature $-1$, which can be lifted to $X$, making $(X,g)$ isometric to the \Poincare disc model of the hyperbolic plane, i.e.
\[ X=\{z\in\C : |z|<1\}, \qquad g_z(v,w) = 4\cdot(1-|z|^2)^{-2}\la v,w\ra, \]
writing $\skp,|.|$ for the euclidean inner product and its norm in $\C$. The distance induced by $g$ on $X$ will be denoted by $d_g$. We write $\Gamma\subset\Iso(X,g)$ for the group of deck transformations $\tau:X\to X$ with respect to the covering $X\to M = X/\Gam$, which extend to naturally to $S^1$. Let us write
\[ \Fix(\Gam)=\{\del\in S^1 ~|~ \exists \tau\in\Gam-\{\id\} : \tau\del=\del \}. \]
The $g$-geodesics $\g$ in $X$ are circle segments meeting the ``boundary at infinity'' $S^1=\{z\in\C : |z|=1\}$ orthogonally; the endpoints of $\g$ are denoted by $\g(-\infty), \g(\infty) \in S^1$. Let $\G$ be the set of all oriented, unparametrized $g$-geodesics $\g\subset X$. Recall that any $\tau\in\Gam-\{\id\}$ has exactly two distinct fixed points in $S^1$, which are connected by a unique $g$-geodesic $\g\in \G$ being invariant under $\tau$, $\tau\g=\g$; we will call $\g$ the axis of $\tau$.

We consider any Finsler metric $F:TX\to\R$, which is assumed to be invariant under $\Gamma$ (i.e. being the lift of a Finsler metric on $M$). We write $SX=\{F=1\}\subset TX$ and $c_v:\R\to X$ for the geodesic with respect to $F$ defined by $\dot c_v(0)=v$. We are interested in {\em rays} and {\em minimal geodesics} (with respect to $F$), that is $F$-geodesics $c:[0,\infty)\to X, ~ c:\R\to X$, respectively that, in the universal cover $X$ of $M$, minimize the $F$-length between any of their points. Often, the first step towards understanding the geodesic flow of $F$ is to study the minimal geodesics -- their behavior is to some extend prescribed by the topology of the underlying manifold and hence minimal geodesics form a basic framework for all geodesics; moreover, since the geodesic flow is defined in terms of local minimizers a variational problem, it is natural to first study the global minimizers of the problem.

In 1924, H. M. Morse \cite{morse} studied the global behavior of minimal geodesics on $M$ and proved two theorems, which we recall now.

\begin{thm}[Morse \cite{morse}]\label{morse lemma intro}
 In the notation above, rays and minimal geodesics in $(X,F)$ tend to $\pm\infty$ in tubes around $g$-geodesics $\g\in\G$ of finite width $D$, where $D$ is a constant depending only on $F$ and $g$. In particular, rays $c:[0,\infty)\to X$ have a well-defined endpoint $c(\infty) \in S^1$. 
\end{thm}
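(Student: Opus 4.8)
The strategy is to show that minimal geodesics and rays, although they are geodesics of $F$, are \emph{quasi-geodesics} of the hyperbolic plane $(X,g)$, and then to feed this into the stability of quasi-geodesics in negative curvature. First, since $F$ is $\Gamma$-invariant and $M=X/\Gamma$ is compact, comparing the $F$-unit sphere bundle with the $g$-unit sphere bundle over the compact $M$ gives a constant $a\ge 1$ with $a^{-1}|v|_g\le F(v)\le a\,|v|_g$ for all $v\in TX$; integrating along curves, the induced distances satisfy $a^{-1}d_g(x,y)\le d_F(x,y)\le a\,d_g(x,y)$ for all $x,y\in X$. Now let $c$ be a ray or a minimal geodesic, parametrized by $F$-arclength; by definition of minimality, $d_F(c(s),c(t))=|s-t|$ for all $s,t$ in the domain of $c$, whence
\[ a^{-1}|s-t| \ \le\ d_g(c(s),c(t)) \ \le\ a\,|s-t|. \]
Thus $c$ is an $(a,0)$-quasi-geodesic of $(X,d_g)$; in particular $d_g(c(s),c(t))\to\infty$ as $|s-t|\to\infty$, so $c$ eventually leaves every compact subset of $X$.

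The crucial input is that $(X,d_g)$ is a hyperbolic plane: geodesic triangles are uniformly thin, equivalently $g$-geodesics diverge exponentially. From this one obtains a constant $D=D(a)>0$, depending only on $a$ and hence only on $F$ and $g$, such that every $(a,0)$-quasi-geodesic \emph{segment} in $(X,d_g)$ is contained in the $d_g$-neighborhood of radius $D$ of the $g$-geodesic segment joining its two endpoints. One may either quote the Morse Lemma on stability of quasi-geodesics in Gromov-hyperbolic spaces, or argue directly as Morse did: if a minimal segment $c|_{[s,t]}$ strayed $d_g$-distance $>D$ from the chord $[c(s),c(t)]_g$, then for $D$ large (as a function of $a$) the exponential growth of the hyperbolic distance would allow one to replace the excursion by a path running near the chord whose $g$-length, and therefore -- up to the multiplicative constant $a^{2}$ from the first step -- whose $F$-length, is strictly smaller, contradicting that $c$ minimizes $F$-length between its points.

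It remains to pass to $S^1$ and identify the tube. For a ray $c:[0,\infty)\to X$, the points $c(t)$ escape to infinity; applying the previous step to the segments $c|_{[0,t]}$ shows that the $g$-geodesic segments $[c(0),c(t)]_g$ stabilize as $t\to\infty$ (their initial directions form a Cauchy family, since for $t'<t$ the point $c(t')$ lies $D$-close to $[c(0),c(t)]_g$ while $d_g(c(0),c(t))\to\infty$), so $c(t)$ converges to a point $c(\infty)\in S^1$ and $c$ lies in the $D$-neighborhood of the $g$-geodesic ray from $c(0)$ to $c(\infty)$. For a minimal geodesic $c:\R\to X$ the same reasoning applied as $t\to+\infty$ and $t\to-\infty$ yields endpoints $c(\pm\infty)\in S^1$, which are distinct because $d_g(c(-t),c(t))\to\infty$; hence there is a unique $\g\in\G$ with $\g(\pm\infty)=c(\pm\infty)$, and applying the stability step to the segments $c|_{[-t,t]}$ together with $[c(-t),c(t)]_g\to\g$ places $c(\R)$ inside a $D$-tube around $\g$ (after enlarging $D$ by a controlled amount). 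In every case $D$ depends only on $F$ and $g$.

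The soft parts are the metric comparison and the passage to the boundary; the heart of the matter is the second step, i.e. converting the negative curvature of $g$ into the quantitative assertion that a minimal (hence $F$-geodesic) curve cannot stray more than $D(a)$ from its hyperbolic chord. Making the ``a shortcut along the chord beats the excursion'' estimate rigorous, and keeping $D$ controlled explicitly in terms of $a$ via the exponential divergence of $g$-geodesics, is the step I expect to require the real work.
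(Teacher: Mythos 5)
The paper does not actually prove this theorem; it cites Morse (and later Zaustinsky for the Finsler case, Klingenberg in higher dimensions) and only restates a quantitative version as the Morse Lemma (Theorem~\ref{morse lemma}) in Section~\ref{section basics}, again without proof. So there is no ``paper proof'' to compare against, but your outline is a correct and by now standard modern route: pass from the compactness of $M$ and $\Gamma$-invariance of $F$ to the uniform bi-Lipschitz comparison $c_F^{-1}d_F\le d_g\le c_F d_F$, observe that a minimal $F$-geodesic parametrized by $F$-arclength is therefore an $(a,0)$-quasi-geodesic of the hyperbolic plane $(X,d_g)$, and then invoke stability of quasi-geodesics in a Gromov-hyperbolic (here CAT$(-1)$) space. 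Your observation that this gives the Finsler case essentially for free -- the non-reversibility of $F$ is harmless because $d_g$ is symmetric and the two one-sided comparisons with $d_F$ still hold after integrating the fiberwise inequality -- is precisely Zaustinsky's point. The historical proof of Morse is the ``shortcut along the chord'' argument you sketch in your third paragraph, and the modern quasi-geodesic stability lemma packages exactly that estimate; the quantitative content is the same.

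One small imprecision worth fixing: for a two-sided minimal geodesic $c:\R\to X$, the distinctness of $c(-\infty)$ and $c(+\infty)$ does \emph{not} follow merely from $d_g(c(-t),c(t))\to\infty$ (two sequences can diverge from each other in $d_g$ while converging to the same point of $S^1$, e.g. along a horocycle). The correct argument is again via the stability lemma you already have: if $c(-\infty)=c(\infty)=\del$, the $g$-geodesic chords $[c(-t),c(t)]_g$ have both endpoints converging to $\del$ and hence leave every compact subset of $X$, contradicting that $c(0)$ stays within $d_g$-distance $D$ of each chord. Equivalently, $c$ lies within Hausdorff distance $D$ of a genuine $g$-geodesic, which cannot be degenerate since $d_g(c(s),c(t))\to\infty$.
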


For $\del\in S^1$ and $\g\in\G$ set
\begin{align*}
\RR_+(\del) & := \{v \in SX ~|~ c_v:[0,\infty)\to X \text{ is a ray, } c(\infty)=\del \}, \\
\M(\g) & := \{v \in SX ~|~ c_v:\R\to X \text{ is a minimal geodesic, } c(\pm\infty)=\g(\pm\infty) \}.
\end{align*}
Hence, any $F$-ray in $X$ belongs to some $\RR_+(\del)$, while any $F$-minimal geodesic belongs to some $\M(\g)$. Conversely, we have $\M(\g)\neq \emptyset$ for all $\g\in \G$ and $\RR_+(\del)$ projects to all of $X$ for all $\del\in S^1$.

The second theorem clarifies the structure of some of the sets $\RR_+(\del)$ for fixed $\del\in S^1$. If $\tau\in\Gam-\{\id\}$ has as its axis some $\g\in \G$ and if $\del=\g(\infty) \in \Fix(\Gam)$ is the corresponding fixed point of $\tau$, then the structure of $\RR_+(\del)$ can be described as follows, cf. Figure \ref{fig_periodic}. Let us write
\[ \M_{per}(\g) := \{ v\in \M(\g) : \tau c_v(\R)=c_v(\R) \}. \]

\begin{thm}[Morse \cite{morse}]\label{morse periodic}
 If $\tau\in \Gam$ and if $\g\in\G$ is the axis of $\tau$ with endpoint $\del := \g(\infty) \in \Fix(\Gam)$, then $\M_{per}(\g)\neq \emptyset$. Moreover, no ray from $\RR_+(\del)-\M_{per}(\g)$ can intersect any minimal geodesic from $\M_{per}(\g)$ and every ray in $\RR_+(\del)$ is asymptotic near $+\infty$ to some minimal geodesic from $\M_{per}(\g)$. All minimal geodesics in $\M(\g)-\M_{per}(\g)$ are heteroclinic between a pair of neighboring periodic minimal geodesics in $\M_{per}(\g)$.
\end{thm}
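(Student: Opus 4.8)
I would pass to the quotient cylinder $C:=X/\langle\tau\rangle$ --- replacing $\tau$ by a primitive root first, which by the rigidity step below leaves $\M_{per}(\g)$ unchanged --- on which $F$ descends, whose unique closed $g$-geodesic is the image $\bar\g$ of the axis, and into whose fixed compact collar $A$ of $\bar\g$ every ray and every member of $\M(\g)$ projects, by Theorem \ref{morse lemma intro}. I would then produce an element of $\M_{per}(\g)$ by minimizing the $F$-length over closed curves of $C$ freely homotopic to $\bar\g$, equivalently over paths in $X$ from $z$ to $\tau z$: Theorem \ref{morse lemma intro} confines minimizing sequences to $A$, so a compactness argument together with lower semicontinuity of length yields a closed $F$-geodesic $\bar c_0$ with $\tau$-invariant lift $c_0$, $c_0(\pm\infty)=\g(\pm\infty)$. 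That $c_0$ is globally minimal, hence in $\M_{per}(\g)$, I would reduce --- by iterating a putative shortcut of a subarc --- to the length identity $d_F(z,\tau^n z)=n\cdot\length(\bar c_0)$ (a shortest closed curve of class $n\bar\g$ is $n$ times as long as one of class $\bar\g$), proved by induction on $n$: a class-$n\bar\g$ minimizer is not simple for $n\ge2$, so it splits at a self-intersection into two loops of classes --- $0$, $+\bar\g$, or $-\bar\g$ on an annulus --- summing to $n\bar\g$, and minimality rules out $0$ and $-\bar\g$, leaving a class-$\bar\g$ loop to peel off.

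\textbf{Order structure.} Next I would invoke the classical crossing lemma: two distinct minimal geodesics either agree along an interval or meet in at most one point, a double intersection bounding a geodesic bigon whose corners round off to a strictly shorter curve. This gives: distinct elements of $\M_{per}(\g)$ have disjoint images (a crossing would be reproduced by each $\tau^n$); every minimal line with endpoints $\g(\pm\infty)$ meets a fixed short geodesic transversal $I$ of $\g$ in exactly one point, so $\M_{per}(\g)$ is totally ordered and, being a $C^0$-limit set of minimal geodesics --- with $\tau$-invariance, and via periodicity of $d_g(\cdot,\g)$ along a $\tau$-invariant line also the endpoint condition, passing to limits --- compact; its complement in $A$ is a disjoint union of $\tau$-invariant open ``gaps'', each between a consecutive pair $c_-,c_+\in\M_{per}(\g)$, plus two outer regions. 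It also yields the rigidity used above: a minimal line with endpoints $\g(\pm\infty)$ that is $\tau^n$-invariant projects to a length-minimizing closed geodesic of class $n\bar\g$, which for $n\ge2$ would carry a transverse self-intersection excluded by the argument of the existence step, so it is in fact $\tau$-invariant.

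\textbf{Asymptotic structure.} The heart of the matter is the behavior at $\pm\infty$, which I would control through $\omega$- and $\alpha$-limits in $A$, using two inputs: every velocity limit $\lim\dot c(t_k)$, $t_k\to\infty$, along a ray or a member of $\M(\g)$ is again a minimal \emph{line} of $\M(\g)$; and two minimal geodesics sharing an ideal endpoint stay at bounded $d_F$-distance on that side (via Busemann functions). For $c\in\M(\g)\setminus\M_{per}(\g)$: by the crossing lemma and rigidity, $c$ and each $\tau^n c$ are disjoint, and $\{\tau^n c\}$ is ordered monotonically across a single gap $G=(c_-,c_+)$; its supremum and infimum are then $\tau$-invariant minimal lines in $\overline G$, hence $c_+$ and $c_-$, and unwinding $\tau^n c\to c_\pm$ ($n\to\pm\infty$) on a fundamental domain of $\langle\tau\rangle$ says exactly that $c$ is asymptotic to $c_+$ at one end and to $c_-$ at the other --- heteroclinic between the neighboring pair. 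In particular no recurrent member of $\M(\g)$ is non-periodic. For a ray $c\in\RR_+(\del)$ not contained in an element of $\M_{per}(\g)$: it meets each $c_\al\in\M_{per}(\g)$ at most once, so its tail lies in the closure of one gap or outer region and its $\omega$-limit set $\Om$ is a compact connected flow-invariant subset of $\M(\g)$ there; $\Om$ contains a recurrent hence periodic geodesic, and being connected it cannot contain two disjoint closed geodesics nor (via the closure of a heteroclinic orbit) a heteroclinic, so $\Om$ equals one of $c_\pm$ --- the asymptoticity statement. The non-intersection statement then follows: if such a $c$ passed through a point $p$ of $c_\al$, the two distinct minimal rays from $p$ to $\del$ along $c$ and along $c_\al$ would force $c$ to cross $c_\al$ once and then be asymptotic to $c_\al$ from the side it entered; but the monotone $\tau$-translates of that half-line converge to a $\tau$-invariant minimal line strictly between $c_\al$ and $c$, leaving $c$ asymptotic to two distinct periodic geodesics --- against the bounded-distance property above.

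\textbf{Main obstacle.} The work is concentrated in the asymptotic step: pinning $\omega$- and $\alpha$-limits to \emph{single periodic} geodesics, and thereby excluding chains, non-periodic recurrent minimal geodesics, and geodesics that cross and then spiral onto $\M_{per}(\g)$. This rests on the order structure from the crossing lemma, the $\tau$-equivariance, the collar compactness of Theorem \ref{morse lemma intro}, and the bounded-distance behavior of minimal geodesics sharing an ideal endpoint; the auxiliary identity $d_F(z,\tau^n z)=n\cdot\length(\bar c_0)$ and its self-intersection proof are the remaining technical ingredient.
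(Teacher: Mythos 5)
The paper does not prove Theorem~\ref{morse periodic}: it is quoted as a classical result of Morse~\cite{morse} (with the Finsler extension due to Zaustinsky~\cite{zaustinsky}) and is used throughout only as a black box. So there is no ``paper's own proof'' to compare against, and your attempt has to be judged on its own.

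Your outline reproduces the standard Morse architecture correctly in several places: passing to the cylinder $X/\langle\tau\rangle$ with $\tau$ primitive, minimizing $F$-length over closed curves freely homotopic to $\bar\g$, the identity $d_F(z,\tau^n z)=n\cdot\length(\bar c_0)$ via self-intersection splitting (this is genuinely how one promotes a shortest closed geodesic to a \emph{globally} minimal geodesic, cf.\ Hedlund's torus argument), the crossing lemma as the source of order structure, and $\omega$-/$\alpha$-limit analysis in the compact collar of $\bar\g$. These are the right ingredients.

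The weak point is the non-intersection step, and the gap there is genuine. You assert that a ray $c\in\RR_+(\del)$ passing through $p\in c_\alpha(\R)$ and crossing into a gap would be ``forced to be asymptotic to $c_\alpha$ from the side it entered.'' That is exactly what Lemma~\ref{crossing minimals} \emph{forbids}: two distinct rays from $p$ to $\del$ stay at definite $d_g$-distance, so $c$ cannot asymptote to $c_\alpha$ on that side. Hence, by your own asymptoticity statement, $c$ would have to asymptote to the \emph{other} bounding geodesic $c_\beta$ of the gap --- the case that actually needs to be excluded. Your attempt to rule it out via ``monotone $\tau$-translates of the half-line converging to a $\tau$-invariant minimal line strictly between $c_\alpha$ and $c$'' does not work: the $\tau^{-n}$-translates of the forward half-ray of $c$ converge precisely to (sub-arcs of) $c_\beta$, not to a new invariant line strictly in between, so no two-asymptote contradiction is produced. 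To close this you need a different mechanism --- typically one compares, for large $T$ and large $n$, the minimal segment $c|_{[0,T]}$ against a competitor built from the arc of $c_\alpha$ from $p$ to $\tau^n p$ followed by a short bridge into the gap, using that the intersection angle at $p$ is bounded away from zero and that both $c$ and $c_\alpha$ end at $\del$; alternatively, one can argue via the heteroclinics of the gap, which by the asymptotic step spiral from $c_\beta$ toward $c_\alpha$ and hence must cross $c$ infinitely often in the cylinder, and then turn these repeated crossings into a surplus of length. Either way, a new idea is required here, not just the crossing lemma plus $\tau$-translation.

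A smaller point: the claim that the $\omega$-limit of a ray trapped in a gap ``cannot contain a heteroclinic'' because it is connected is too quick. The closure of a heteroclinic is itself a compact connected invariant set, so connectedness alone does not exclude it; you have to use that re-entering the middle of the gap after visiting a neighborhood of $c_+$ and then of $c_-$ forces a second transverse crossing of some minimal geodesic, which Lemma~\ref{crossing minimals} forbids. The idea is right but the justification as written is incomplete.
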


Note that in the last statement of Theorem \ref{morse periodic}, we implicitly exclude any minimal geodesic in $\M(\g)$ to be homoclinic to a single periodic minimal geodesic in $\M_{per}(\g)$. In particular, if $\M_{per}(\g)$ consists of only one geodesic, then $\M(\g)=\M_{per}(\g)$.

\begin{figure}
\centering
\includegraphics[scale=0.4]{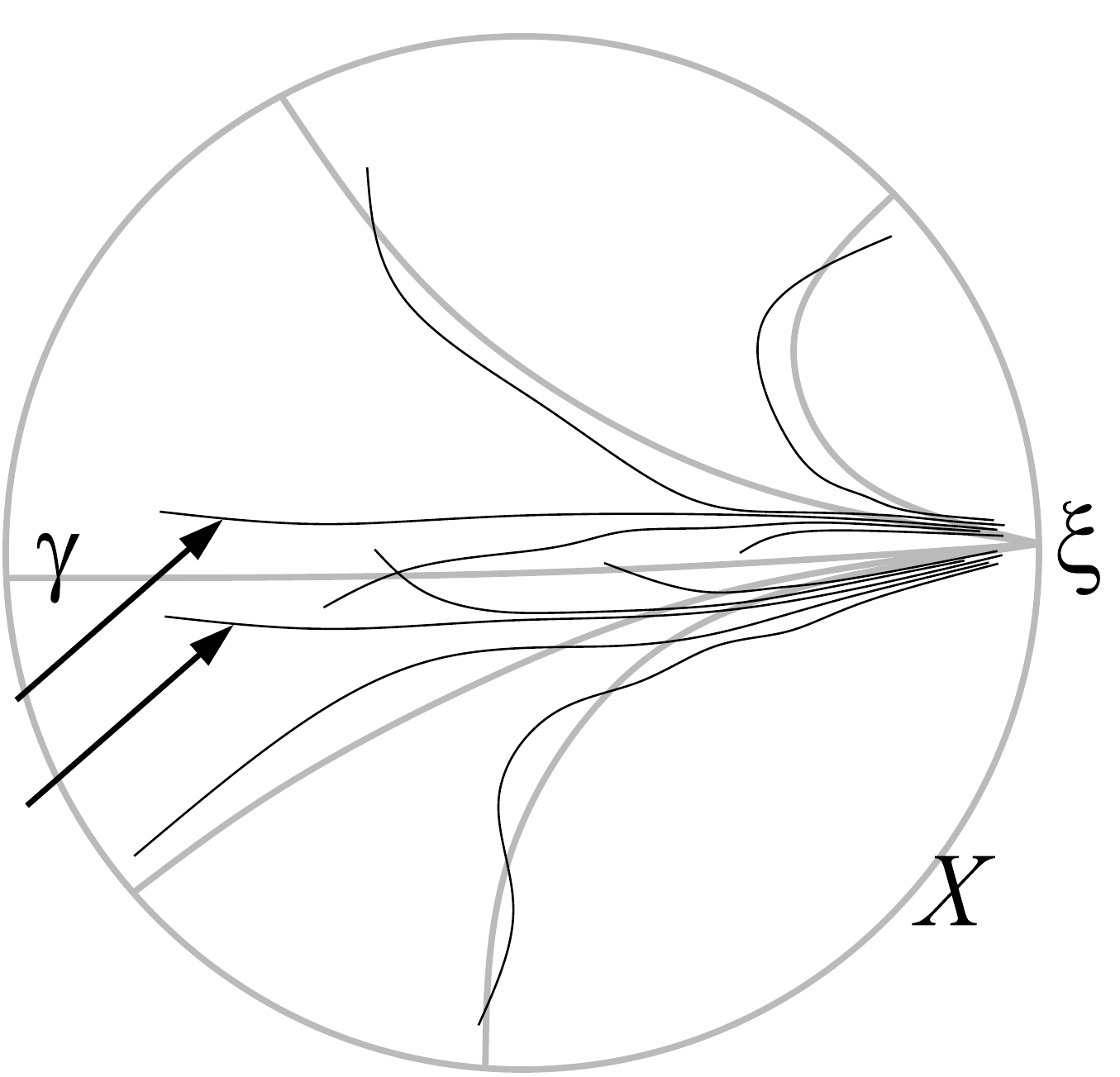}
\caption{The structure of $\RR_+(\del)$ in $X\subset \C$ in Theorem \ref{morse periodic} for $\del\in \Fix(\Gam)\subset S^1$. Minimal rays with respect to $F$ are depicted in black, their corresponding $g$-geodesics in gray. $\g$ is the axis of $\tau$ and the two arrows show a pair of $\tau$-invariant minimal geodesics in $\M_{per}(\g)$. \label{fig_periodic}}
\end{figure}

\abs

Let us remark that in 1932, G. A. Hedlund \cite{hedlund} proved results analogous to Theorems \ref{morse lemma intro} and \ref{morse periodic} for the case of a genus $1$ surface, i.e. the 2-torus $\T=\R^2/\Z^2$. Here the minimal geodesics and rays move along straight euclidean lines in the universal cover $X=\R^2$ and hence also here we can define the sets $\RR_+(\del)$ with $\del\in S^1$, $c(\infty)=\del$ being the direction of the accompanying line. Then for $\del\in S^1$ with rational slope -- that is ``fixed points'' of $\Z^2$ -- the set $\RR_+(\del)$ has the analogous structure as in the case of genus $>1$ in Theorem \ref{morse periodic}, the difference being that, as a set, $\M_{per}(\g)$ is $\Z^2$-invariant, while for genus $>1$ it is only $\tau$-invariant.

Later, in 1988, V. Bangert \cite{bangert} proved moreover, that the sets $\RR_+(\del)$ with $\del\in S^1$ of irrational slope have a rather simple structure: the rays in $\RR_+(\del)$ can intersect at most in common initial points and the set of minimal geodesic with a fixed irrational slope contains no intersecting minimal geodesics in $\R^2$ (in fact, not even in $\T$).

\abs

The results of Hedlund and Bangert together give a complete picture of the structure of all rays in the case of the torus. Returning to the case of genus $>1$, what is left open in understanding the structure of all rays is the structure of the sets $\RR_+(\del)$ with $\del\notin\Fix(\Gam)$. Building on our work from \cite{paper1}, we will prove here the following theorem.

\begin{mainthm}\label{main thm}
 If $\del\in S^1-\Fix(\Gam)$, then
 \[ \liminf_{t\to\infty} d_g(c_v(\R),c_w(t)) = 0  \qquad \forall v,w\in \RR_+(\del). \]
\end{mainthm}

Hence, any two rays in $\RR_+(\del)$ with $\del\notin \Fix(\Gam)$ will approach each other at some points near $+\infty$.

\begin{remark}
 It was previously known \cite{coudene-schapira}, that if $F$ is a Riemannian metric on $M$ with non-positive curvature $K\leq 0$, then any flat strip in $X$, i.e. an isometrically embedded euclidean strip
 \[ (\R\times [0,a], \skp_{euc}) \hookrightarrow (X,F), \qquad a > 0 , \]
 is periodic, i.e. bounded by two axes of some non-trivial $\tau\in \Gam-\{\id\}$. All other flat strips in such a non-positively curved surface have to be ``thin'' in the ends near $\pm\infty$. Main Theorem \ref{main thm} is a generalization of this fact to general Finsler metrics on $M$.
\end{remark}

Main Theorem \ref{main thm} has a series of corollaries.

\begin{cor}\label{cor no intersections}
 If $\del\in S^1-\Fix(\Gam)$ and if $v,w\in \RR_+(\del)$ with $c_w(0)=c_v(a)$ for some $a>0$, then $c_w(t)=c_v(t+a)$.
\end{cor}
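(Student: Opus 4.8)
The plan is to deduce the corollary from Main Theorem \ref{main thm} by a soft argument combining the ``approaching'' conclusion of the Main Theorem with the standard no-crossing feature of minimizers. First I would set up the situation: we are given $v,w\in\RR_+(\del)$ with $\del\notin\Fix(\Gam)$ and $c_w(0)=c_v(a)$ for some $a>0$; after reparametrizing $c_v$ (replacing $v$ by $\dot c_v(a)$) we may assume $a=0$, so $c_v(0)=c_w(0)$ and both rays emanate from the same point with endpoint $c_v(\infty)=c_w(\infty)=\del$. We want to show $c_v\equiv c_w$.

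The key step is to argue that two distinct $F$-rays sharing an initial point cannot later become arbitrarily close without crossing transversally, and that a transversal crossing of two minimizers leads to a contradiction with minimality (the usual ``corner-cutting'' argument: if two minimal segments cross transversally, one can build a shorter connection between two of their points, or at least a non-geodesic minimizer, which is impossible for $F$-geodesics). Concretely, suppose $c_v\ne c_w$; since they share the point $c_v(0)=c_w(0)$ but are distinct $F$-geodesics, they have distinct initial velocities and hence separate for small $t>0$. By Main Theorem \ref{main thm}, $\liminf_{t\to\infty}d_g(c_v(\R),c_w(t))=0$, so the rays come back $g$-close to each other near $+\infty$; combined with Morse's Theorem \ref{morse lemma intro} (both rays stay in bounded tubes around $g$-geodesics tending to the same point $\del\in S^1$), one shows the two rays must in fact intersect again at some parameter $t_0>0$, i.e. $c_w(t_0)=c_v(s_0)$ for some $s_0>0$. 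Then the two distinct minimal segments $c_v|_{[0,s_0]}$ and $c_w|_{[0,t_0]}$ join the same pair of points $c_v(0)$ and $c_v(s_0)=c_w(t_0)$; by minimality both have $F$-length equal to $d_F(c_v(0),c_v(s_0))$, so we have two distinct $F$-minimizing segments between the same endpoints that agree at both endpoints — but they have different initial directions at $c_v(0)$, and concatenating an initial piece of one with a terminal piece of the other produces a minimizing curve with a corner, contradicting that $F$-minimizers are $F$-geodesics (i.e. smooth). Hence $c_v\equiv c_w$, which after undoing the reparametrization gives exactly $c_w(t)=c_v(t+a)$.

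The main obstacle I expect is promoting the $\liminf$-closeness from the Main Theorem to an actual second intersection point of the two rays. The Main Theorem only gives that $c_w(t)$ gets $g$-close to the image $c_v(\R)$ along a sequence $t_n\to\infty$, not that the two rays literally meet. To get a genuine intersection one needs a topological/ordering argument in the disc: since both rays lie in finite-width tubes around $g$-geodesics with the common endpoint $\del$, and since they start at the same point, the region they sweep out is ``pinched'' at both ends, so if they ever separate (at small $t$) and then come back within $g$-distance $o(1)$ of each other near $\del$, a Jordan-curve / intermediate-value argument forces the curves to cross again. One must also handle the degenerate possibility that they approach tangentially without crossing — here the fact that distinct $F$-geodesics through a common point have distinct $1$-jets, together with uniqueness of geodesics, rules out that $c_w$ stays on one side of $c_v(\R)$ and only touches it without crossing while remaining a distinct ray; alternatively, a ``touching'' of two minimizers already contradicts uniqueness of the initial-value problem for $F$-geodesics. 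Making this topological step rigorous (choosing the right arcs, orienting them consistently, and invoking the Jordan curve theorem in $X\cup\{\del\}$) is the technical heart of the argument; everything else is the routine no-crossing lemma for minimizers.
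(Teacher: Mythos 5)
Your plan deviates from the paper's proof and, as written, contains a genuine gap. The paper's proof is a two-line deduction: combine the Main Theorem (which gives $w_+(\del)=0$, i.e.\ $\liminf_{t\to\infty}d_g(c_v(\R),c_w(t))=0$) with Lemma~\ref{crossing minimals}, which says that if $v,w\in\RR_+$ satisfy $\pi w=c_v(a)$ but $w\neq\dot c_v(a)$, then $\inf\{d_g(c_v(s),c_w(t)):s\geq a, t\geq\delta\}>0$ for all $\delta>0$. These two facts directly contradict each other unless $w=\dot c_v(a)$, and uniqueness of geodesics gives $c_w(t)=c_v(t+a)$. No second intersection point is ever produced.

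The gap in your argument is the step where you claim that the $\liminf$-closeness from the Main Theorem, together with the Morse Lemma and a Jordan-curve/intermediate-value argument, forces the two rays to \emph{actually intersect again} at some $t_0,s_0>0$. That implication is false: two curves emanating from a common point and tending to the same boundary point $\del$ can perfectly well separate and then approach each other asymptotically on one side, with $\liminf d_g\to 0$, without ever meeting again (think of two graphs $y=\pm x/(1+x^2)$ in a half-plane model). Nothing topological rules this out; the ``pinching at both ends'' picture is consistent with a lens-shaped region whose two boundary arcs meet only at the two endpoints. Your attempt to patch this by discussing tangential \emph{touching} and distinct $1$-jets only addresses the case where the rays actually meet; it does not touch the real problem, which is asymptotic approach without contact. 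What actually excludes that scenario is a metric (minimizing) argument, not a topological one, and that is precisely the content of Lemma~\ref{crossing minimals}: its proof (a classical corner-cutting/triangle-inequality argument, given in the cited Lemma~2.20 of \cite{paper1_diss}) shows that a common initial point with a genuine angle is incompatible with the two rays ever coming close again, \emph{regardless} of whether they intersect. Once you replace your intersection step by an invocation of Lemma~\ref{crossing minimals}, the rest of your reasoning collapses to the paper's proof.
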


Hence two rays with a common asymptotic direction not in $\Fix(\Gam)$ can intersect at most in a common initial point, which is analogous to V. Bangert's result in the case of the 2-torus.

One of our original motivations to prove Corollary \ref{cor no intersections} was to obtain the following theorem. We write $\pr(\M)$ for the projection of the set $\M = \bigcup\{\M(\g) : \g\in \G \}$ of all minimal geodesics into the $F$-unit tangent bundle of $M$, $B_F(x,r)\subset X$ is the $F$-ball of radius $r$, centered at any $x$ in the universal cover $X$. Let us denote the geodesic flow of $F$ by $\phi_F^t:SX\to SX$.

\begin{thm}[cf. \cite{GKOS}]\label{thm vol-entropy}
 If $M$ is a closed surface and $g$ is any Riemannian metric on $M$, then
 \[ \h(\phi_g^t|_{\pr(\M)}) = h(g) := \lim_{r \to \infty }\frac{1}{r}\log \vol_g B_g(x,r) . \]
\end{thm}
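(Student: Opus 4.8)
The plan is to pinch $(\pr(\M),\phi_g^t)$ between two copies of a reparametrised hyperbolic geodesic flow of topological entropy $h(g)$, using Morse's tube correspondence. First reduce to $\mathrm{genus}(M)\ge 2$: for $M=\s$ the compact universal cover forces $\M=\emptyset$ and $h(g)=0$, so both sides vanish; for $M=\T$ one has $h(g)=0$ and, by Hedlund's structure theory recalled in the introduction, the slope is a continuous flow-invariant surjection $\pr(\M)\to S^1$ whose fibres carry no Bowen fibre-entropy (minimal geodesics of a fixed slope being periodic or heteroclinic for rational slopes, an ordered non-crossing family for irrational ones), so Bowen's inequality for factor maps gives $\h=0$.

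So let $\mathrm{genus}(M)\ge 2$ and fix a hyperbolic metric $g_0$ on $M$, so that $(X,d_g)$ is $\delta$-hyperbolic with boundary $S^1$. By Theorem \ref{morse lemma intro} (equivalently the coarse Morse Lemma) every $g$-minimal geodesic lies in the $D$-tube of a unique $g_0$-geodesic, and $\M(\g)\ne\emptyset$ for all $\g\in\G$. I would use this to define a continuous surjection $\Pi:\pr(\M)\to S_{g_0}M$ sending $\dot c(t)$ to the $g_0$-unit vector at the (unique) point of the shadowing $g_0$-geodesic nearest $c(t)$; it intertwines $\phi_g^t$ with the $g_0$-geodesic flow $\psi^t$ reparametrised by the Hölder cocycle recording $g$-arclength of $c$ against $g_0$-arclength of its shadow (this cocycle is orbit-invariant on periodic orbits, hence, after a coboundary adjustment, descends to $S_{g_0}M$). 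The point is that $\h(\psi^t)=h(g)$: since $\psi$ has, like a nonpositively curved geodesic flow, essentially one orbit direction per footpoint — the set reached from a point in time $T$ has footpoints exactly filling $B_g(x_0,T+O(D))$ — Manning's equality applies and the rate that emerges is $\lim_{T}\frac1T\log\vol_{g_0}B_g(x_0,T)=h(g)$.

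Granting this, the lower bound is immediate, since $\Pi$ is a topological factor map onto a flow of entropy $h(g)$: $\h(\phi_g^t|_{\pr(\M)})\ge h(g)$. For the upper bound apply Bowen's inequality to $\Pi$, $\h(\phi_g^t|_{\pr(\M)})\le h(g)+\sup_z\h(\phi_g,\Pi^{-1}(z))$, and note that the fibre-entropy term vanishes. A fibre $\Pi^{-1}(z)$ is the set of minimal geodesics shadowing one fixed $g_0$-geodesic $\g$ at one fixed position; these are pairwise non-crossing — by Corollary \ref{cor no intersections} at any interior intersection when $\g(\infty)\notin\Fix(\Gamma)$, and by Theorem \ref{morse periodic} (finitely many periodic ones modulo the deck transformation, the rest heteroclinic) when $\g(\infty)\in\Fix(\Gamma)$ — hence form a linearly ordered, laminar family trapped in a tube of width $\le 2D$ on which $\phi_g$ acts as an order-preserving ``translation'', and such a family has zero Bowen fibre-entropy. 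Therefore $\h(\phi_g^t|_{\pr(\M)})=h(g)$.

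Two points carry the real weight. The first is making $\Pi$ and its time-change precise: continuity of the Morse correspondence $c\mapsto$ shadowing $g_0$-geodesic on $\pr(\M)$, the Hölder regularity and the coboundary adjustment needed for the reparametrisation to live on $S_{g_0}M$, and Manning's volume computation for the reparametrised flow, where one must verify that the rate which emerges is exactly the volume growth $h(g)$ and not the a priori larger topological entropy $\h(\phi_g)$ — the latter corresponds to counting all, not just minimal, geodesics between two points. The second is the vanishing of the Bowen fibre-entropy, which is precisely where the Main Theorem of this paper is indispensable: without the non-crossing conclusion of Corollary \ref{cor no intersections}, minimal geodesics sharing an asymptotic direction could braid inside the tube and a fibre could carry positive entropy, destroying the upper bound.
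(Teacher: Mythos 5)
The paper does \emph{not} prove this theorem: it is quoted from \cite{GKOS}, and the text explicitly states that it ``was proved by slightly different means'' than the non-crossing Corollary~\ref{cor no intersections} of the present paper (which \cite{GKOS} predates). What this paper actually contributes is the \emph{strengthening} Corollary~\ref{cor local entropy} (zero entropy on each individual $\M(\g)$), derived by combining Corollary~\ref{cor no intersections} with arguments from Lemma~4.5 of \cite{GKOS}. So your proposal is not a reconstruction of the paper's proof but an independent route, and one of your central claims is historically and logically off: you assert that Main Theorem~\ref{main thm} is ``indispensable'' for the vanishing of the fibre entropy, but since \cite{GKOS} obtained the theorem without it, a weaker input must already suffice. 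Indeed, inside a Morse tube of width $2D$ one only needs that pairs of rays cannot intersect twice (Lemma~\ref{crossing minimals}, which is classical and much weaker than Corollary~\ref{cor no intersections}); ordered families with at most one crossing per pair trapped in a bounded-width tube already carry no entropy. What Corollary~\ref{cor no intersections} buys is the \emph{stronger} structural statement of Corollary~\ref{cor local entropy}, not the theorem itself.

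Beyond that framing issue, the technical heart of your construction is left unjustified at exactly the points you flag. The shadowing map $\Pi:\pr(\M)\to S_{g_0}M$ is only defined up to bounded ambiguity and its continuity (let alone Hölder regularity) is nowhere established; the time-change cocycle depends a priori on which minimal geodesic in a fibre is used, and the ``coboundary adjustment'' that is meant to make it descend to $S_{g_0}M$ is a Liv\v{s}ic-type step that requires a Hölder cocycle, which you have not produced. Likewise ``Manning's equality applies'' to the reparametrised $g_0$-flow is an assertion, not an argument: Manning's theorem is about geodesic flows of metrics without conjugate points, and a time-change of $\phi_{g_0}^t$ is not such a flow — you would need to redo the volume-counting for the reparametrised flow directly, which is essentially the content of the \cite{GKOS} proof and renders the factor-map scaffolding redundant. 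A cleaner presentation in the spirit of \cite{GKOS} is: the lower bound is Theorem~9.6.7 of \cite{KH} as the paper notes; for the upper bound, a $(T,\epsilon)$-separated set of minimal orbit segments is partitioned according to which $g_0$-geodesic segment it $D$-shadows (the Morse Lemma), the number of shadow classes grows like $\vol_g B_g(x_0,T+O(D))\sim e^{h(g)T}$, and within each class the no-successive-intersections lemma bounds the cardinality by a constant. Your low-genus reductions are fine in outline, and your non-crossing analysis of the fibres (Corollary~\ref{cor no intersections} for $\del\notin\Fix(\Gam)$, Theorem~\ref{morse periodic} plus Lemma~\ref{crossing minimals} for $\del\in\Fix(\Gam)$) is essentially Corollary~\ref{cor local entropy} — correct, but as said, stronger than what the theorem itself needs.
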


Here, $\h(\phi_g^t|_{\pr(\M)})$ denotes the topological entropy of the restricted geodesic flow $\phi_g^t:\pr(\M)\to\pr(\M)$; $h(g)$ is often called volume growth of $(M,g)$. Note that, for general closed Riemannian manifolds, one has the following estimate, cf. Theorem 9.6.7 in \cite{KH}:
\[ \h(\phi_g^t|_{\pr(\M)}) \geq h(g) . \]
Although Theorem \ref{thm vol-entropy} was proved by slightly different means, one can derive the following strengthening of Theorem \ref{thm vol-entropy}, using Corollary \ref{cor no intersections}.

\begin{cor}\label{cor local entropy}
 For all $\g\in \G$, the topological entropy of $\phi_F^t:\M(\g)\to\M(\g)$ vanishes, i.e. $\h(\phi_F^t|_{\M(\g)}) = 0$.
\end{cor}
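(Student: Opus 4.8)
The plan is to use that, once the accompanying $g$-geodesic $\g$ is fixed, the minimal geodesics in $\M(\g)$ all stay inside Morse's tube of finite width $D$ around $\g$ (Theorem \ref{morse lemma intro}) and are organised, transversally to the flow direction, in a one-dimensional, non-recurrent fashion; this forces the topological entropy to vanish. I would distinguish whether $\g(\infty)$ (equivalently $\g(-\infty)$) lies in $\Fix(\Gam)$.

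\emph{Case 1: $\g(\infty)\notin\Fix(\Gam)$} (the case $\g(-\infty)\notin\Fix(\Gam)$ being symmetric, working with backward rays). Any forward subray $t\mapsto c(t_0+t)$ of a $c\in\M(\g)$ is an $F$-ray with endpoint $\del:=\g(\infty)$, so Corollary \ref{cor no intersections} shows that distinct $c,c'\in\M(\g)$ are disjoint in $X$: if $c(t_0)=c'(t_1)$, applying the corollary to the forward rays based there forces $c$ and $c'$ to coincide. With Theorem \ref{morse lemma intro} this presents $\M(\g)$ as a closed family of pairwise disjoint, properly embedded lines in a topological strip, all with the common endpoint pair $\g(\pm\infty)$. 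Now I would bring in the Finsler Busemann function $b=b^F_\del$: it is continuous on $X$ and, because every forward subray of a $c\in\M(\g)$ is an $F$-ray to $\del$, it decays exactly linearly along all of $\M(\g)$, i.e. $b(c(t))=b(c(0))-t$ for all $t\in\R$. Hence each level set $\{b=r\}$ meets every orbit of $\phi_F^t|_{\M(\g)}$ exactly once and transversally, and with $\widehat\Sigma:=\M(\g)\cap b^{-1}(0)$ the map $v\mapsto(\phi_F^{\,b(c_v(0))}v,\ b(c_v(0)))$ is a flow conjugacy
\[ \bigl(\M(\g),\phi_F^t\bigr)\ \cong\ \bigl(\widehat\Sigma\times\R,\ (\zeta,\theta)\mapsto(\zeta,\theta-t)\bigr). \]
Since $\M(\g)$ lies in the $D$-tube around $\g$ and $b^{-1}(0)$ meets that tube in a bounded set, $\widehat\Sigma$ is compact, and a translation flow over a compact base has zero topological entropy --- in a $(T,\e)$-separated count the flow-invariant $\widehat\Sigma$-coordinate contributes a bound independent of $T$, the $\R$-coordinate only a linear factor. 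Thus $\h(\phi_F^t|_{\M(\g)})=0$.

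\emph{Case 2: $\g(\infty),\g(-\infty)\in\Fix(\Gam)$.} If $\g$ is the axis of some $\tau$, Theorem \ref{morse periodic} exhibits $\M(\g)$ as the union of the $\tau$-invariant family $\M_{per}(\g)$ of periodic minimal geodesics and of minimal geodesics heteroclinic between neighbouring members of $\M_{per}(\g)$; if $\g$ is not an axis, applying Theorem \ref{morse periodic} to the axes through $\g(\infty)$ and through $\g(-\infty)$ shows that every $c\in\M(\g)$ is asymptotic near $+\infty$ and near $-\infty$ to periodic minimal geodesics of those axes, while $\M(\g)$ itself contains no periodic orbit. In either situation the non-wandering set of $\phi_F^t|_{\M(\g)}$ consists of periodic orbits only (possibly none). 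Since the topological entropy of a flow is carried by its non-wandering set (every invariant measure is supported on the recurrent set, and $\h=\sup_\mu h_\mu$ by the variational principle; one first passes to the compact $\langle\tau\rangle$-quotient in $SM$, which leaves $\h$ unchanged), and a flow-invariant union of periodic orbits has zero entropy --- again by the transversal argument of Case 1, the return map to a section being the identity --- we get $\h(\phi_F^t|_{\M(\g)})=0$.

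\emph{Main obstacle.} The technical heart is the global-section construction in Case 1: that the Finsler Busemann function $b^F_\del$ of a direction $\del\notin\Fix(\Gam)$ is well defined and decays exactly linearly along all of $\M(\g)$, equivalently that any two minimal geodesics in $\M(\g)$ are strongly asymptotic at $\del$. I expect this to require combining Morse's tube estimate with Main Theorem \ref{main thm} and Corollary \ref{cor no intersections} --- the non-crossing keeps two curves that are $d_g$-close at some large time from drifting apart again --- or importing it from \cite{paper1}; note that it genuinely fails when $\del\in\Fix(\Gam)$ (there $b^F_\del$ is multivalued), which is exactly why Case 2 is treated separately via Theorem \ref{morse periodic}. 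A secondary, bookkeeping difficulty is the non-compactness of $\M(\g)$, whose orbits are properly embedded lines running off to $S^1$; it is handled either by the explicit product description or by descending to the compact quotients inside $SM$.
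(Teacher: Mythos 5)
Your Case 1 is the right idea: for $\del=\g(\infty)\notin\Fix(\Gam)$, Corollary \ref{cor unique busemann} gives $\RR_+(\del)=\J_+(u)$ for a single horofunction $u$, so $u\circ\pi$ restricted to $\M(\g)$ is an \emph{exact} time function ($u\circ c_v(t)=u\circ c_v(0)+t$ for every $v\in\M(\g)$), and the Morse Lemma together with the $c_F$-Lipschitz property of $u$ makes $\widehat\Sigma=\M(\g)\cap\pi^{-1}(u^{-1}(0))$ compact; a translation flow over a compact base has zero Bowen entropy. (Be a little careful about quoting a ``flow conjugacy'': Bowen entropy on a non-compact space is not a topological invariant, so one should either run the $(T,\e)$-separated-set count directly in $\M(\g)$ or check that the product chart is uniformly bi-continuous on the relevant compacta. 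The direct count works because the $u$-level of $\phi^t_Fv$ is pinned to $u(\pi v)+t$, so orbit segments stay within a uniformly bounded $g$-distance of each other.)

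Case 2 has a genuine gap. You argue that the non-wandering set of $\phi_F^t|_{\M(\g)}$ is a (possibly empty) union of periodic orbits and conclude $\h=0$ from ``entropy lives on the non-wandering set''. But $\M(\g)\subset SX$ carries \emph{no} recurrence at all (every orbit escapes to $S^1$), so the non-wandering set in $SX$ is empty, and the principle ``$\Omega(\phi)=\emptyset\Rightarrow\h(\phi)=0$'' is simply false on non-compact spaces --- e.g.\ the linear flow $(x,y)\mapsto(e^tx,e^{-t}y)$ on $\{xy\neq0\}$ has empty non-wandering set and positive Bowen entropy. Your proposed remedy, passing to a compact $\langle\tau\rangle$-quotient, only applies when $\g$ is itself an axis of some $\tau$. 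But ``$\g(\pm\infty)\in\Fix(\Gam)$'' does \emph{not} force $\g$ to be an axis: $\g(\infty)$ may be fixed by $\tau_1$ and $\g(-\infty)$ by a non-conjugate $\tau_2$, in which case $\M(\g)$ is $\sigma$-invariant for no $\sigma\in\Gam-\{\id\}$ and there is no compact quotient to pass to. Even in the axis sub-case, the claim that the $\Z$-cover $\M(\g)\to\M(\g)/\langle\tau\rangle$ leaves the entropy unchanged needs a word of justification. A cleaner route that avoids the case split entirely is to take \emph{any} $u\in\H_+(\g(\infty))$ and invoke Lemma \ref{int dH = 1}: for every $v\in\M(\g)$ the function $t\mapsto t-u\circ c_v(t)$ is bounded and monotone, and the bound is uniform on compacta by the Morse Lemma. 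Thus $u\circ\pi$ is a coarse time function on all of $\M(\g)$, regardless of whether $\del\in\Fix(\Gam)$, and the same product-type $(T,\e)$-count as in Case 1 bounds separated sets polynomially. This is in the spirit of the cited Lemma 4.5 of \cite{GKOS} and matches the paper's hint that the argument is essentially uniform in $\g$.
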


To describe the structure of $\RR_+(\del)$ further, write
\[ d_F(x,y) := \inf\{\textstyle \int_0^1 F(\dot c) dt ~|~ c:[0,1]\to X ~ C^1, c(0)=x,c(1)=y \} \]
for the $F$-distance between $x,y\in X$. For a sequence $\{x_n\}\subset X$ and $\del\in S^1$ we shall write $x_n\to\del$, if this is true in the euclidean metric in $\C\supset X \cup S^1$. Fixing any ``origin'' $o\in X$, the set of {\em horofunctions $\H_+(\del)$ of direction $\del$} is the set of all possible $C_{loc}^0$ limit functions $u:X\to \R$ of sequences
\[ u_n(x) = d_F(o,x_n)-d_F(x,x_n), \qquad \text{ where }x_n \to \del. \]

\begin{cor}\label{cor unique busemann}
 If $\del\in S^1-\Fix(\Gam)$, then the set $\H_+(\del)$ consists of precisely one horofunction. Hence, for all sequences $x_n\to\del$, the sequence of functions $d_F(o,x_n)-d_F(.,x_n)$ converges in $C_{loc}^0$ to the unique function $u\in \H_+(\del)$.
\end{cor}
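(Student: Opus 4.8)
The plan is to reduce the statement to a uniqueness property of the Busemann functions attached to the rays in $\RR_+(\del)$, and to extract that property from Main Theorem~\ref{main thm}. Concretely, I would show that every horofunction in $\H_+(\del)$ coincides, after normalization at $o$, with the Busemann function of one fixed ray $c_0\in\RR_+(\del)$; the crucial input is that the Busemann functions of \emph{all} rays in $\RR_+(\del)$ agree up to additive constants, which is precisely what the Main Theorem delivers.

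First I would record three facts valid for an arbitrary Finsler metric, with no restriction on $\del$. (i)~Every $u\in\H_+(\del)$ satisfies $u(y)-u(x)\le d_F(x,y)$, obtained by passing to the limit in $u_n(y)-u_n(x)=d_F(x,x_n)-d_F(y,x_n)\le d_F(x,y)$. (ii)~Through each $p\in X$ there is a \emph{co-ray} of $u$, i.e.\ a ray $c_p\in\RR_+(\del)$ with $c_p(0)=p$ and $u(c_p(t))=u(p)+t$ for all $t\ge0$; one produces $c_p$ as a locally uniform limit (Arzel\`a--Ascoli) of $F$-minimizers $\g_n$ from $p$ to $x_n$ parametrized by arclength, notes that the identity $u_n(\g_n(t))=u_n(p)+t$ survives in the limit, and invokes Theorem~\ref{morse lemma intro} to conclude $c_p(\infty)=\del$, since each $\g_n$ stays $D$-close to the $g$-geodesic from $p$ to $x_n$ and these $g$-geodesics converge to the $g$-ray from $p$ to $\del$. (iii)~For every ray $c\in\RR_+(\del)$ the limit $b_c(z):=\lim_{t\to\infty}\bigl(t-d_F(z,c(t))\bigr)$ exists, because $t\mapsto d_F(z,c(t))-t$ is non-increasing (minimality of $c$) and bounded below by $-d_F(c(0),z)$; moreover $b_c-b_c(o)\in\H_+(\del)$ (take $x_n=c(n)$), and $b_c(c(0))=0$.

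The heart of the matter, and the only place the Main Theorem is used, is the assertion that \emph{for any two rays $c_1,c_2\in\RR_+(\del)$ the function $b_{c_1}-b_{c_2}$ is constant}. I would prove it by applying the Main Theorem to obtain $t_n\to\infty$ and points $c_1(s_n)$ with $d_g(c_1(s_n),c_2(t_n))\to0$; then $c_1(s_n)\to\del$ (being close to $c_2(t_n)\to\del$), which forces $s_n\to\infty$. Since $F$ and $g$ are uniformly equivalent (both $\Gamma$-invariant, hence lifts from the compact $M$), also $d_F(c_1(s_n),c_2(t_n))\to0$, and therefore $d_F(z,c_1(s_n))-d_F(z,c_2(t_n))\to0$ for every $z\in X$ by the triangle inequality. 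Evaluating the limits from (iii) along the sequences $(s_n)$ and $(t_n)$ then yields
\[ b_{c_1}(z)-b_{c_2}(z)=\lim_{n\to\infty}\bigl(s_n-t_n\bigr) \qquad\text{for all } z\in X, \]
a quantity independent of $z$, which is the assertion.

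Granting it, I would fix a reference ray $c_0\in\RR_+(\del)$. Then $b_c=b_{c_0}-b_{c_0}(c(0))$ for every ray $c\in\RR_+(\del)$, so in particular $b_{c_p}(q)=b_{c_0}(q)-b_{c_0}(p)$ whenever $c_p$ is a co-ray of some $u\in\H_+(\del)$ from $p$. For such a $u$ and arbitrary $p,q\in X$, applying (i) along co-rays of $u$ from $p$ and from $q$ gives $u(q)\ge u(p)+b_{c_p}(q)$ and $u(p)\ge u(q)+b_{c_q}(p)$; adding these and using $b_{c_p}(q)+b_{c_q}(p)=\bigl(b_{c_0}(q)-b_{c_0}(p)\bigr)+\bigl(b_{c_0}(p)-b_{c_0}(q)\bigr)=0$ forces both inequalities to be equalities, so $u-b_{c_0}$ is constant. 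The normalization $u(o)=0$ then fixes this constant, giving $u=b_{c_0}-b_{c_0}(o)$ for \emph{every} $u\in\H_+(\del)$; hence $\H_+(\del)$ is a single horofunction. The final assertion follows at once: the functions $d_F(o,x_n)-d_F(\,\cdot\,,x_n)$ are uniformly $d_g$-Lipschitz, hence precompact in $C_{loc}^0$, and every subsequential limit lies in $\H_+(\del)$ and so equals $b_{c_0}-b_{c_0}(o)$. I expect the main obstacle to be the displayed assertion about $b_{c_1}-b_{c_2}$, i.e.\ converting the purely metric ``coming close infinitely often'' of the Main Theorem into an equality (up to constants) of Busemann functions; the point that makes it routine is the monotonicity in (iii), which guarantees that the Busemann limits exist and may be computed along \emph{any} sequence of parameters tending to $+\infty$. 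A secondary technical point is fact (ii), where Theorem~\ref{morse lemma intro} is essential to ensure the co-rays have asymptotic direction exactly $\del$ rather than merely lying in a bounded tube.
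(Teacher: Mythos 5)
Your proof is correct, and it takes a genuinely different and more elementary route than the paper's. The paper's two-line argument feeds Corollary~\ref{cor no intersections} into the $\J_+$-machinery: one shows $\RR_+(\del) = \J_+(u)$ for every $u \in \H_+(\del)$ (a ray in $\RR_+(\del)$ must, after an arbitrarily small time-shift, coincide with the co-ray of $u$ through the same point, and then one uses closedness of $\J_+(u)$), after which uniqueness follows from Corollary~\ref{J=J' then u=u'}, which in turn rests on Rademacher's Theorem and the Legendre-transform description of $\grad_F u$ in Lemma~\ref{lemma fathi}. You bypass all of the differentiability machinery and argue purely metrically with the Busemann limits $b_c(z) = \lim_{t\to\infty}\bigl(t - d_F(z,c(t))\bigr)$: the monotonicity of $t \mapsto t - d_F(z, c(t))$ lets you evaluate the limit along the sequences supplied directly by Main Theorem~\ref{main thm}, whence $b_{c_1} - b_{c_2}$ is constant for any two rays $c_1, c_2 \in \RR_+(\del)$, and then the ``add the two sub-calibration inequalities'' trick identifies every $u \in \H_+(\del)$ with a translate of one fixed $b_{c_0}$. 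Your route is closer to classical Busemann-function arguments on Hadamard or Gromov-hyperbolic spaces and avoids Rademacher and the Legendre transform entirely; the paper's route is shorter once Lemma~\ref{lemma fathi} and Corollary~\ref{J=J' then u=u'} are in hand. One minor reading note: in extracting $s_n$ from the Main Theorem, read $c_1(\R)$ as the ray $c_1([0,\infty))$, which is what the author intends (compare the definition of ``fully separated''); then $s_n \geq 0$ is automatic and your deduction ``$c_1(s_n)\to\del$ forces $s_n\to\infty$'' is watertight.
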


One can describe the set $\RR_+(\del)$ in terms of the horofunctions in $\H_+(\del)$. From Corollary \ref{cor unique busemann}, we obtain the following strengthening of Corollary \ref{cor no intersections}.

\begin{cor}\label{R Lipschitz graph}
 If $\del\in S^1-\Fix(\Gam)$, then for all $\e>0$ the set $\phi_F^\e(\RR_+(\del))\subset SX$ is locally a Lipschitz graph over its projection in $X$.
\end{cor}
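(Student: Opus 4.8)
The plan is to combine the uniqueness of the horofunction (Corollary~\ref{cor unique busemann}) with the classical regularity of Busemann-type functions along their calibrating rays. Fix $\del\in S^1-\Fix(\Gam)$, let $u\in\H_+(\del)$ be the unique horofunction, and write $\pi:SX\to X$ for the footpoint projection. Two elementary facts drive the proof. First, $u$ is $1$-Lipschitz in the (possibly non-symmetric) Finsler sense, $u(y)-u(x)\le d_F(x,y)$ for all $x,y$, by the triangle inequality applied to the defining sequences $u_n(x)=d_F(o,x_n)-d_F(x,x_n)$. Second, every ray $c_v$ with $v\in\RR_+(\del)$ calibrates $u$:
\[ u(c_v(t))-u(c_v(s))=t-s=d_F(c_v(s),c_v(t))\qquad(0\le s\le t). \]
Indeed $c_v(s_n)\to\del$ forces $u_n\to u$ by Corollary~\ref{cor unique busemann}, while $u_n(c_v(t))-u_n(c_v(0))=t$ for $s_n\ge t$; the last equality holds because $c_v$ is a minimal unit-speed geodesic.

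The key step is to show that $d_xu$ exists for every $x$ in
\[ N:=\{c_v(t):v\in\RR_+(\del),\ t\ge\e\}\ \supseteq\ \pi(\phi_F^\e(\RR_+(\del))), \]
and that $x\mapsto d_xu$ is Lipschitz on $N$, with a constant depending only on $\e$, $F$, $g$. Given $x=c_v(t)\in N$, put $p^\pm:=c_v(t\pm\e)$; then $x$ lies in the interior of the minimal segment $c_v|_{[t-\e,t+\e]}$, and the two facts above give, in a neighbourhood of $x$,
\[ u(p^-)+d_F(p^-,\cdot)\ \ge\ u\ \ge\ u(p^+)-d_F(\cdot,p^+), \]
with equality at $x$ in both estimates. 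Distance functions are locally semiconcave away from their base point (for the right-hand barrier one passes to the reversed Finsler metric), so $u$ is squeezed near $x$ between a semiconcave and a semiconvex function touching it at $x$; moreover, since $d_F(p^-,x)=d_F(x,p^+)=\e$ and $M$ is compact, the semiconcavity/semiconvexity constants of the two barriers, and the radius on which they are valid, are bounded by quantities $K(\e)$, $r(\e)>0$ independent of $v,t$. A standard argument comparing the super- and subdifferentials of two touching barriers then yields that $u$ is differentiable at $x$ with a quadratic Taylor remainder bounded by $C(\e)\,d_g(x,\cdot)^2$ on a ball of radius $r(\e)$; applying this at nearby points of $N$ shows that $x\mapsto d_xu$ is Lipschitz on $N$, with a constant depending only on $\e$, $F$, $g$.

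To conclude, recall the classical fact that a calibrated minimizer realises the differential of the calibrated function at its interior points: $d_xu=\ell(\dot c_v(t))$ for $x=c_v(t)\in N$, where $\ell$ denotes the Legendre transform of $F$. Hence $V:N\to SX$, $V(x):=\ell^{-1}(d_xu)$, is a well-defined Lipschitz section of $\pi$ over $N$, and $V(c_v(t))=\dot c_v(t)$; in particular two rays of $\RR_+(\del)$ that meet at a point of $N$ have the same velocity there. Extending $V$ to a Lipschitz section over a neighbourhood of $N$ in $X$, we see that $\phi_F^\e(\RR_+(\del))=\{V(c_v(\e)):v\in\RR_+(\del)\}$ is contained in the graph of a Lipschitz section of $\pi$, hence is locally a Lipschitz graph over its projection; injectivity of $\pi$ on this set also recovers Corollary~\ref{cor no intersections}.

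The main obstacle I expect is the uniform $C^{1,1}$ bound for $u$ on $N$: one must prevent the constants of the distance-function barriers from degenerating as $v$ and $t$ vary, which is exactly where the hypothesis $t\ge\e$ — equivalently, flowing for a positive time — and the compactness of $M$ are indispensable; for $\e=0$ the backward barrier has unbounded Hessian at the footpoint, consistently with the graph property failing without flowing. A secondary, routine point is the precise formulation of the lemma that a function squeezed between a semiconcave and a semiconvex barrier is $C^{1,1}$, when the barriers themselves are only semiconcave/semiconvex.
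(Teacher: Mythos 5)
Your proposal is correct and takes essentially the same route as the paper: the paper's proof simply combines Corollary~\ref{cor unique busemann} (which gives $\RR_+(\del)=\J_+(u)$ for the unique horofunction $u$) with Lemma~\ref{lemma fathi} (which asserts that $\phi_F^\e(\J_+(u))$ is locally a Lipschitz graph for $\e>0$). The semiconcave/semiconvex barrier argument you spell out is precisely the standard proof of Lemma~\ref{lemma fathi}, which the paper cites from \cite{paper1} rather than reproving inline.
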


Next, let us observe that most sets $\M(\g)$ are not very big.

\begin{cor}\label{cor unique min geod}
 For all $\del\in S^1$, there exists a countable set $A\subset S^1$, such that $\M(\g)$ consists of only one minimal geodesic for all $\g\in \G$ connecting a point in $S^1-A$ to $\del$.
\end{cor}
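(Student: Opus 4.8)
The plan is to fix $\delta \in S^1 - \Fix(\Gam)$ and exploit Corollary \ref{cor unique busemann}: the horofunction set $\H_+(\del)$ is a singleton $\{u\}$. I would first recall the standard relationship between $u$ and the rays in $\RR_+(\del)$: every ray $c_v$, $v \in \RR_+(\del)$, is a gradient-type curve of $-u$ (i.e. $u(c_v(t)) - u(c_v(s)) = -(t-s)$ for $s < t$, so $u$ decreases at unit speed along $c_v$), and conversely the $F$-gradient flow of $u$ generates rays in $\RR_+(\del)$. Now I want to show that for each $\del' \in S^1 - A$, the minimal geodesics in $\M(\g)$, where $\g$ joins $\del'$ to $\del$, are forced to coincide. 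The idea is that a minimal geodesic in $\M(\g)$ is in particular a ray toward $\del$, hence an integral curve of the $u$-gradient flow; if two such minimal geodesics $c_v, c_w \in \M(\g)$ were distinct, then by Corollary \ref{cor no intersections} they are disjoint (they cannot cross, and they cannot share an initial point since they are bi-infinite), so they bound a region, and on that region $u$ would have to behave like a ``flat strip'' foliated by $u$-gradient rays all also going to $\del'$ in the backward direction — which, by symmetry, forces $\del'$ into a countable exceptional set.

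More concretely, here is the route I would take. Parametrize: for the backward direction, each minimal geodesic $c_v \in \M(\g)$ also defines a ray $t \mapsto c_v(-t)$ with endpoint $\del' = \g(-\infty)$, so $c_v(-\infty) = \del'$. Consider the countable set $\Fix(\Gam) \subset S^1$ and put it inside $A$ to begin with; for $\del' \notin \Fix(\Gam)$ we may apply Corollary \ref{cor no intersections} in the backward direction as well. Suppose $c_v \ne c_w$ in $\M(\g)$. They are disjoint bi-infinite minimal geodesics, both asymptotic to $\g$ (so within Morse distance $D$ of $\g$), and they bound a strip-like region $S$ in $X$. Every point of $S$ lies on a minimal geodesic from $\M(\g)$ (this uses a compactness/limit argument for minimal geodesics between the two, standard Morse theory as in Theorem \ref{morse lemma intro}), so $S$ is foliated by minimal geodesics of $\M(\g)$. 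Then $u$ restricted to $S$ is affine along each leaf, and the leaves all terminate (backward) at $\del'$; I would argue this produces a genuine one-parameter family of distinct horofunctions of direction $\del'$ — contradicting Corollary \ref{cor unique busemann} applied at $\del'$ — unless $\del'$ lies in some exceptional countable set. The cleanest way to get countability: the distinct strips $S$ for varying $\del'$ have disjoint interiors in a suitable sense (distinct $\del'$ give minimal-geodesic families converging to distinct points of $S^1$), and a family of pairwise-essentially-disjoint ``fat'' subsets of $X$ is necessarily countable.

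I expect the main obstacle to be making the countability bookkeeping rigorous — specifically, showing that the set of $\del'$ for which $\M(\g)$ contains more than one geodesic is countable, rather than merely that each individual such $\M(\g)$ is ``small.'' The natural mechanism is: if $\M(\g)$ has two distinct elements, the strip $S_{\del'}$ they bound has nonempty interior, and two such strips $S_{\del'}, S_{\del''}$ for $\del' \ne \del''$ (with the same $\del$) have interiors that can meet only in a set of empty interior — because a point in the common interior would lie on minimal geodesics asymptotic backward to both $\del'$ and $\del''$, forcing these geodesics to cross transversally, which is impossible for two minimal geodesics sharing the forward endpoint $\del$ by Corollary \ref{cor no intersections}. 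Since $X$ is second countable, any collection of open sets with pairwise-disjoint nonempty interiors is countable, giving the desired countable $A$ (after also throwing in $\Fix(\Gam)$). The remaining steps — that $S$ is foliated by $\M(\g)$-geodesics, and that these are $u$-gradient curves — are routine consequences of Morse's theorems and the singleton property of $\H_+(\del)$.
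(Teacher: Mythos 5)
Your overall strategy matches the paper's: the strips bounded by the bounding geodesics $c_\g^0(\R), c_\g^1(\R)$ of $\M(\g)$ have pairwise disjoint interiors, and a collection of pairwise disjoint, nonempty open subsets of the second-countable space $X$ is countable. But the specific justification you give for disjointness has a genuine gap, and the case $\del\in\Fix(\Gam)$ is left unaddressed.

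The gap: you justify disjointness of two strips $S_{\del'}, S_{\del''}$ by saying a common interior point would lie on minimal geodesics asymptotic backward to both $\del'$ and $\del''$, and this rests on the earlier assertion that ``every point of $S$ lies on a minimal geodesic from $\M(\g)$.'' That assertion is false in general. Lemma \ref{bounding geodesics morse} only guarantees that all minimal geodesics of $\M(\g)$ lie \emph{in} the strip, not that they foliate it; $\M(\g)$ could consist of nothing more than the two bounding geodesics, with an empty ``flat strip'' between them. It is also not ``a routine consequence of Morse's theorems.'' The paper's proof bypasses this entirely: it sets $L := \bigcup\{ c_\g^i(\R) : \g(\infty)=\del,\ i=0,1\}$ and observes that $L$ is a closed lamination of $X$ by Corollary \ref{cor no intersections} (all leaves are forward-asymptotic to the same $\del$, so bi-infinite leaves cannot meet). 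The open strips $C_\g$ with $c_\g^0\neq c_\g^1$ are then connected components of $X-L$, and hence automatically pairwise disjoint — there is no need to cover their interiors by minimal geodesics. Second, you fix $\del\in S^1-\Fix(\Gam)$ from the outset, but the Corollary claims the result for every $\del\in S^1$; throwing $\Fix(\Gam)$ into $A$ restricts the allowed backward endpoints $\g(-\infty)$, but does not handle a forward endpoint $\del\in\Fix(\Gam)$. The paper treats this case separately by observing that $L$ is still a lamination, now by Theorem \ref{morse periodic} together with Lemma \ref{crossing minimals} rather than Corollary \ref{cor no intersections}. Finally, the detour through Corollary \ref{cor unique busemann} and gradient flows of horofunctions is not needed for this statement and, as you yourself note, does not by itself produce a contradiction; you can drop it entirely.
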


Given some direction $\del_+\in X$, then any point $x\in X$ determines a unique $\del_-\in S^1$ as follows.

\begin{cor}\label{cor backwards dir}
 If $x\in X$ and $\del\in S^1$, then there exists a unique $\g\in \G$ with $\g(\infty)=\del$, such that $x$ lies in the closed strip between a pair of (not necessarily distinct) minimal geodesics from $\M(\g)$.
\end{cor}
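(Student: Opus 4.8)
The plan is to treat $\del\notin\Fix(\Gam)$ and $\del\in\Fix(\Gam)$ separately. In the first case I would organize the minimal geodesics asymptotic to $\del$ at $+\infty$ by their backward endpoint: write $\g_\eta$ for the $g$-geodesic from $\eta$ to $\del$, where $\eta$ runs through the linearly ordered arc $S^1-\{\del\}$, and reduce the corollary to a soft statement about the monotone family $\eta\mapsto\M(\g_\eta)$.

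The crux is a disjointness statement: \emph{any two distinct minimal geodesics $c_1,c_2$ with $c_1(\infty)=c_2(\infty)=\del\notin\Fix(\Gam)$ are disjoint in $X$.} Here is the argument I would use, granting Corollary \ref{cor no intersections}. Suppose $c_1(s_1)=c_2(s_2)=:p$. Consider $c_1$'s sub-ray starting at $c_1(s_1-\e)$ for some small $\e>0$; this is again a ray in $\RR_+(\del)$ (sub-arcs of minimal geodesics are minimal), and now $p$ is an \emph{interior} point of it, at parameter $\e>0$. The ray determined by $c_2$ at $p$ starts at $p$, so Corollary \ref{cor no intersections} applies with $a=\e$ and forces the two rays to coincide from $p$ onward; since a Finsler geodesic is determined by a point and a unit tangent vector, $c_1=c_2$ --- a contradiction. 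Hence $\M_\del:=\bigcup_{\g(\infty)=\del}\M(\g)$ is a family of pairwise disjoint, properly embedded lines in the disc $X$.

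From disjointness I would deduce monotonicity. Each $c\in\M_\del$ cuts $X$ into a "left" region $L(c)$ and a "right" region $R(c)$ (fix the convention by the position of the ideal arc of $\partial(L(c))$ relative to the backward endpoint of $c$). Near $-\infty$ the curves of $\M(\g_\eta)$ lie in the thin end of the Morse tube around $\g_\eta$, so two such curves with backward endpoints $\eta_1<\eta_2$ are nested there, and --- being disjoint --- globally: $c_1\subseteq L(c_2)$. Setting $R_\g:=\bigcap_{c\in\M(\g)}R(c)$ and $L_\g:=\bigcap_{c\in\M(\g)}L(c)$, this gives $R_{\g_{\eta_2}}\subseteq R_{\g_{\eta_1}}$ and $L_{\g_{\eta_1}}\subseteq L_{\g_{\eta_2}}$ for $\eta_1<\eta_2$. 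Using the Morse tube lemma and the stability of minimal geodesics under $C^0_{loc}$-limits, $\M(\g_\eta)$ is compact and $\eta\mapsto\M(\g_\eta)$ is upper semicontinuous, so the sets $E_-:=\{\eta: x\in R_{\g_\eta}\}$ and $E_+:=\{\eta: x\in L_{\g_\eta}\}$ are open; they are nonempty (for $\eta$ near either end of $S^1-\{\del\}$ the Morse tube of $\g_\eta$ misses $x$, on the appropriate side), disjoint (as $\M(\g_\eta)\neq\emptyset$ and $R(c)\cap L(c)=\emptyset$), and by monotonicity a down-set and an up-set. Thus $E_-=(-\infty,\eta^-)$ and $E_+=(\eta^+,\infty)$ with $\eta^-\le\eta^+$, and for every $\eta\in[\eta^-,\eta^+]$ the point $x$ lies in the closed strip between some $c^-\in\M(\g_\eta)$ with $x\in\overline{R(c^-)}$ and some $c^+\in\M(\g_\eta)$ with $x\in\overline{L(c^+)}$; taking $\eta=\eta^-$ proves existence.

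For uniqueness I must show $\eta^-=\eta^+$. If instead $\eta^-<\eta_1<\eta_2<\eta^+$, choose $c_1^+\in\M(\g_{\eta_1})$ with $x\in\overline{L(c_1^+)}$ and $c_2^-\in\M(\g_{\eta_2})$ with $x\in\overline{R(c_2^-)}$. By disjointness and nesting, $c_1^+\subseteq L(c_2^-)$, hence $\overline{L(c_1^+)}\subseteq\overline{L(c_2^-)}$ and $\overline{R(c_2^-)}\subseteq\overline{R(c_1^+)}$; therefore $x\in\overline{R(c_1^+)}\cap\overline{L(c_1^+)}=c_1^+$ and symmetrically $x\in c_2^-$, contradicting that $c_1^+,c_2^-$ are distinct members of $\M_\del$ and hence disjoint. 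For $\del\in\Fix(\Gam)$ I would run the same scheme using Theorem \ref{morse periodic} in place of Corollary \ref{cor no intersections}: the periodic minimal geodesics in $\M_{per}(\g_0)$ (with $\g_0$ the axis through $\del$) together with the heteroclinic connections between neighboring ones are pairwise disjoint and sweep out a $\tau$-invariant tube, while the $\M(\g_\eta)$ with $\eta\neq\g_0(-\infty)$ fill in the complement monotonically. I expect the main obstacle to be exactly the first step --- upgrading "intersect only in a common initial point" to genuine disjointness --- together with the topological bookkeeping with the regions $L(c),R(c)$ and the semicontinuity of $\M(\g_\eta)$; once these are in place the rest is routine order theory.
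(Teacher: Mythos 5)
Your argument is essentially the paper's, made explicit. The paper simply invokes the lamination $L$ of $X$ by the bounding geodesics $c_\g^i$ (set up in the proof of Corollary~\ref{cor unique min geod}), whose leaves and complementary strips $C_\g$ tile the disc, and reads off existence and uniqueness of $\g$ from that picture; your disjointness lemma plus the monotone/order-theoretic bookkeeping is precisely the content of ``$L$ is a closed lamination whose gaps are the strips $C_\g$''. Two points worth tightening. First, the openness of $E_\pm$ does not follow from upper semicontinuity of $\eta\mapsto\M(\g_\eta)$ alone: a priori the rightmost leaf $c_{\g_\eta}^0$ could jump inward as $\eta\searrow\eta_0$. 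You also need the nesting you established: any sublimit of $\{c_{\g_\eta}^0\}$ as $\eta\searrow\eta_0$ lies in $\M(\g_{\eta_0})$ (upper semicontinuity) \emph{and} in $\overline{R_{\g_{\eta_0}}}$ (nesting), hence equals $c_{\g_{\eta_0}}^0$; with that, openness follows. Second, in the sketch for $\del\in\Fix(\Gam)$, the assertion that the heteroclinics between neighboring periodic minimal geodesics are pairwise disjoint is not correct in general (heteroclinics running in opposite directions in the same gap must cross). Fortunately you never need it: only the \emph{bounding} geodesics $c_\g^i$ need to form a lamination, and that is exactly what the paper extracts from Theorem~\ref{morse periodic} together with Lemma~\ref{crossing minimals} (distinct boundaries are asymptotic to the same outermost periodic orbit and cannot cross without violating Lemma~\ref{crossing minimals}). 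With those two repairs the proposal matches the paper's route.
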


We proved that certain behavior of minimal geodesics (in particular: intersecting minimal geo\-de\-sics) in $\M(\g)$ occurs only for axes $\g\in \G$. As it turns out, the existence of such behavior is very exceptional. To be more precise, let us recall the following special case of the main result from \cite{paper_generic}, writing
\[ E := \{ f:M\to \R ~|~ f \text{ is } C^\infty, ~ f(x)>0 ~ \forall x \in M \} . \]

\begin{thm}[cf. \cite{paper_generic}]\label{thm generic}
 Given any Finsler metric $F$ on the surface $M$, there exists a residual subset $\O_F\subset E$ (i.e. a countable intersection of open and dense sets) in the $C^\infty$ topology, such that for the conformally perturbed Finsler metrics $f\cdot F$ with $f\in \O_F$, in any free homotopy class $M$ there exists precisely one shortest closed geodesic with respect to $f\cdot F$.
\end{thm}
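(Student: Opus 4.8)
The plan is a Baire--category argument in which one and the same conformal perturbation does two jobs: it first makes the $F$-geodesic flow ``bumpy'', so that in each free homotopy class only finitely many closed geodesics compete for being shortest, and it then breaks the resulting finitely many ties. Since $\pi_1(M)$ is countable, $M$ carries only countably many nontrivial free homotopy classes $\al_1,\al_2,\dots$, so it is enough to exhibit, for each fixed $\al$, an open dense subset $\O_\al\subset E$ such that $f\cdot F$ has a unique shortest closed geodesic in class $\al$ for every $f\in\O_\al$; then $\O_F:=\bigcap_n\O_{\al_n}$ is residual, because $E$ is an open subset of the Fr\'echet space $C^\infty(M)$ and hence a Baire space.

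I would let $\O_\al$ be the set of $f\in E$ for which (i) every closed $f\cdot F$-geodesic is nondegenerate and (ii) the shortest closed geodesic in class $\al$ is unique. Openness is the easier half. Bumpiness is a $C^2$-open condition; and given (i), the closed $f\cdot F$-geodesics in class $\al$ of length below a fixed bound form a finite set (a nondegenerate closed geodesic is isolated among the closed geodesics of its free homotopy class, and closed curves of bounded length in a fixed class lie in a $C^0$-compact family), these finitely many geodesics persist and depend continuously on $f$ by the implicit function theorem, no new short ones can appear under a small $C^2$-perturbation, and the function $f\mapsto\min\{\length_{f\cdot F}(c):c\text{ a closed curve in class }\al\}$ is $C^0$-Lipschitz, hence continuous --- so a strict minimum in (ii) stays a strict minimum. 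For density I would first invoke the bumpy metric theorem in the conformal class of $F$ to reduce to an $f$ satisfying (i); if then the shortest closed geodesics in class $\al$ are $c_1,\dots,c_N$ with $N\ge 2$, I note that for $i\ne j$ the $F$-arclength measures $\mu_{c_i},\mu_{c_j}$ on $M$ are distinct --- two distinct closed geodesics cannot share the same image with the same multiplicities, using that a minimizing closed geodesic has only finitely many, transverse, self-intersections and that geodesic continuation at a crossing is deterministic --- so I may pick a small $h\ge 0$ with $\int_M h\,d\mu_{c_1}>\int_M h\,d\mu_{c_2}$ and replace $f$ by $f+\e h$ with $\e>0$ tiny. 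By the envelope formula, $\tfrac{d}{d\e}\big|_{\e=0}\length_{(f+\e h)\cdot F}(c_i^\e)=\int_M h\,d\mu_{c_i}$ for the continuation $c_i^\e$ of $c_i$; hence for small $\e$ the continuation of $c_1$ becomes strictly longer than that of $c_2$ and is no longer shortest, and --- since no new short geodesics appear --- the number of shortest closed geodesics in class $\al$ has dropped. Repeating this at most $N$ times, each perturbation chosen small enough to stay in a prescribed $C^\infty$-neighbourhood of the original metric and to preserve (i), lands in $\O_\al$.

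The genuine obstacle is the bumpy metric theorem in the conformal class: proving that (i) defines a residual, in particular dense, subset of $E$ is a Sard--Smale transversality argument, and the delicate point is to check that multiplying $F$ by positive functions already provides enough freedom to make the linearized closed-geodesic equation surjective at every periodic orbit, including iterates --- the conformal restriction, which is exactly what keeps the unperturbed $F$ and the Morse theory of Theorems~\ref{morse lemma intro}--\ref{morse periodic} intact, is what makes this nontrivial. This is part of what is established in \cite{paper_generic}, which in fact proves a more general statement. The remaining ingredients --- distinctness of the arclength measures of distinct minimizing closed geodesics, compactness of families of bounded-length geodesics in a fixed class, continuity of $f\mapsto\min_{c\in\al}\length_{f\cdot F}(c)$, and the implicit-function-theorem bookkeeping behind openness --- are routine but should be set up with care.
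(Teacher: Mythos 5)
The paper does not prove Theorem~\ref{thm generic}: it is quoted, without proof, as a special case of the main result of \cite{paper_generic}. So there is no in-paper proof to compare your argument against, and what follows is an evaluation of the proposal on its own terms.

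Your Baire-category skeleton is the right shape: $\pi_1(M)$ is finitely generated, so the set of nontrivial free homotopy classes is countable, $E$ is an open subset of the Fr\'echet space $C^\infty(M)$ and hence Baire, and it suffices to produce an open dense set $\O_\al$ for each class. The tie-breaking mechanism (distinctness of the $F$-arclength measures of two distinct shortest closed geodesics in the same class, plus the first-variation/envelope identity $\tfrac{d}{d\e}\big|_{\e=0}\length_{(f+\e h)\cdot F}=\int h\,d\mu_{c}$) is sound, and you are right to single out the conformal bumpy metric theorem for Finsler metrics as the genuine obstacle: making the linearized return map nondegenerate at every iterate using only multiplication of $F$ by positive scalars is a nontrivial transversality statement, and it is fair to say it cannot be waved through.

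That said, the bumpy-metric route is heavier than the statement requires, and this is where I would push back. The theorem asks only that the \emph{shortest} closed geodesic in each class be unique, not that every closed geodesic be nondegenerate. Theorem~\ref{morse periodic} already tells you that the shortest closed geodesics in class $\al$ lift to $\M_{per}(\g)$, a $\tau$-invariant closed family of pairwise non-crossing minimal geodesics in a strip around the axis $\g$; this laminated structure is exactly what Ma\~n\'e-type perturbation arguments in Aubry--Mather theory exploit to get generic uniqueness of minimizers without any Sard--Smale transversality. I would expect \cite{paper_generic} to follow that template rather than to prove a conformal Finsler bumpy theorem, so while your proposal likely \emph{could} be made to work, it probably does not reflect the cited proof, and you are importing a large black box you then have to justify. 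Two further points to tighten: (i) your openness argument currently leans on bumpiness to get finiteness of competitors and persistence; if you drop bumpiness you must argue openness directly from a strict gap in the length spectrum of class $\al$, using upper semicontinuity of the set of minimizers under $C^0$-perturbation of $f$, and this requires a moment's thought (two nearby shortest geodesics would bound a thin $\tau$-invariant strip, which must be excluded). (ii) Non-primitive classes $\al=\beta^k$ deserve a sentence: the shortest closed geodesic in $\al$ may be forced to be a $k$-fold cover, so either show that uniqueness for iterates follows from uniqueness in the primitive class, or run the perturbation argument uniformly over iterates.
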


Hence, for the {\em conformally generic} Finsler metrics, that is Finsler metrics $f\cdot F$ with $f\in \O_F$, the sets $\M_{per}(\g)$ in Theorem \ref{morse periodic} consist of only one minimal geodesic. We obtain the following corollary.

\begin{cor}\label{cor generic}
 If $F$ is conformally generic, then the statements of Main Theorem \ref{main thm} and of Corollaries \ref{cor no intersections}, \ref{cor unique busemann} and \ref{R Lipschitz graph} are true for all $\del\in S^1$.
\end{cor}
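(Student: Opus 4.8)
The plan is to reduce everything to the case $\del\in\Fix(\Gam)$ (the case $\del\notin\Fix(\Gam)$ being exactly what Main Theorem \ref{main thm} and Corollaries \ref{cor no intersections}, \ref{cor unique busemann}, \ref{R Lipschitz graph} already assert) and to exploit that, under conformal genericity, Morse's Theorem \ref{morse periodic} describes $\RR_+(\del)$ completely. So fix $\del\in\Fix(\Gam)$ and let $\g\in\G$ be the axis of some $\tau\in\Gam-\{\id\}$ with $\g(\infty)=\del$. First I would invoke that, $F$ being conformally generic, each free homotopy class of $M$ contains a unique shortest closed $F$-geodesic (Theorem \ref{thm generic}); in particular the class determined by $\tau$ does, so $\M_{per}(\g)$ is a single periodic minimal geodesic $c_0$. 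By the remark following Theorem \ref{morse periodic} this forces $\M(\g)=\M_{per}(\g)=\{c_0\}$, and Theorem \ref{morse periodic} then says that every ray $c_v$, $v\in\RR_+(\del)$, is asymptotic near $+\infty$ to $c_0$.

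From this, Main Theorem \ref{main thm} for $\del$ is immediate: given $v,w\in\RR_+(\del)$, both $c_v$ and $c_w$ are asymptotic near $+\infty$ to $c_0$, hence to each other, so for suitable reparametrizations and constants $a,b\in\R$ one has $d_g(c_v(t+a),c_w(t+b))\to 0$ as $t\to\infty$; in particular $\liminf_{t\to\infty} d_g(c_v(\R),c_w(t))=0$ (indeed the limit is $0$), which is the assertion of Main Theorem \ref{main thm}.

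The remaining — and really the only non-formal — step is to go back through the proofs of Corollaries \ref{cor no intersections}, \ref{cor unique busemann} and \ref{R Lipschitz graph} and check that they use the hypothesis $\del\notin\Fix(\Gam)$ \emph{only} via the conclusion of Main Theorem \ref{main thm}. Concretely: the corner-cutting argument behind Corollary \ref{cor no intersections} (a broken $F$-geodesic formed at a common point of two rays is strictly non-minimizing, contradicting the approach rate supplied by Main Theorem \ref{main thm}), and the ``all rays to $\del$ are mutually asymptotic, become Lipschitz graphs after a short flow time, and project onto all of $X$'' reasoning behind Corollaries \ref{cor unique busemann} and \ref{R Lipschitz graph}, together with the general facts that $\RR_+(\del)$ projects onto $X$, that rays have well-defined endpoints (Theorem \ref{morse lemma intro}), and that horofunctions are $1$-Lipschitz for $d_F$ — none of these single out $\del\notin\Fix(\Gam)$. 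Feeding in the version of Main Theorem \ref{main thm} just established then yields Corollaries \ref{cor no intersections}, \ref{cor unique busemann} and \ref{R Lipschitz graph} for every $\del\in S^1$. I expect this bookkeeping to be the main obstacle: should some earlier argument turn out to use non-fixedness more essentially, one would instead rerun it by hand in the periodic case, now with the explicit structure $\M(\g)=\{c_0\}$ and ``every ray asymptotic to $c_0$'' from Theorem \ref{morse periodic} taking over whatever role $\del\notin\Fix(\Gam)$ had played.
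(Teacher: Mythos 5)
Your proof is correct and follows essentially the same route as the paper's: conformal genericity makes $\M_{per}(\g)$ a singleton by Theorem \ref{thm generic}, Theorem \ref{morse periodic} then gives that all rays in $\RR_+(\del)$ are mutually asymptotic so Main Theorem \ref{main thm} holds for $\del\in\Fix(\Gam)$, and the corollaries follow because their proofs use $\del\notin\Fix(\Gam)$ only through that conclusion. You are merely more explicit than the paper's terse ``and so do its corollaries'' in checking this last bookkeeping step.
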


 Note, that in the case of the 2-torus, the $\Z^2$-invariance of the sets $\M_{per}(\g)$ discussed above leads to a quite different picture for generic Finsler metrics: Here, generic Finsler metrics admit {\em many} horofunctions and intersecting minimal geodesics for {\em every} rational direction $\del\in S^1$.

We close the introduction with two remarks.

\begin{remark}
 Differently from what happens on the torus (recall the examples of Riemannian metrics on $\mathbb{T}^3$ due to G. A. Hedlund \cite{hedlund}), some of our techniques work also in higher dimensions. In particular, Theorem \ref{morse lemma intro} continues to hold \cite{klingenberg}. Here the setting is a closed Riemannian manifold $(M,g)$, such that $g$ has everywhere negative sectional curvatures, and then considering an arbitrary Finsler metric $F$ on $M$. This, however, is a topic for future research.
 
 Another way of generalizing our results is to consider non-compact surfaces. It seems very likely to us that under suitable conditions, our main results and arguments continue to work without much alteration, e.g., for Finsler metrics on complete hyperbolic surfaces of finite volume such as the once-punctured torus.
\end{remark}

\begin{remark}
 Another novelty in this paper, compared to the work of Morse, is the use of Finsler instead of Riemannian metrics. It was observed by E. M. Zaustinsky \cite{zaustinsky}, that the results of Morse carry over to these much more general systems. Moreover, it is known that Finsler metrics can be used to describe the dynamics of arbitrary Tonelli Lagrangian systems in high energy levels, cf. \cite{contreras1}. Let us remark that also in the torus case, all results carry over to general Finsler metrics (hence Tonelli Lagrangian systems), cf. \cite{zaustinsky} and \cite{paper1_diss}.
\end{remark}

{\bf Structure of this paper.} 
This paper continues earlier work \cite{paper1}, where we began to study the dynamics of minimal rays of a Finsler metric on a closed orientable surface of genus at least two. Hence, we refer to \cite{paper1} for some proofs. In Section \ref{section basics} we recall known facts on rays and minimal geodesics in surfaces of higher genus, including some material on horofunctions. Section \ref{section main} is devoted to the proof of Main Theorem \ref{main thm}, with Subsection \ref{subsection cor} containing the proofs of its corollaries.

\section{Basics on minimal geodesics in surfaces of higher genus}\label{section basics}

We write $\pi:TX\to X$ for the canonical projection, $0_X$ denotes the zero section and $T_xX=\pi^{-1}(x)$ the fibers. The norm and distance of $g$ on $X$ are denoted by $|.|_g, d_g$, respectively. Let us recall the definition of a Finsler metric and refer to \cite{BCS} for basics on Finsler geometry. The reader unfamiliar with Finsler geometry can without much loss think of a Finsler metric as the norm coming from some Riemannian metric.

\begin{defn}\label{def finsler}
A function $F:TX\to [0,\infty)$ is a \emph{Finsler metric on $X$}, if the following conditions are satisfied:
\begin{enumerate}
\item (smoothness) $F$ is $C^\infty$ in $TX-0_X$,

\item (positive homogeneity) $F(\lam v) = \lam F(v)$ for all $v\in TX, \lam\geq 0$,

\item (strict convexity) the fiberwise Hessian $\Hess(F^2|_{T_xX})$ of the square $F^2$ is positive definite at every vector $v\in T_xX-\{0\}$, for all $x\in X$.
\end{enumerate}
\end{defn}

We will for the rest of the paper fix the Finsler metric $F$ and assume that $F$ is $\Gam$-invariant, meaning that the $g$-isometries $\tau\in \Gam$ are also $F$-isometries:
\[ F(d\tau(v))=F(v) \qquad \forall v\in TX, \tau\in\Gamma. \]
As a consequence of the compactness of $M$, the Finsler metric $F$ is uniformly equivalent to the norm of $g$, i.e. there exists a constant $c_F>0$, such that
\[ \frac{1}{c_F} \cdot F \leq |.|_g \leq c_F\cdot F . \]

\begin{notation}
 We will often drop the dependence of $F$ of objects that are defined with respect to $F$; e.g. geodesics will refer to $F$-geodesics. We will always denote $g$-geodesics by $\g,\g_n$ etc., while $F$-geodesics will be termed $c,c_n$ etc..
\end{notation}

We write $SX=\{v\in TX : F(v)=1\}$ for the unit tangent bundle of $F$. The geodesic flow $\phi_F^t:SX\to SX$ of $F$ is given by $\phi_F^tv=\dot c_v(t)$, where $c_v:\R\to X$ is the $F$-geodesic with initial velocity $\dot c_v(0)=v$. We write
\[ l_F(c) = \int_a^bF(\dot c) dt \]
for the $F$-length of absolutely continuous ($C^{ac}$) curves $c:[a,b]\to X$ and
\[ d_F(x,y) = \inf\{ l_F(c) ~|~ c:[0,1]\to X ~ C^{ac}, ~ c(0)=x, ~ c(1)=y \} \]
for the $F$-distance. Note that if $F$ is not reversible, i.e. if not $F(\lam v)=|\lam|F(v)$ for all $\lam\in\R$, we have $d_F(x,y)\neq d_F(y,x)$ in general.

\begin{defn}
A $C^{ac}$ curve segment $c:[a,b]\to X$ is said to be \emph{minimal}, if $l_F(c)=d_F(c(a),c(b))$. Curves $c:[0,\infty)\to X$, $c:(-\infty,0]\to X$, $c:\R\to X$ are called \emph{forward rays, backward rays, minimal geodesics}, respectively, if each restriction $c|_{[a,b]}$ is a minimal segment. Set
\begin{align*}
\RR_- & := \{v \in SX : c_v:(-\infty,0]\to X \text{ is a backward ray} \}, \\
\RR_+ & := \{v \in SX : c_v:[0,\infty)\to X \text{ is a forward ray} \}, \\
\M & := \{v \in SX : c_v:\R\to X \text{ is a minimal geodesic} \}.
\end{align*}
\end{defn}

We will in this paper mainly be concerned with forward rays, the results for backward rays being completely analogous. The sets $\RR_-,\RR_+$ and $\M$ are $\phi_F^t$-invariant for $t<0,t>0, t\in\R$, respectively. Moreover, all sets in the above definition are closed subsets of $SX$ and (when seen in the quotient $SM$) the $\al$-, $\om$-limit sets of $\RR_-,\RR_+$, respectively, are contained in $\M$. 

The following lemma is a key property of rays. It excludes in particular successive intersections of rays and shows that asymptotic rays can cross only in a common initial point. The idea of the proof is classical and can be found e.g. in \cite{paper1_diss}, Lemma 2.20.

\begin{lemma}\label{crossing minimals}
Let $v,w\in \RR_+$ with $\pi w=c_v(a)$ for some $a>0$, but $w\neq \dot c_v(a)$. Then for all $\delta>0$
\[ \inf \{ d_g(c_v(s),c_w(t)) : s\in [a,\infty), t\in [\delta,\infty) \} > 0. \]
\end{lemma}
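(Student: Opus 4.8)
The plan is to argue by contradiction using the triangle inequality and the fact that minimal segments realize the $F$-distance. Suppose the infimum in the statement is zero. Then there exist sequences $s_n \in [a,\infty)$ and $t_n \in [\delta,\infty)$ with $d_g(c_v(s_n), c_w(t_n)) \to 0$; in particular $d_F(c_v(s_n), c_w(t_n)) \to 0$ and also $d_F(c_w(t_n), c_v(s_n)) \to 0$, using the uniform equivalence of $F$ with the norm of $g$ (and the fact that on a compact manifold the reversed $F$-distance of two nearby points is small). The first step is to use minimality of $c_v$ on $[0,s_n]$ and of $c_w$ on $[0,t_n]$ to write $d_F(c_v(0), c_v(s_n)) = s_n$ and $d_F(c_w(0), c_w(t_n)) = t_n$. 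Since $c_w(0) = \pi w = c_v(a)$, and $c_v$ restricted to $[0,a]$ is minimal, we have $d_F(c_v(0), c_w(0)) = a$.

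The second step is the key estimate. From the minimality of $c_v$ on $[0, s_n]$,
\[
s_n = d_F(c_v(0), c_v(s_n)) \le d_F(c_v(0), c_w(0)) + d_F(c_w(0), c_w(t_n)) + d_F(c_w(t_n), c_v(s_n)) = a + t_n + o(1).
\]
Symmetrically, from minimality of $c_w$ on $[0,t_n]$, going from $c_w(0) = c_v(a)$ along $c_v$ to $c_v(s_n)$, then crossing to $c_w(t_n)$,
\[
t_n = d_F(c_w(0), c_w(t_n)) \le d_F(c_v(a), c_v(s_n)) + d_F(c_v(s_n), c_w(t_n)) = (s_n - a) + o(1),
\]
using that $c_v$ is minimal on $[a, s_n]$ (here $s_n \ge a$). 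Adding the two inequalities gives $s_n + t_n \le s_n + t_n + o(1)$, which is consistent, but more importantly each inequality must be an \emph{equality up to} $o(1)$. In particular $d_F(c_v(0), c_v(s_n)) = d_F(c_v(0), c_w(0)) + d_F(c_w(0), c_w(t_n)) + o(1)$, so the concatenation of $c_v|_{[0,a]}$, $c_w|_{[0,t_n]}$ and a short segment from $c_w(t_n)$ to $c_v(s_n)$ is an almost-minimal curve from $c_v(0)$ to $c_v(s_n)$.

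The final step is to derive a contradiction with $w \ne \dot c_v(a)$. The almost-minimality just obtained, combined with the strict convexity of $F$ and standard first-variation/uniqueness arguments for minimizers, forces the broken curve through $c_v(a) = c_w(0)$ to be essentially a genuine geodesic without a corner at $c_w(0)$; letting $n \to \infty$ (so $s_n, t_n \to \infty$, which one checks since both distances are unbounded) and passing to a limit of the rescaled configurations, one concludes that $c_w$ coincides with the forward continuation of $c_v$ beyond $a$, i.e. $w = \dot c_v(a)$, contradicting the hypothesis. \textbf{The main obstacle} is making this last step rigorous in the Finsler, non-reversible setting: one must quote or reprove a unique-minimizer statement (two minimal segments sharing an interior point and having almost no corner must agree) and handle the passage to the limit $s_n, t_n \to \infty$ carefully, which is precisely the ``classical'' argument referred to in the text (cf. \cite{paper1_diss}, Lemma 2.20). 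A clean way to package it is to first reduce to the case $s_n \to \infty$, $t_n \to \infty$ by noting that if, say, $s_n$ stays bounded then $c_v(s_n)$ and $c_w(t_n)$ lie in a compact set while $t_n$ would also have to stay bounded, and then a limit point $(c_v(s_\infty), c_w(t_\infty))$ would be an actual intersection point of two rays with equal distance from their origins along minimal segments, again contradicting $w \ne \dot c_v(a)$ via the no-corner argument.
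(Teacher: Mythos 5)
Your opening moves are sound and track the classical argument: set up the two triangle inequalities, note that $c_w(0)=c_v(a)$ lets you replace $d_F(c_v(0),c_w(0))$ by $a$ and $d_F(c_w(0),c_v(s))$ by $s-a$, and conclude that the concatenation $c_v|_{[0,a]}\cdot c_w|_{[0,t_n]}$ (plus the short bridge) is within $o(1)$ of minimal. However, the final step has a genuine gap, and the fix you propose — ``passing to a limit of the rescaled configurations'' as $s_n,t_n\to\infty$ — does not work. If $s_n,t_n\to\infty$, any sensible limit is taken by shifting the parametrization to the approach points $c_v(s_n), c_w(t_n)$; the corner at $c_v(a)$ then recedes to time $-\infty$ and is simply not visible in the limit, so no contradiction with $w\neq\dot c_v(a)$ can be extracted that way. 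Likewise, ``almost-minimal curves have almost no corner'' is false as a pointwise statement — almost-minimality is a global length estimate and says nothing about the corner unless you quantify what the corner costs.

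The missing idea is that the corner at $c_v(a)$ costs a fixed, $n$-independent amount. Since $w\neq\dot c_v(a)$ and geodesics are $C^1$, the concatenation $c_v|_{[0,a]}\cdot c_w|_{[0,\delta]}$ is not a geodesic segment and hence not minimal, so
\[
\eta := (a+\delta) - d_F\bigl(c_v(0),c_w(\delta)\bigr) > 0.
\]
Now for \emph{any} $s\geq a$, $t\geq\delta$, your two triangle inequalities become, keeping track of this gap,
\[
s \leq d_F(c_v(0),c_w(\delta)) + (t-\delta) + d_F(c_w(t),c_v(s)) = a+t-\eta + d_F(c_w(t),c_v(s)),
\]
\[
t \leq (s-a) + d_F(c_v(s),c_w(t)),
\]
and adding yields $\eta \leq d_F(c_w(t),c_v(s)) + d_F(c_v(s),c_w(t)) \leq 2c_F\, d_g(c_v(s),c_w(t))$. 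This gives the uniform positive lower bound $d_g(c_v(s),c_w(t)) \geq \eta/(2c_F)$ directly — no limiting, no case split on whether $s_n,t_n$ are bounded, and no appeal to a unique-minimizer lemma. The unique-minimizer statement you invoke is only what hides inside the single inequality $\eta>0$, and $\delta>0$ is used exactly there (for $\delta=0$ there is no corner to cut). I'd encourage you to replace the last paragraph of your proposal by this quantitative argument.
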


\subsection{Asymptotic directions of minimal rays}\label{section morse}

The following theorem due to H. M. Morse, which we already loosely stated as Theorem \ref{morse lemma intro} in the introduction and call the {\em Morse Lemma}, is the starting point for our work. The fact that the Morse Lemma also holds in the Finsler case was first observed by E. M. Zaustinsky \cite{zaustinsky}. From now on we use the fact that $M$ carries a Riemannian metric $g$ of strictly negative curvature.

\begin{thm}[Morse Lemma \cite{morse}]\label{morse lemma}
There exists a constant $D\geq 0$ depending only on $F, g$ with the following property: For any two points $x,y\in X$, any $F$-minimal segment $c:[0,1]\to X$ with $c(0)=x, ~ c(1)=y$ satisfies
\[ \max_{t\in[0,1]} d_g(\g_{x,y},c(t)) \leq D,  \]
where $\g_{x,y}\subset X$ is the unique $g$-geodesic segment from $x$ to $y$.
\end{thm}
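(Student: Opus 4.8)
The plan is to reduce the Morse Lemma to the standard stability of quasi-geodesics in a Gromov hyperbolic space, applied to the metric space $(X,d_g)$, which by assumption is isometric to the Poincar\'e disc and hence $\delta_0$-hyperbolic for a universal constant $\delta_0$. The only input specific to $F$ is the uniform equivalence $\tfrac{1}{c_F}F\le|.|_g\le c_F F$ recorded in Section~\ref{section basics}; the constant $D$ will then depend only on $c_F$ and $\delta_0$, i.e. only on $F$ and $g$.

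The key step is to show that an $F$-minimal segment, suitably parametrized, is a quasi-geodesic for $d_g$. First I would record the elementary fact that every subsegment of an $F$-minimal segment is again $F$-minimal: otherwise one could replace the subsegment by a shorter $C^{ac}$ curve and, by the triangle inequality for $d_F$, contradict $l_F(c)=d_F(c(0),c(1))$. Now let $c:[0,1]\to X$ be $F$-minimal with endpoints $x,y$; since $c$ is a regular curve, reparametrize its image by $g$-arclength to obtain $\tilde c:[0,L]\to X$ with $L=l_g(c)$. For $0\le s\le t\le L$ the subarc $\tilde c|_{[s,t]}$ has $g$-length $t-s$, whence $d_g(\tilde c(s),\tilde c(t))\le t-s$; on the other hand this subarc is $F$-minimal, and estimating $d_F$ from above by $c_F\, d_g$ (using a $g$-geodesic as competitor curve) gives
\[ t-s \;=\; l_g(\tilde c|_{[s,t]}) \;\le\; c_F\, l_F(\tilde c|_{[s,t]}) \;=\; c_F\, d_F(\tilde c(s),\tilde c(t)) \;\le\; c_F^{\,2}\, d_g(\tilde c(s),\tilde c(t)). \]
Hence $\tfrac{1}{c_F^{2}}|s-t|\le d_g(\tilde c(s),\tilde c(t))\le|s-t|$, so $\tilde c$ is a $(c_F^{2},0)$-quasi-geodesic in $(X,d_g)$, with endpoints $x$ and $y$.

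Finally I would invoke the Morse lemma on quasi-geodesic stability: in a $\delta_0$-hyperbolic geodesic space, every $(\lambda,0)$-quasi-geodesic lies in the $R(\lambda,\delta_0)$-neighborhood of any geodesic joining its endpoints. Applying this with $\lambda=c_F^{2}$ to $\tilde c$, the relevant geodesic being $\g_{x,y}$, yields $\max_{t\in[0,1]}d_g(\g_{x,y},c(t))\le D$ with $D:=R(c_F^{2},\delta_0)$, which is exactly the claim. The one genuinely nontrivial ingredient is the quasi-geodesic stability theorem itself; if one wants a self-contained argument in place of this black box, one reproves it directly in $(X,g)$ using the exponential divergence of geodesics and the convexity of tubular neighborhoods of $g$-geodesics in the hyperbolic plane: a curve reaching $g$-distance $R$ from $\g_{x,y}$ is forced to have $g$-length exponential in $R$ along the corresponding subarc, which against the linear bound $l_g(\tilde c|_{[s,t]})\le c_F^{2}\,d_g(\tilde c(s),\tilde c(t))$ bounds $R$ in terms of $c_F$ alone. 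This is essentially Morse's original reasoning, and it is the only step requiring real work; everything else is bookkeeping with the constant $c_F$.
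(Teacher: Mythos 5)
The paper does not actually prove Theorem~\ref{morse lemma}; it states it as a known result, citing Morse~\cite{morse} for the Riemannian case and Zaustinsky~\cite{zaustinsky} for the Finsler generalization. Your argument is therefore not a reconstruction of a proof in the paper, but it is correct, and it is the clean modern route to this result.

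Both steps are sound. The subsegment of an $F$-minimal segment is again $F$-minimal, and together with the two-sided fiberwise bound $\tfrac{1}{c_F}F\le|.|_g\le c_F F$ this gives
\[
\tfrac{1}{c_F^{2}}\,|s-t|\;\le\; d_g\bigl(\tilde c(s),\tilde c(t)\bigr)\;\le\;|s-t|
\]
for the $g$-arclength reparametrization $\tilde c$, so $\tilde c$ is a $(c_F^{2},0)$-quasi-geodesic in $(X,d_g)$. Since $(X,d_g)$ is isometric to the Poincar\'e disc, which is a geodesic, $\delta_0$-hyperbolic space for a universal $\delta_0$, quasi-geodesic stability yields a constant $D=D(c_F,\delta_0)$ with $\tilde c$ (and hence $c$, which has the same image) contained in the $D$-neighborhood of $\g_{x,y}$. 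This is precisely the assertion, with $D$ depending only on $F$ and $g$. Note also that non-reversibility of $F$ causes no trouble: minimality of subsegments is a directed notion and the comparison with $|.|_g$ is two-sided on each fiber, so both inequalities in your quasi-geodesic estimate respect the orientation of $c$.

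Compared with the argument of Morse (and its Finsler adaptation by Zaustinsky), you trade Morse's explicit hyperbolic-geometric estimates for the black box of quasi-geodesic stability in Gromov-hyperbolic spaces; as you note at the end, the underlying mechanism, namely exponential divergence of hyperbolic geodesics set against a linear length bound, is identical. What the modern packaging buys is generality: the same argument works verbatim whenever $(X,d_g)$ is any geodesic Gromov-hyperbolic space, which makes the paper's remark that Theorem~\ref{morse lemma intro} persists for closed negatively curved manifolds in higher dimensions (cf.~\cite{klingenberg}) an immediate consequence of your proof.
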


As in the introduction, we write $\G$ for the set of all oriented, unparame\-trized $g$-geodesics $\g\subset X$. We can think of $\G$ as $S^1\times S^1-\diag$, associating to $\g\in\G$ its pair of endpoints $(\g(-\infty),\g(\infty))$ on $S^1$, where
\[ \g(\pm\infty)=\lim_{t\to\pm\infty} \g(t) \qquad \text{ in the euclidean sense in $\C \supset X$}. \]

If $c:[0,\infty)\to X$ is a ray (with respect to $F$) and $T_n\to \infty$, we let $\g_n$ be the sequence of $g$-geodesic segments from $c(0)$ to $c(T_n)$. Then $\g_n$ converges to a unique $g$-ray $\g:[0,\infty)\to X$ (convergence in the sense of initial velocities), independently of the choice of $T_n$, and we write $c(\infty) := \g(\infty)$. The uniqueness follows easily from the Morse Lemma and the fact that different $g$-rays initiating from $c(0)$ diverge in $X$ due to negative curvature of $g$. Moreover, if $\g:\R\to X$ is a $g$-geodesic, we find a convergent subsequence of $F$-minimal segments from $\g(-n)$ to $\g(n)$, which yields a minimal geodesic $c:\R\to X$ with $c(\pm\infty)=\g(\pm\infty)$. In this way we see that the Morse Lemma holds equally well for points $x \neq y$ in $X \cup S^1$. Furthermore, it is easy to show for $v_n,v\in\RR_\pm$, that
\[ v_n\to v \text{ in } SX \quad \implies  \quad  c_{v_n}(\pm\infty)\to c_v(\pm\infty) \text{ in } S^1. \]

\begin{defn}
For $\del  \in S^1$ and $\g\in \G$ we set
\begin{align*}
 \RR_\pm(\del ) & := \{ v\in \RR_\pm : c_v(\pm\infty)=\del \}, \\
 \M(\g) & := \{ v\in \M : c_v(-\infty)=\g(-\infty) \text{ and } c_v(\infty)=\g(\infty) \}.
\end{align*}
\end{defn}

Hence, by the discussion above, we have
\[ \RR_\pm = \bigcup \big\{ \RR_\pm(\del) : \del\in S^1 \}, \qquad \M = \bigcup\big\{ \M(\g) : \g\in\G \big\} .\]

As observed already by Morse \cite{morse}, in each $\M(\g)$ there are two particular minimal geodesics, which we call the {\em bounding geodesics} of $\M(\g)$. We give the proof here to indicate that it holds equally well for general Finsler metrics.

\begin{lemma}[bounding geodesics]\label{bounding geodesics morse}
For all $\g\in\G$, there are two particular, non-intersecting, minimal geodesics $c_\g^0,c_\g^1$ in $\M(\g)$, such that all minimal geodesics in $\M(\g)$ lie in the strip in $X$ bounded by $c_\g^0(\R)$ and $c_\g^1(\R)$.

Moreover, if $S\subset X$ is the closed strip between $c_\g^0,c_\g^1$, then any ray $c:[0,\infty)\to X$ initiating in $S$ with $c(\infty)=\g(\infty)$ lies eventually in $S$, i.e. there exists $T \geq 0$ with $c[T,\infty)\subset S$. The analogous statement holds for backward rays with $c(-\infty)=\g(-\infty)$.
\end{lemma}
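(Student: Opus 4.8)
My plan is to realize $c_\g^0$ and $c_\g^1$ as the boundaries of the two ``outermost'' regions that $\M(\g)$ cuts out of $X$, and then to deduce the statement about rays by combining the Morse Lemma (Theorem~\ref{morse lemma}) with the no-crossing Lemma~\ref{crossing minimals}. First I would fix notation. By Theorem~\ref{morse lemma} every $c\in\M(\g)$ has $c(\R)\subset T_D:=\{x\in X:d_g(x,\g)\le D\}$, a tube that separates $X$ into a ``left'' component $L$ and a ``right'' component $R$ whose closures meet $S^1$ in the two closed subarcs determined by $\g(\pm\infty)$. Each $c\in\M(\g)$ is an embedded line (minimality forbids self-intersections) with the same endpoints $\g(\pm\infty)$ on $S^1$ as $\g$, so $c(\R)$ separates $X$ into open components $L(c)\supset L$ and $R(c)\supset R$, oriented compatibly with $\g$.

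I would then let $\Omega_0$ be the connected component, containing $L$, of the interior of $\bigcap_{c\in\M(\g)}L(c)$, define $\Omega_1$ symmetrically from the sets $R(c)$, and put $S:=X\setminus(\Omega_0\cup\Omega_1)$. Then $S$ is closed and contains every $c(\R)$, because $\Omega_0\subset L(c)$ and $\Omega_1\subset R(c)$ avoid $c(\R)$. The task thus reduces to showing that $c_\g^0:=\partial\Omega_0\cap X$ and $c_\g^1:=\partial\Omega_1\cap X$ are the traces of minimal geodesics in $\M(\g)$; granting this, $S$ is exactly the closed strip between them, and $c_\g^0,c_\g^1$ are non-intersecting since their traces lie in the closures of the disjoint sets $\Omega_0,\Omega_1$ (they may coincide, e.g.\ when $\M(\g)$ consists of a single geodesic). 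Two facts I would record first are easy: every $x\in\partial\Omega_0$ lies on some minimal geodesic of $\M(\g)$---being a limit of points outside $\bigcap_c L(c)$, $x$ is approached by geodesics $c_n\in\M(\g)$, and a subsequence of their velocities at near-points of $x$ converges in $SX$ to a vector whose geodesic, by closedness of $\M$ and the continuity of endpoints recorded after Theorem~\ref{morse lemma}, lies in $\M(\g)$ and passes through $x$; and every such $x$ satisfies $x\notin R(c)$ for all $c\in\M(\g)$, i.e.\ $x$ is weakly to the left of all of $\M(\g)$.

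The hard part---and the step I expect to be the main obstacle---is to upgrade this to: a \emph{single} geodesic $c_\g^0\in\M(\g)$ has its whole trace weakly to the left of every element of $\M(\g)$ (equivalently $c_\g^0(\R)\cap R(c)=\emptyset$ for all $c$), and $\partial\Omega_0=c_\g^0(\R)$ (and symmetrically for $c_\g^1$). A priori $\partial\Omega_0$ could be a concatenation of arcs of several minimal geodesics meeting in corners at their mutual transversal crossings; excluding this amounts to proving that the outermost minimal geodesic is never crossed transversally by another element of $\M(\g)$. This is exactly what Lemma~\ref{crossing minimals} (together with its analogue for backward rays) is meant to deliver, as it forbids the crossing pattern of two rays with a common asymptotic direction that such a corner would force. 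In practice I would either argue by contradiction from a hypothetical corner, or---preferably---construct $c_\g^0$ directly as the limit of a sequence in $\M(\g)$ pushed monotonically ``outward'' (the Morse Lemma keeps the sequence in $T_D$, so a subsequence of velocities converges), using Lemma~\ref{crossing minimals} to certify that the limit geodesic is genuinely outermost.

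Finally, for the statement about rays, let $c:[0,\infty)\to X$ be a ray with $c(0)\in S$ and $c(\infty)=\g(\infty)$. I would first show $c$ meets $c_\g^1(\R)$ in only finitely many points: if $c(a_i)=c_\g^1(b_i)$ with $a_i$ strictly increasing, Lemma~\ref{crossing minimals} applied at successive crossings forces the $b_i$ to be strictly decreasing; they are bounded below because $c$, lying within $D$ of the $g$-ray from $c(0)$ to $\g(\infty)$, never approaches $\g(-\infty)$; hence the crossings $c(a_i)=c_\g^1(b_i)$ converge, contradicting that transversal intersections are isolated (a non-transversal crossing would make $c$ coincide with $c_\g^1$ from then on, which already finishes the proof). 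The same applies to $c_\g^0(\R)$. So $c[T,\infty)$ lies, for some $T\ge0$, in a single connected component of $X\setminus(c_\g^0(\R)\cup c_\g^1(\R))$, and it remains to exclude that this component is $\Omega_0$ or $\Omega_1$. If, say, $c[T,\infty)\subset R(c_\g^1)$, then from the tail of $c$---passing to the limit in the minimal segments joining points $c_\g^1(-n)$ to points $c(t_n)\to\g(\infty)$, which the Morse Lemma keeps in a fixed tube and which Lemma~\ref{crossing minimals} lets one control---one would obtain an element of $\M(\g)$ that is not entirely weakly left of $c_\g^1$, contradicting that $c_\g^1$ is outermost; the case $\Omega_0$ is symmetric. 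I expect this last exclusion to be the delicate point of the ray statement. Hence $c[T,\infty)\subset S$, and the backward-ray version follows by the same argument with $\RR_-$ and $\g(-\infty)$.
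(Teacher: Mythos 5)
You correctly identify the crux: showing that the outermost element of $\M(\g)$ is a single geodesic, never crossed by another element of $\M(\g)$. But your two proposed resolutions leave a genuine gap. Lemma~\ref{crossing minimals} does not forbid two elements of $\M(\g)$ from crossing once: after a transversal crossing they must diverge in $d_g$, yet both can still remain in the Morse tube around $\g$ and tend to $\g(\pm\infty)$; indeed, when $\del\in\Fix(\Gam)$, heteroclinic orbits inside a Birkhoff gap can do exactly this. For the same reason, ``pushing monotonically outward'' inside $\M(\g)$ is not a priori well defined, since single crossings destroy the partial order. So your worry about $\partial\Omega_0$ being a concatenation of arcs with corners is not actually addressed by the devices you invoke.

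The paper's construction sidesteps this entirely by approximating from \emph{outside} the endpoint pair $(\g(-\infty),\g(\infty))$ rather than from within $\M(\g)$: take $g$-geodesics $\g_n\to\g$ with $\overline{\g_n}\cap\overline{\g}=\emptyset$ in $X\cup S^1$, with both endpoints of $\g_n$ strictly on one side of those of $\g$, and pick any $c_n\in\M(\g_n)$. Because the endpoint pairs of $c_n$ and of any $c\in\M(\g)$ are disjoint, with both of $\g_n$'s on the same side arc, a single crossing of $c_n$ with $c$ forces a second crossing, which is impossible for two minimal curves. Hence $c_n$ avoids all of $\M(\g)$, and a convergent subsequence (the Morse Lemma gives the compactness) yields $c_\g^1\in\M(\g)$ which no element of $\M(\g)$ crosses. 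That is the missing idea. It also makes the ray statement immediate, where your argument is heavier than needed and has a sketchy final exclusion step: if $c(0)\in S$, $c(\infty)=\g(\infty)$, and $c(t_m)$ lies left of $c_\g^1$ for $t_m\to\infty$, then for $n$ large the ray $c$ must cross $c_n$ at least twice (once going left to reach $c(t_m)$, once returning toward $\g(\infty)$, which lies on the opposite side of $c_n(\infty)=\g_n(\infty)$), contradicting minimality.
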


As a rule, we will always assume that $c_\g^1$ lies left of $c_\g^0$ with respect to the orientation of $\g$.

\begin{proof}
Let $\g_n\subset X$ be a sequence of $g$-geodesics with $\overline \g_n\cap \overline \g = \emptyset$ in $X\cup S^1$ (i.e. also the endpoints on $S^1$ are disjoint) and such that $\g_n(\pm\infty)\to\g(\pm\infty)$. Let us assume that $\g_n$ lie left of $\g$. If $c_n$ is a sequence of minimal geodesics in $\M(\g_n)$, then by minimality no $c_n$ can intersect any minimal geodesic from $\M(\g)$ and hence, we find a unique limit minimal geodesic $c_\g^1$ in $\M(\g)$. Analogously one obtains $c_\g^0$. By construction, $c_\g^i$ do not intersect, since their defining sequences do not intersect.

Suppose $c:[0,\infty)\to X$ is a ray with e.g. $c(0)\in S$ and $c(\infty)=\g(\infty)$, but $c[T,\infty)\not\subset S$ for all $T\geq 0$. We then find $t_n\to\infty$ with $c(t_n)\notin S$, e.g. lying left of $c_\g^1$, while $c(t_n)\to\g(\infty)$. Thus, if $c_n\to c_\g^1$ as in the construction of $c_\g^1$, we obtain by the asymptotic behavior successive intersections of $c_n$ with $c$ for large $n$, contradicting minimality.
\end{proof}

\subsection{Horofunctions}\label{section weak KAM}

We introduce in this subsection a useful class of functions $u:X\to \R$, which is naturally associated to the Finsler metric $F$. It has been studied in various situations in the literature, in particular in the setting of Hadamard manifolds. Let us fix any ``origin'' $o\in X$. For a sequence $x_n\in X$ with $x_n\to \del \in S^1$ (in the euclidean sense), any $C_{loc}^0$ limit $u\in C^0(X)$ of the sequence of functions
\[ x \mapsto d_F(o,x_n) - d_F(x,x_n)  \]
is called a {\em horofunction of direction $\del\in S^1$}. The set of all horofunctions of direction $\del$ will be denoted by
\[ \H_+(\del) := \{ u\in C^0(X) ~|~ \exists x_n\to\del : u = \lim d_F(o,x_n) - d_F(.,x_n) \} \]
and we set
\[ \H_+ := \bigcup\big\{ \H_+(\del) : \del\in S^1 \big\} . \]

\begin{lemma}\label{existence horofunction}
If $x_n\in X$ with $x_n\to \del\in S^1$, then the sequence of functions $d_F(o,x_n) - d_F(. ,x_n) : X\to \R$ has $C_{loc}^0$ limit functions $u\in C^0(X)$. Moreover, any so obtained function $u\in \H_+(\del)$ has the following two properties:
\begin{enumerate}
 \item $u(y)-u(x)\leq d_F(x,y)$ for all $x,y\in X$,

 \item for all $x\in X$ there exists a forward ray $c:[0,\infty)\to X$ with $c(0)=x$, $c(\infty)=\del$, $F(\dot c)=1$ and $u(c(t))-u(x) = t$ for all $t\in[0,\infty)$.
\end{enumerate}
\end{lemma}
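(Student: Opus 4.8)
\textbf{Proof plan for Lemma \ref{existence horofunction}.}

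The plan is to establish the three assertions in order. First, for the existence of $C^0_{loc}$ limits: the functions $u_n(x) = d_F(o,x_n) - d_F(x,x_n)$ satisfy $u_n(o) = 0$, so they are normalized at the origin. By the triangle inequality for $d_F$ (using symmetry of the \emph{argument}, i.e. $|d_F(x,x_n) - d_F(y,x_n)| \le \max\{d_F(x,y), d_F(y,x)\}$, which follows from $d_F(x,x_n) \le d_F(x,y) + d_F(y,x_n)$ and the symmetric inequality), the family $\{u_n\}$ is uniformly Lipschitz on $X$ with respect to $d_g$ — here one uses the uniform equivalence $\tfrac{1}{c_F} F \le |.|_g \le c_F F$ to convert $d_F$-Lipschitz bounds into $d_g$-Lipschitz bounds. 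Since the $u_n$ are equi-Lipschitz and uniformly bounded on every $d_g$-ball around $o$, the Arzel\`a--Ascoli theorem yields a subsequence converging in $C^0_{loc}$ to some $u \in C^0(X)$; by definition $u \in \H_+(\del)$.

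For property (1): each $u_n$ satisfies $u_n(y) - u_n(x) = d_F(x,x_n) - d_F(y,x_n) \le d_F(x,y)$, again by the triangle inequality $d_F(x,x_n) \le d_F(x,y) + d_F(y,x_n)$. This inequality passes to the $C^0_{loc}$ limit, giving $u(y) - u(x) \le d_F(x,y)$ for all $x,y \in X$.

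Property (2) is the substantive point. Fix $x \in X$. For each $n$, let $c_n : [0, d_F(x,x_n)] \to X$ be a unit-speed $F$-minimal segment from $x$ to $x_n$; such a segment exists by completeness (compactness of $M$). Along $c_n$ one computes exactly $u_n(c_n(t)) - u_n(x) = d_F(x, x_n) - d_F(c_n(t), x_n) = t$, using that $c_n$ is minimizing so $d_F(c_n(t), x_n) = d_F(x,x_n) - t$. Now pass to a limit: the initial velocities $\dot c_n(0)$ lie in the compact fiber $S_xX$, so a subsequence converges to some $v \in S_xX$, and by continuous dependence of $F$-geodesics on initial conditions, $c_n \to c := c_v$ in $C^1_{loc}$ on $[0,\infty)$ (here one uses $d_F(x,x_n) \to \infty$, which holds because $x_n \to \del \in S^1$ escapes every compact subset of $X$). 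Since each $c_n|_{[0,T]}$ is $F$-minimal for $n$ large and minimality is a closed condition, $c$ is a forward ray with $c(0) = x$ and $F(\dot c) = 1$. Combining $c_n \to c$ locally uniformly with $u_n \to u$ locally uniformly, and using the equi-Lipschitz bound to control the product of limits, we get $u(c(t)) - u(x) = \lim_n \big(u_n(c_n(t)) - u_n(x)\big) = t$ for all $t \ge 0$. Finally, $c(\infty) = \del$: since $c_n$ ends at $x_n$ and the $g$-geodesic from $c(0)$ to $x_n$ converges (by the Morse Lemma, Theorem \ref{morse lemma}, applied at infinity and the definition of $c_v(\infty)$ from Section \ref{section morse}) to the $g$-ray defining $c(\infty)$, while $x_n \to \del$, one concludes $c(\infty) = \del$; the non-positive-curvature-type divergence of distinct $g$-rays makes this identification unambiguous.

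\textbf{Main obstacle.} The delicate part is justifying the interchange of the two limits in the identity $u(c(t)) - u(x) = \lim_n (u_n(c_n(t)) - u_n(x))$: one must simultaneously use that $u_n \to u$ only \emph{locally} uniformly and that $c_n(t) \to c(t)$, which requires the equi-Lipschitz estimate on $\{u_n\}$ to bound $|u_n(c_n(t)) - u_n(c(t))| \le c_F^2\, d_g(c_n(t), c(t)) \to 0$. A second subtlety is ensuring $d_F(x, x_n) \to \infty$ so that the limiting geodesic $c$ is genuinely defined on all of $[0,\infty)$ rather than a bounded interval — this is where $x_n \to \del \in S^1$ (escape to the boundary at infinity) is essential.
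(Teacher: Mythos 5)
Your proof is correct. The paper itself does not prove this lemma but defers to Lemma 3.5 of \cite{paper1}; your argument (equi-Lipschitz $+$ Arzel\`a--Ascoli for existence, the triangle inequality for (1), and a diagonal/compactness limit of unit-speed minimal segments $c_n$ from $x$ to $x_n$ together with the Morse Lemma for (2) and the identification $c(\infty)=\del$) is the standard Busemann-function construction that \cite{paper1} uses, and the two subtleties you flag -- the uniform Lipschitz bound $c_F^2\,d_g(c_n(t),c(t))$ needed to interchange limits, and $d_F(x,x_n)\to\infty$ so that $c$ is defined on all of $[0,\infty)$ -- are exactly the points that need care.
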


The proof of Lemma \ref{existence horofunction} can be found as Lemma 3.5 in \cite{paper1}.

For $u\in \H_+$ we will write
\[ \J_+(u) := \{ v\in SX : u \circ c_v(t)-u \circ c_v(0) = t ~ \forall t\geq 0 \}. \]
It follows from property (1) in Lemma \ref{existence horofunction}, that $\J_+(u)\subset \RR_+$; property (2) shows $\pi(\J_+(u)) = X$. Moreover, property (1) and the uniform equivalence of $F$ and $g$ show that $u\in\H_+$ is Lipschitz with respect to $d_g$,
\[ |u(x)-u(y)| \leq c_F\cdot d_g(x,y) \qquad \forall u\in \H_+, x,y\in X. \]

The following lemma is well-known; indications to the proof can be found in Lemma 3.2 in \cite{paper1}. If $u:X\to\R$ is differentiable in $x\in X$, write
\[ \grad_F u(x) := \LL_F^{-1}(du(x)), \quad \text{where } \LL_F(v) = \frac{1}{2} d_vF^2(v) \in T_{\pi v}^*X \subset T^*X. \]
Here $d_v$ denotes differentiation along the fiber, $\LL_F:TX\to T^*X$ is the Legendre transform associated to $F$. Note that if $F$ is Riemannian, then $\grad_Fu$ is the usual gradient.

\begin{lemma}\label{lemma fathi}
Let $u\in \H_+$. Then for all $\e>0$, $u$ is differentiable in $\pi\phi_F^\e(\J_+(u))$. If $u$ is differentiable in $x\in X$, then
\[ \J_+(u)\cap T_xX = \{ \grad_Fu(x) \}. \]
Moreover, the set $\phi_F^\e(\J_+(u))$ for $\e>0$ is locally a Lipschitz graph over its projection via $\pi$ in $X$.
\end{lemma}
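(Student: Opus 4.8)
The statement to prove is Lemma \ref{lemma fathi}: for $u \in \H_+$, differentiability on $\pi\phi_F^\e(\J_+(u))$ for $\e > 0$, the identification $\J_+(u) \cap T_xX = \{\grad_F u(x)\}$ at points of differentiability, and the Lipschitz-graph property of $\phi_F^\e(\J_+(u))$.

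My plan would be the following.

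\textbf{Plan.} The key structural fact is that any $v \in \J_+(u)$ generates a calibrated ray $c_v$, i.e. $u$ increases at unit speed along $c_v$, and by property (1) of Lemma \ref{existence horofunction} (the global estimate $u(y) - u(x) \le d_F(x,y)$) such a ray is a \emph{backward} calibrated curve as well: for $0 \le s \le t$ we get $u(c_v(t)) - u(c_v(s)) = t - s \le d_F(c_v(s), c_v(t)) \le l_F(c_v|_{[s,t]}) = t - s$, so in fact $c_v|_{[s,t]}$ is minimal and $u$ is ``calibrated'' by $c_v$ on all of $[0,\infty)$. First I would record that two calibrated rays through the same interior point must coincide: if $v, w \in \J_+(u)$ with $\pi v = \pi w = x$ and $v \ne w$, then $c_v$ and $c_w$ are two distinct minimal rays issuing from $x$, and Lemma \ref{crossing minimals} forbids them from being asymptotic to the same direction while sharing an initial point unless they agree (more precisely, for $\e > 0$, the points $\phi_F^\e v$ and $\phi_F^\e w$ lie on rays that must then be disjoint away from $x$ — but both are calibrated, hence both asymptotic, forcing, via the crossing lemma applied after a small positive time, a contradiction). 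This gives uniqueness of the element of $\J_+(u)$ over a point, once differentiability is known, and is the mechanism behind the graph property.

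\textbf{Differentiability.} To show $u$ is differentiable at $y = \pi\phi_F^\e v$ for $v \in \J_+(u)$, $\e > 0$: the ray $c_v$ passes through $y$ at an interior time ($t = \e > 0$, with the segment $c_v|_{[0,\e+1]}$ minimal through $y$), so $y$ is \emph{not} an endpoint of the calibrating minimizer. Thus $u$ is simultaneously a subsolution (property (1): $u$ is dominated, $du(y)(\dot c_v(\e)) = 1$ whenever differentiable) \emph{and} admits a calibrated curve passing \emph{through} $y$ in both time directions. The standard weak-KAM argument then applies: a dominated function (Lipschitz, hence differentiable a.e.) that is calibrated by a minimizing curve passing through $y$ in its interior is differentiable at $y$, with $\grad_F u(y) = \dot c_v(\e)$. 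I would carry this out by the usual double-sided estimate: $u(z) - u(y) \le d_F(y,z)$ gives $du(y) \le \LL_F(\dot c_v(\e))$ in the appropriate one-sided sense using the minimality of the backward piece $c_v|_{[0,\e]}$ to get the reverse inequality along directions, and strict convexity of $F$ (Definition \ref{def finsler}(3)) upgrades the supporting covector to a genuine derivative. This simultaneously proves $\J_+(u) \cap T_yX = \{\grad_F u(y)\}$: any $w \in \J_+(u) \cap T_yX$ generates a calibrated ray from $y$, which must have initial velocity realizing $du(y)$, and by strict convexity/the Legendre duality that velocity is exactly $\grad_F u(y)$.

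\textbf{Lipschitz graph.} Finally, for the graph property of $\Sigma := \phi_F^\e(\J_+(u))$ over its projection, I would argue that the map $\pi|_\Sigma$ is injective (just shown: one calibrated direction per point) and that its inverse is locally Lipschitz. The clean way: $\Sigma = \{ \grad_F u(y) : y \in \pi\Sigma \}$ with $u$ differentiable on the open-ish set $\pi\Sigma$, and $u$ is not merely Lipschitz but locally semiconcave on $\pi\phi_F^\e(\J_+(u))$ — being, near each such $y$, an infimum of the smooth functions $z \mapsto u(x_n) - d_F(z, x_n) + \text{const}$ (or, more robustly, $z \mapsto u(c_v(\e+1)) - d_F(z, c_v(\e+1))$ which is smooth near $y$ since $y$ is not conjugate/endpoint), hence a locally uniform infimum of uniformly-$C^2$ functions. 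A differentiable function that is locally a uniform infimum of $C^2$ functions is $C^{1,1}_{loc}$ where differentiable, so $y \mapsto du(y)$, and hence $y \mapsto \grad_F u(y)$ (composing with the smooth Legendre transform $\LL_F^{-1}$), is locally Lipschitz; this is precisely the statement that $\Sigma$ is locally a Lipschitz graph over $\pi\Sigma$.

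\textbf{Main obstacle.} The technical heart is the differentiability step — specifically, producing a $C^2$ (or uniformly semiconcave) upper barrier for $u$ at the point $y = \pi\phi_F^\e v$, which requires knowing that a minimal $F$-segment through $y$ extends minimally a definite amount past $y$ in \emph{both} directions (forward by the ray, backward by the calibration/minimality of $c_v|_{[0,\e]}$ with $\e > 0$), so that $y$ avoids the cut locus of the relevant base point and $z \mapsto d_F(z, \text{endpoint})$ is genuinely smooth near $y$. Once that barrier is in place, the rest — uniqueness via Lemma \ref{crossing minimals}, the identification with $\grad_F u$ via strict convexity, and the Lipschitz estimate via semiconcavity — is routine weak-KAM bookkeeping. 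I would expect to cite \cite{paper1} for the detailed barrier construction rather than reproduce it.
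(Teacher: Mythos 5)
The paper does not prove Lemma \ref{lemma fathi} itself; it refers the reader to Lemma 3.2 of \cite{paper1}, which contains the standard weak-KAM argument you are reconstructing. Your overall plan -- interior calibrated points, smooth distance barriers from both sides, Legendre duality via strict convexity of $F$ -- is the right one and matches what is being cited. The differentiability and $\J_+(u)\cap T_xX=\{\grad_Fu(x)\}$ steps are fine, and you correctly note that $\J_+(u)$ is forward flow-invariant so that the graph property reduces to uniqueness over a point.

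There is, however, a genuine error in your Lipschitz-graph step. You assert that $u$ is locally semiconcave near $y\in\pi\phi_F^\e(\J_+(u))$ because it is ``an infimum of the smooth functions $z\mapsto u(c_v(\e+1))-d_F(z,c_v(\e+1))$''. But by property (1) of Lemma \ref{existence horofunction}, $u(c_v(\e+1))-u(z)\le d_F(z,c_v(\e+1))$, so $u(c_v(\e+1))-d_F(\cdot,c_v(\e+1))$ is a \emph{lower} barrier touching $u$ at $y$. Taking a supremum of such lower barriers would, if anything, suggest semi\emph{convexity}, not semiconcavity; the infimum interpretation is simply backwards. Moreover, the inference ``differentiable plus locally a uniform infimum of $C^2$ functions $\Rightarrow C^{1,1}$ where differentiable'' is false as stated: $u(x)=-|x|^{3/2}$ is concave on $\R$ (hence semiconcave with constant $0$) and differentiable everywhere, yet $u'(x)=-\tfrac{3}{2}\,\mathrm{sgn}(x)\,|x|^{1/2}$ is not Lipschitz near $0$. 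One-sided semiconcavity on a set of differentiability does not control the modulus of continuity of $du$.

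The fix is exactly what you hint at in your ``main obstacle'' paragraph: use \emph{both} barriers. Since $\e>0$, the segment $c_v|_{[0,\e+1]}$ is minimal with $y=c_v(\e)$ strictly in its interior, so $\psi^+(z):=u(c_v(0))+d_F(c_v(0),z)$ is a smooth \emph{upper} barrier and $\psi^-(z):=u(c_v(\e+1))-d_F(z,c_v(\e+1))$ a smooth \emph{lower} barrier, both touching $u$ at $y$ with $C^2$-norms uniformly bounded as $y$ varies in a compact set (no conjugate points on the strict interior of a minimal segment, by compactness of $M$ and uniform equivalence of $F$ and $g$). The two-sided $C^2$ pinching then gives $|u(z)-u(y)-du(y)(z-y)|\le C|z-y|^2$ for $y\in\pi\phi_F^\e(\J_+(u))$ and $z$ nearby, and from this two-sided Taylor estimate (applied at $y_1,y_2$ and a test point) one deduces $|du(y_1)-du(y_2)|\le C'|y_1-y_2|$; composing with the smooth Legendre transform $\LL_F^{-1}$ gives the Lipschitz graph. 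So your argument is salvageable with the tools you already set up, but the semiconcavity shortcut as written does not close the gap.
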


We have two corollaries from Lemma \ref{lemma fathi}.

\begin{cor}\label{as dir well-def}
If $u\in\H_+(\del)$, then $\J_+(u)\subset \RR_+(\del)$. Moreover,
\[ \RR_+(\del) = \bigcup \big\{ \J_+(u) : u \in \H_+(\del) \big\}. \]
\end{cor}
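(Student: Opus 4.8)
The plan is to prove the two assertions separately. The inclusion $\RR_+(\del)\subseteq\bigcup\{\J_+(u):u\in\H_+(\del)\}$ is a direct limit construction, while the first assertion $u\in\H_+(\del)\Rightarrow\J_+(u)\subseteq\RR_+(\del)$ is where the graph/uniqueness structure from Lemma \ref{lemma fathi} is needed; note also that the reverse inclusion $\bigcup\{\J_+(u):u\in\H_+(\del)\}\subseteq\RR_+(\del)$ is exactly the first assertion, so the two statements together reduce to these two tasks.

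\textbf{The easy inclusion.} Let $v\in\RR_+(\del)$ and put $c:=c_v$, a forward ray with $c(\infty)=\del$, so $c(n)\to\del$ in $\C$ by Theorem \ref{morse lemma intro}. Applying Lemma \ref{existence horofunction} with $x_n:=c(n)$, along a subsequence the functions $u_n:=d_F(o,c(n))-d_F(\,\cdot\,,c(n))$ converge in $C^0_{loc}$ to some $u\in\H_+(\del)$. Since $c$ is minimal and $F(\dot c)\equiv1$, for $n>s$ one has $u_n(c(s))-u_n(c(0))=d_F(c(0),c(n))-d_F(c(s),c(n))=n-(n-s)=s$, and letting $n\to\infty$ gives $u(c(s))-u(c(0))=s$ for all $s\ge0$, i.e. $v\in\J_+(u)$.

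\textbf{The main inclusion.} Fix $u\in\H_+(\del)$ with $u=\lim_n u_n$, $u_n=d_F(o,x_n)-d_F(\,\cdot\,,x_n)$, $x_n\to\del$, and fix $v\in\J_+(u)$; since $\J_+(u)\subset\RR_+$, the curve $c:=c_v$ is a ray, and $\phi_F^t$-invariance of the calibration condition for $t\ge0$ gives $\phi_F^\e v\in\J_+(u)$ for every $\e>0$. Fix such an $\e$ and set $x:=c(\e)=\pi\phi_F^\e v$; by Lemma \ref{lemma fathi}, $u$ is differentiable at $x$ and $\J_+(u)\cap T_xX=\{\grad_F u(x)\}=\{\phi_F^\e v\}$. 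Let $\sigma_n:[0,L_n]\to X$ be $F$-unit-speed minimal segments from $x$ to $x_n$, so $L_n=d_F(x,x_n)\to\infty$; passing to a subsequence, $\dot\sigma_n(0)\to w$ with $\pi w=x$, hence $\sigma_n\to c_w$ uniformly on compacta by continuous dependence on initial conditions. Two consequences: first, by minimality $u_n(\sigma_n(s))-u_n(\sigma_n(0))=d_F(\sigma_n(0),x_n)-d_F(\sigma_n(s),x_n)=s$ for $n$ large, and using that $u_n\to u$ in $C^0_{loc}$ with uniform Lipschitz constant $c_F$ with respect to $d_g$ (so that $u_n(\sigma_n(s))\to u(c_w(s))$), we get $u(c_w(s))-u(c_w(0))=s$ for all $s\ge0$, i.e. $w\in\J_+(u)\cap T_xX=\{\phi_F^\e v\}$, whence $w=\phi_F^\e v$; second, by the Morse Lemma each $\sigma_n$ stays within $g$-distance $D$ of the $g$-geodesic segment from $x$ to $x_n$, which converges to the $g$-ray from $x$ to $\del$, so $c_w$ stays within $g$-distance $D$ of that $g$-ray and therefore $c_w(\infty)=\del$. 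Combining, $c_v(\infty)=c_{\phi_F^\e v}(\infty)=c_w(\infty)=\del$, so $v\in\RR_+(\del)$.

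\textbf{Obstacles.} The routine points being glossed over are the uniform Lipschitz estimate $|u_n(a)-u_n(b)|\le c_F\,d_g(a,b)$ (triangle inequality plus uniform equivalence of $F$ and $g$) and the elementary fact that a forward ray within bounded $d_g$-distance of a $g$-ray tending to $\del$ must itself tend to $\del$. The genuine subtlety, and the step I expect to be the crux, is the passage ``$w\in\J_+(u)$ and $\pi w=x$ $\Rightarrow$ $w=\phi_F^\e v$'': this is exactly why one first flows $v$ forward by $\e>0$, so that $u$ is differentiable at the base point $x=c(\e)$ and Lemma \ref{lemma fathi} makes the fiber $\J_+(u)\cap T_xX$ a singleton; at a non-differentiability point of $u$ this fiber need not be a singleton and the identification of $w$ with $\phi_F^\e v$ would fail. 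In other words, the heart of the matter is that the calibrated rays of a horofunction of direction $\del$ are precisely the limits of minimal segments toward a sequence converging to $\del$, once one works at a differentiability point of $u$.
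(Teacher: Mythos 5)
Your proof is correct and hinges on the same key mechanism as the paper's: flow $v$ forward by $\e>0$ so that $u$ is differentiable at $c_v(\e)$, then invoke Lemma \ref{lemma fathi} to make the fiber $\J_+(u)\cap T_{c_v(\e)}X$ a singleton, which forces the calibrated ray toward $\del$ to coincide with $\dot c_v(\e)$. The only difference is that you rebuild that calibrated ray from scratch as a limit of minimal segments to $x_n\to\del$ and then appeal to the Morse Lemma to get its endpoint, whereas the paper simply cites property (2) of Lemma \ref{existence horofunction}, which already provides a calibrated ray through any point with endpoint $\del$ — so you are, in effect, re-proving a special case of that lemma inside the corollary.
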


\begin{proof}
Let $v\in \J_+(u)$ and $t>0$. By property (2) in Lemma \ref{existence horofunction}, there exists a ray $c:[0,\infty)\to X$ with $\dot c\in\J_+(u)$ and $c(0)=c_v(t), c(\infty)=\del$. By Lemma \ref{lemma fathi} we find $\dot c(0)=\dot c_v(t)$, i.e. also $c_v(\infty)=c(\infty)=\del$ and $\J_+(u)\subset \RR_+(\del)$. For the second claim, let $v\in \RR_+(\del)$ and $x_n := c_v(n)$. The corresponding horofunction (called the Busemann function of $c_v$) belongs to $\H_+(\del)$ and one can easily see that $u \circ c_v(t)-u \circ c_v(0) = t$ for $t\geq 0$ by minimality of $c_v$, i.e. $v\in\J_+(u)$.
\end{proof}

\begin{cor}\label{J=J' then u=u'}
If $u,u'\in\H_+$ with $\J_+(u)=\J_+(u')$, then $u=u'$.
\end{cor}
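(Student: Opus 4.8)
The plan is to recover the differential $du$ of a horofunction $u$ from its calibrated set $\J_+(u)$ at every point where $u$ is differentiable (using Lemma \ref{lemma fathi}), and then to use that two locally Lipschitz functions on the connected domain $X$ with the same differential almost everywhere differ by a constant.

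First I would note that every $u \in \H_+$ is $c_F$-Lipschitz with respect to $d_g$, as observed after Lemma \ref{existence horofunction}, hence locally Lipschitz in the Euclidean sense on $X \subset \C$, since on compact subsets of the disc $g$ is bi-Lipschitz equivalent to the Euclidean metric. By Rademacher's theorem, $u$ is differentiable at Lebesgue-almost every point of $X$, and at each such point Lemma \ref{lemma fathi} gives $\J_+(u) \cap T_xX = \{\grad_F u(x)\}$. (Nonemptiness of this intersection is in any case guaranteed by property (2) of Lemma \ref{existence horofunction}, which produces a calibrated ray through every point of $X$.)

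Now let $u, u' \in \H_+$ with $\J_+(u) = \J_+(u')$. Away from a Lebesgue-null set both $u$ and $u'$ are differentiable, and there
\[ \{\grad_F u(x)\} = \J_+(u) \cap T_xX = \J_+(u') \cap T_xX = \{\grad_F u'(x)\}, \]
so $\grad_F u(x) = \grad_F u'(x)$; applying the Legendre transform $\LL_F$ yields $du(x) = du'(x)$. Thus $u - u'$ is locally Lipschitz on $X$ with differential $0$ almost everywhere, so---$X$ being connected---it is constant. Evaluating the defining limit at the base point $o$ gives $u(o) = \lim_n\big(d_F(o,x_n) - d_F(o,x_n)\big) = 0$, and likewise $u'(o) = 0$; hence the constant vanishes and $u = u'$.

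The argument is essentially routine once Lemma \ref{lemma fathi} is available; the only step needing a word is that a locally Lipschitz function with almost-everywhere vanishing differential on connected $X$ is constant, which is standard (it has zero weak derivative, hence is locally constant). One could also avoid Rademacher by restricting attention to $\pi\phi_F^\e(\J_+(u))$, where Lemma \ref{lemma fathi} guarantees differentiability outright, at the cost of checking that this set is large enough to pin down $u$; the measure-theoretic route above seems cleaner and is the one I would write up.
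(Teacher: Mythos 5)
Your proof is correct and follows essentially the same route as the paper: Rademacher's theorem gives almost-everywhere differentiability, Lemma \ref{lemma fathi} identifies $du$ from $\J_+(u)$ at such points, and $u(o)=0=u'(o)$ fixes the constant. You have simply spelled out in more detail the intermediate facts (uniform equivalence of $d_g$ with the Euclidean metric on compacts, the Legendre transform step, the ``Lipschitz with vanishing a.e.\ differential implies constant'' principle) that the paper leaves implicit.
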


\begin{proof}
Let $U\subset X$ be the set where both $u,u'$ are differentiable. $U$ has full measure by Rademacher's Theorem (horofunctions are Lipschitz). Lemma \ref{lemma fathi} and $\J_+(u)=\J_+(u')$ show $du(x)=du'(x)$ for all $x\in U$. Hence $u-u'$ is constant, while $u(o)=0=u'(o)$.
\end{proof}

The assumption $\dim X=2$ can be used to find special horofunctions in $\H_+(\del)$, as seen in the next Lemma. For its proof we refer to Appendix A of \cite{paper1}.

\begin{lemma}[bounding horofunctions] \label{bounding horofunctions}
For fixed $\del\in S^1$, there exist two unique $u_0,u_1\in \H_+(\del)$ with the following property: for all sequences $u_n\in \H_+(\del_n)$ with $\del_n\to \del$ and $\del_n\neq \del$, any $C_{loc}^0$ limit lies in $\{u_0,u_1\}$. More precisely, assuming the counterclockwise orientation of $S^1$, we have
\[ \lim_{n\to\infty}u_n = \begin{cases} u_0 & : \text{ if }~ \del_n < \del ~ \forall n \\ u_1 & : \text{ if }~ \del_n > \del ~ \forall n \end{cases}. \]
\end{lemma}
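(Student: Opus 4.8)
The plan is to run the analogue, for horofunctions, of the construction of the bounding geodesics in Lemma \ref{bounding geodesics morse}: approach $\del$ monotonically from one side and show the resulting horofunctions are ``squeezed'' to a single limit, replacing the principle ``minimality forbids crossing $\M(\g)$'' by the non-crossing property of rays (Lemma \ref{crossing minimals}) together with the graph structure of the sets $\J_+(u)$ from Lemma \ref{lemma fathi}. The compactness input is that every horofunction is $c_F$-Lipschitz for $d_g$ with $u(o)=0$, so $\H_+$ is precompact in $C_{loc}^0(X)$ by Arzel\`a--Ascoli; moreover a diagonal argument over the defining sequences shows that every $C_{loc}^0$-limit of a sequence $u_n\in \H_+(\del_n)$ with $\del_n\to\del$ already lies in $\H_+(\del)$. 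Hence, after passing to a subsequence on which all $\del_n$ lie on one fixed side of $\del$ and then to a monotone sub-subsequence, it suffices to treat a monotone sequence $\del_n\uparrow\del$ (the case $\del_n\downarrow\del$ being symmetric) and to show that for any choices $u_n\in\H_+(\del_n)$ the sequence $u_n$ converges in $C_{loc}^0$ to a limit $u_0$ that is independent of $(\del_n)$ and of the choices; $u_1$ is then the analogous right-hand limit, and both lie in $\H_+(\del)$ by the above.

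Next I would set up the order structure, which is where $\dim X=2$ enters. Two forward rays issuing from a common point $x$ with distinct endpoints on $S^1$ can meet only in $x$ (a second intersection is ruled out by Lemma \ref{crossing minimals}), so on any sub-arc $I\subset S^1$ the map sending the endpoint of a ray from $x$ to its initial velocity in $S_xX:=SX\cap T_xX$ is monotone for the circular orders; together with the Morse Lemma \ref{morse lemma} and the negative curvature of $g$ this also gives that rays from nearby points with distinct endpoints $d_g$-diverge, and, after applying $\phi_F^\e$, that $\J_+(u)$ is a locally Lipschitz graph over $X$ (Lemma \ref{lemma fathi}). The conclusion to extract is: over a fixed base point $x$, the rays coming from $\J_+(u)$ with $u\in\H_+(\eta)$, $\eta\in I$, are ordered in $S_xX$ consistently with the order of the directions $\eta$ on $I$.

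Finally I would run the squeezing argument. Let $u=\lim_k u_{n_k}$ and $u'=\lim_j u_{m_j}$ be two subsequential limits of left-approaching sequences; after interleaving, assume $\del_{n_1}<\del_{m_1}<\del_{n_2}<\del_{m_2}<\cdots\uparrow\del$. Fix $x$ at which both $u$ and $u'$ are differentiable (full measure) and, using $\pi\J_+(u_n)=X$, pick $v_n\in\J_+(u_n)\cap T_xX$. Any subsequential limit of $(v_{n_k})_k$ lies in $\J_+(u)\cap T_xX$, which by Lemma \ref{lemma fathi} is the single vector $\grad_F u(x)=:v$, so $v_{n_k}\to v$; likewise $v_{m_j}\to \grad_F u'(x)=:v'$. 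By the order structure the interleaved velocities $v_{n_1},v_{m_1},v_{n_2},v_{m_2},\dots$ form a monotone sequence inside a fixed sub-arc of $S_xX$, hence converge; comparing the two sub-sequences gives $v=v'$. As $x$ ranged over a full-measure set, $du=du'$ a.e., so $u-u'$ is constant, and $u(o)=0=u'(o)$ forces $u=u'$ (cf. the proof of Corollary \ref{J=J' then u=u'}). Thus all left-hand subsequential limits coincide; precompactness then gives $u_n\to u_0$ for every left-approaching sequence, and symmetrically $u_n\to u_1$ for every right-approaching one --- which is exactly the ``more precisely'' assertion --- and uniqueness of $u_0,u_1$ is built into their definition as these one-sided limits.

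I expect the main obstacle to be the order structure of the second step: making the statement ``rays are cyclically ordered by their endpoints'' precise, given that the sets $\J_+(u)$ need not be single-valued and are only semicontinuous in $u$, and given that horofunctions converge merely in $C_{loc}^0$ rather than in $C^1$ (so one must route the argument through the Lipschitz-graph consequence of Lemma \ref{lemma fathi} after flowing by $\phi_F^\e$, and through a.e.\ differentiability). This is precisely the place where two-dimensionality is essential, and it is the reason one obtains \emph{precisely two} bounding horofunctions rather than one or many.
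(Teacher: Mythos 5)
The paper does not give a proof of Lemma~\ref{bounding horofunctions}; it refers to Appendix~A of \cite{paper1}, so a line-by-line comparison is not possible from the text at hand. On its own merits, though, your argument is sound and follows the natural route: Arzel\`a--Ascoli on the $c_F$-Lipschitz family $\H_+$ normalized by $u(o)=0$, a diagonal argument to see that limits of $u_n\in\H_+(\del_n)$ with $\del_n\to\del$ already lie in $\H_+(\del)$, reduction to monotone one-sided sequences, and then squeezing via the planar order of rays through a fixed basepoint and the identification $\J_+(u)\cap T_xX=\{\grad_Fu(x)\}$ at points of differentiability (Lemma~\ref{lemma fathi}), concluding $du=du'$ a.e.\ exactly as in Corollary~\ref{J=J' then u=u'}.

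One step deserves to be made explicit, because as written it is not quite automatic. You assert that the interleaved initial velocities form a monotone sequence ``inside a fixed sub-arc of $S_xX$, hence converge.'' Monotonicity alone on a circle does not give convergence --- a cyclically monotone sequence can wrap around indefinitely --- so the boundedness really does need a justification. The quickest one: pick a ray $c$ from $x$ with $c(\infty)=\del$ and a ray $c'$ from $x$ with $c'(\infty)=\del'$ for some $\del'$ strictly on the opposite side of $\del$ from the $\del_n$. All $\del_{n_k},\del_{m_j}$ lie in the open arc of $S^1-\{\del,\del'\}$ not containing the short arc between $\del$ and $\del'$; by the non-crossing property (Lemma~\ref{crossing minimals}) the corresponding rays from $x$ stay in the sector of $X$ bounded by $c,c'$ over that arc, hence all their initial velocities lie in the corresponding \emph{proper} open sub-arc of $S_xX$ determined by $\dot c(0),\dot c'(0)$. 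A monotone sequence in a proper sub-arc of a circle converges, which is what you need; combined with $v_{n_k}\to\grad_Fu(x)$ and $v_{m_j}\to\grad_Fu'(x)$ (each by compactness of $S_xX$ together with Lemma~\ref{lemma fathi}) this closes the gap. Also, the phrase ``the map sending the endpoint of a ray from $x$ to its initial velocity is monotone'' should be stated as a monotonicity of the correspondence, since there may be several rays from $x$ to a given endpoint; what you actually use --- that distinct rays from $x$ with distinct endpoints are disjoint away from $x$, so endpoints and velocities respect the cyclic orders --- is correct and suffices.
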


We will use Lemma \ref{bounding horofunctions} in order to obtain ``recurrent horofunctions''. Namely, for $\tau\in\Gam$ and $u\in \H_+(\del)$ define
\[ (\tau u)(x) := u\circ \tau^{-1}(x)- u\circ \tau^{-1}(o). \]
Then $\tau u$ is the horofunction with
\[ \J_+(\tau u) = \{ d\tau(v) : v\in \J_+(u) \}, \]
and in particular $\tau u\in \H_+(\tau\del)$. Now, if $\{\tau_k\}\subset\Gam$ is a sequence of deck transformations, such that $\tau_k\del\to\del$ and $\del < \tau_k\del$ for all $k$ in the counterclockwise orientation of $S^1$, then Lemma \ref{bounding horofunctions} shows
\[ \tau_k u_1 \to u_1 \qquad \text{ in } C_{loc}^0. \]

\section{The proof of the Main Theorem}\label{section main}

We define the following numbers, characterizing the ``width'' of asymptotic directions in $X$.

\begin{defn}\label{def width}
For $\del\in S^1$ and $\g\in\G$ set
\begin{align*}
w_\pm(\del) &:= \sup \big\{ \liminf_{t\to \pm\infty} d_g(c_v(\R),c_w(t)) : v,w\in\RR_\pm(\del) \big\} , \\
w_0(\g) &:= \sup \big\{ \inf_{t\in\R} d_g(c_v(\R),c_w(t))  : v,w\in\M(\g) \big\} .
\end{align*}
\end{defn}

\begin{remark}
 It is easy to see that, if $\g\in \G$ and if $c_\g^0,c_\g^1$ are the two bounding geodesics of $\M(\g)$ given by Lemma \ref{bounding geodesics morse}, then
 \[ w_0(\g) = \inf_{t\in\R} d_g(c_\g^0(\R),c_\g^1(t)) . \]
\end{remark}

In order to make use of the width, we have to connect the behavior of $w_+$ and $w_0$ to the group $\Gam$. In the following definition we ``lift'' several dynamical notions of $g$-geodesics in the compact quotient $M$ to the covering $X$, expressing them in terms of $\G$ and $\Gam$.

\begin{defn}\label{def positive gamma}
A $g$-geodesic $\g\in\G$ is called an {\em axis}, if there exists some $\tau\in \Gam-\{\id\}$ with $\tau\g=\g$.

A sequence $\{\g_k\}\subset\G$ {\em converges} to $\g\in\G$, if for the pairs of endpoints $\g_k(-\infty) \to \g(-\infty)$ and $\g_k(\infty)\to \g(\infty)$ in $S^1$.

A sequence $\{\tau_k\}\subset\Gamma$ is \emph{positive for $\g\in\G$}, if there exists a sequence $\{x_k\}\subset \g$ with $x_k\to \g(\infty)\in S^1$ (with respect to the topology of $\C\supset X$) and a compact set $K\subset X$, such that $\tau_kx_k\in K$ for all $k$.

A $g$-geodesic $\g\in \G$ is {\em forward recurrent}, if there exists a positive sequence $\{\tau_k\}\subset\Gam$ with $\tau_k\g\to\g$.

A subset $G\subset \G$ is {\em minimal}, if it is closed under the above notion of convergence and $\Gam$-invariant, i.e. $\tau\g\in G$ for all $\tau\in \Gam$ and $\g\in G$, and if $G$ contains no non-trivial, closed and $\Gam$-invariant subsets.

A minimal set $G\subset\G$ is {\em periodic}, if $G$ consists of all $\Gam$-translates of a single axis $\g\in\G$.
\end{defn}

Intuitively, a sequence $\{\tau_k\}$ is positive for $\g\in \G$, if $\{\tau_k\g\}$ describes the behavior of $\g(t)$ in the compact quotient $M$, as $t\to\infty$. Note that, if $G_0$ is a minimal closed and $\phi_g^t$-invariant subset of the $g$-unit tangent bundle of $M$, then the set of $g$-geodesics $G\subset \G$, which project into $G_0$, is minimal in the sense of Definition \ref{def positive gamma}. Also observe that, if $G$ is minimal, then for all $\g\in G$ there exists a positive (and a negative) sequence $\{\tau_k\}\subset\Gam$ for $\g$ with $\overline{\{\tau_k\g\}} = G$. In particular, every $\g\in G$ is bi-recurrent.

\begin{lemma}[upper semi-continuity of width]\label{width semi-cont}
Let $\g,\g'\in\G$ and $\{\tau_k\}\subset\Gamma$ be a positive sequence for $\g$, such that $\tau_k \g \to \g'$. Then $w_+(\g(\infty)) \leq w_0(\g')$.
\end{lemma}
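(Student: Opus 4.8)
The plan is to transport far-away pieces of the two rays back into a fixed compact region by the deck transformations $\tau_k$, so that the question about the direction $\g(\infty)$ turns into a question about $\M(\g')$. Concretely, fix $v,w\in\RR_+(\del)$ with $\del:=\g(\infty)$, put $\ell:=\liminf_{t\to\infty}d_g(c_v(\R),c_w(t))$, and aim to construct $\tilde v,\tilde w\in\M(\g')$ with $d_g(c_{\tilde v}(r),c_{\tilde w}(r'))\ge\ell$ for all $r,r'\in\R$. Then $w_0(\g')\ge\inf_{r'}d_g(c_{\tilde v}(\R),c_{\tilde w}(r'))\ge\ell$, and taking the supremum over $v,w\in\RR_+(\del)$ gives $w_0(\g')\ge w_+(\g(\infty))$.

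\textbf{Recentering.} Since $\{\tau_k\}$ is positive for $\g$, choose $x_k\in\g$ with $x_k\to\g(\infty)$ and a compact $K\subset X$ with $\tau_kx_k\in K$ for all $k$. By the Morse Lemma (which holds also for ideal endpoints), $c_v$ and $c_w$ stay within distance $D$ of the $g$-rays from $c_v(0),c_w(0)$ to $\g(\infty)$, and these are asymptotic to $\g$, so $d_g(c_v(t),\g),\,d_g(c_w(t),\g)\le C$ for all large $t$ and some constant $C$. As moreover $c_v(t),c_w(t)\to\g(\infty)$, the nearest-point projection onto $\g$ of $c_v([T,\infty))$ (for $T$ large) is an interval of $\g$ accumulating at $\g(\infty)$, hence contains all of $\g$ beyond some point, and likewise for $c_w$. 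Therefore, for all large $k$ one can pick parameters $s_k,t_k$ with $d_g(c_v(s_k),x_k)\le C$ and $d_g(c_w(t_k),x_k)\le C$, and these tend to $\infty$. Set $\tilde v_k:=\phi_F^{s_k}(d\tau_kv)$ and $\tilde w_k:=\phi_F^{t_k}(d\tau_kw)$. Since $\tau_k$ is an $F$-isometry, $c_{\tilde v_k}(r)=\tau_kc_v(s_k+r)$ and $c_{\tilde w_k}(r)=\tau_kc_w(t_k+r)$ whenever the arguments are $\ge0$, and the base points $\pi\tilde v_k,\pi\tilde w_k$ lie in the compact set $\{z\in X:d_g(z,K)\le C\}$; passing to a subsequence, $\tilde v_k\to\tilde v$ and $\tilde w_k\to\tilde w$ in $SX$.

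\textbf{The limits lie in $\M(\g')$, and the distance bound.} As $\tilde v_k\in\phi_F^{s_k}(\RR_+)\subset\RR_+$ with $\RR_+$ closed, $\tilde v\in\RR_+$ (same for $\tilde w$); and for each compact interval $[a,b]$ and $k$ large, $c_{\tilde v_k}|_{[a,b]}$ is an isometric copy of a subsegment of the ray $c_v$, hence a minimal segment, and minimality passes to the $C^1$-limit, so $c_{\tilde v}:\R\to X$ is a minimal geodesic, i.e. $\tilde v\in\M$ (same for $\tilde w$). For each fixed $r$ we have $c_{\tilde v}(r)=\lim_k\tau_kc_v(s_k+r)$ with $s_k+r\to\infty$, hence $d_g(\tau_kc_v(s_k+r),\tau_k\g)\le C$; since $\tau_k\g\to\g'$ in $\G$, hence locally uniformly, the entire geodesic $c_{\tilde v}$ stays within distance $C$ of $\g'$, and similarly for $c_{\tilde w}$. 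A complete minimal geodesic confined to a bounded neighbourhood of $\g'$ has both ideal endpoints in $\{\g'(-\infty),\g'(\infty)\}$ (examine nearest-point projections onto $\g'$ as $r\to\pm\infty$), and its two endpoints are distinct (every minimal geodesic lies in some $\M(\g)$, $\g\in\G$); since $c_{\tilde v}(\infty)=\lim_kc_{\tilde v_k}(\infty)=\lim_k\tau_k\g(\infty)=\g'(\infty)$ by continuity of the ideal endpoint on $\RR_+$, we conclude $\tilde v\in\M(\g')$, and likewise $\tilde w\in\M(\g')$. Finally, fix $r,r'\in\R$ and $\e>0$, and let $T_\e$ be such that $d_g(c_v(\R),c_w(t))\ge\ell-\e$ for $t\ge T_\e$; for $k$ large we have $s_k+r\ge0$ and $t_k+r'\ge T_\e$, so, using that $\tau_k$ is a $g$-isometry,
\[ d_g(c_{\tilde v}(r),c_{\tilde w}(r'))=\lim_{k\to\infty}d_g(c_v(s_k+r),c_w(t_k+r'))\ge\ell-\e. \]
Letting $\e\to0$ and taking the infimum over $r,r'$ completes the argument as indicated in the overall strategy.

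\textbf{Main obstacle.} The routine parts are the $C^1$-subconvergence and the fact that the limits are minimal geodesics; the real content is pinning down their pair of ideal endpoints as exactly $(\g'(-\infty),\g'(\infty))$. This is where the negatively curved $g$ is used essentially: it forces a complete minimal geodesic trapped in a bounded neighbourhood of $\g'$ to be asymptotic to $\g'$ at both ends, and this asymmetry between ``near $+\infty$'' (where we start) and ``both ends'' (where we land) is precisely why the estimate is one-directional (``upper semicontinuity''). A smaller technical point to handle carefully is the synchronization in the recentering step --- ensuring the translated base points stay in a compact set and that $s_k,t_k\to\infty$.
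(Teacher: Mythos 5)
Your argument is correct, and it takes what is surely the intended route (the paper defers the proof to Lemma 4.5 of \cite{paper1}, so a direct comparison is not possible here, but the recentering-via-positive-sequence approach is exactly the natural one for this statement). You fix $v,w\in\RR_+(\g(\infty))$, synchronize the two rays against $\g$ using nearest-point projection, translate back into a fixed compact region with $d\tau_k$, and extract $C^1$-subsequential limits $\tilde v,\tilde w$; the only substantive point, correctly identified in your ``Main obstacle'' paragraph, is pinning down the endpoints, which you handle by combining boundedness of $d_g(c_{\tilde v},\g')$ (forcing endpoints into $\{\g'(\pm\infty)\}$) with continuity of the forward endpoint map on $\RR_+$ (pinning $c_{\tilde v}(\infty)=\g'(\infty)$) and distinctness of the two endpoints of a minimal geodesic. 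The distance estimate then passes to the limit by $g$-isometry invariance of $\tau_k$ and the definition of $\liminf$, giving $\inf_{r,r'}d_g(c_{\tilde v}(r),c_{\tilde w}(r'))\ge\ell$ and hence $w_0(\g')\ge w_+(\g(\infty))$ after taking the supremum over $v,w$. Two cosmetic points: the parenthetical ``whenever the arguments are $\ge0$'' should read ``whenever $s_k+r\ge0$'' (respectively $t_k+r'\ge0$), and it is worth stating explicitly that $s_k,t_k\to\infty$ because a bounded subsequence would force the projections of $c_v(s_k),c_w(t_k)$ onto $\g$ to stay in a compact interval, contradicting $x_k\to\g(\infty)$ --- you need this both for $s_k+a\ge0$ (so that the limit is minimal on every $[a,b]$) and for $t_k+r'\ge T_\e$.
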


The proof of Lemma \ref{width semi-cont} can be found in \cite{paper1}, Lemma 4.5. In particular, since $w_0(\g)$ is bounded for all $\g\in \G$ by a global constant $D(F,g)$ by the Morse Lemma, we obtain finiteness of all widths $w_+(\del)$ of $\del\in S^1$.

\begin{cor}\label{width of recurrent dir}
 If $\g$ is forward recurrent, then $w_+(\g(\infty))=w_0(\g)$. If $G\subset \G$ is minimal, then
 \[ w_\pm|_G = w_0|_G =: w(G) = \const. \]
\end{cor}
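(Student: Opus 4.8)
The plan is to deduce Corollary~\ref{width of recurrent dir} from the upper semi-continuity of width (Lemma~\ref{width semi-cont}) together with the symmetry between forward and backward rays.

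First I would prove the claim for a forward recurrent $\g$. By definition there is a positive sequence $\{\tau_k\}\subset\Gam$ with $\tau_k\g\to\g$. Applying Lemma~\ref{width semi-cont} with $\g'=\g$ gives immediately $w_+(\g(\infty))\leq w_0(\g)$. For the reverse inequality, I would argue that $w_0(\g)\leq w_+(\g(\infty))$ holds for \emph{every} $\g\in\G$: given $v,w\in\M(\g)\subset\RR_+(\g(\infty))$, the minimal geodesics $c_v,c_w$ are in particular forward rays with asymptotic direction $\g(\infty)$, and $\inf_{t\in\R} d_g(c_v(\R),c_w(t)) \le \liminf_{t\to\infty} d_g(c_v(\R),c_w(t))$; taking suprema over $v,w\in\M(\g)$ on the left and over the larger set $\RR_+(\g(\infty))$ on the right yields $w_0(\g)\leq w_+(\g(\infty))$. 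Combining the two inequalities gives $w_+(\g(\infty))=w_0(\g)$ when $\g$ is forward recurrent.

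Next, for a minimal set $G\subset\G$, I would use the observation recorded just after Definition~\ref{def positive gamma}: every $\g\in G$ is bi-recurrent, and for any $\g,\g'\in G$ there is a positive sequence $\{\tau_k\}$ for $\g$ with $\tau_k\g\to\g'$ (because $\overline{\{\tau_k\g\}}=G\ni\g'$). Fix $\g,\g'\in G$. On one hand, the forward recurrence of $\g$ gives $w_+(\g(\infty))=w_0(\g)$, and likewise $w_+(\g'(\infty))=w_0(\g')$. On the other hand, Lemma~\ref{width semi-cont} applied to the positive sequence $\{\tau_k\}$ with $\tau_k\g\to\g'$ gives $w_+(\g(\infty))\leq w_0(\g')$, hence $w_0(\g)\leq w_0(\g')$; swapping the roles of $\g$ and $\g'$ (using a positive sequence for $\g'$ converging to $\g$) gives the opposite inequality, so $w_0$ is constant on $G$. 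Then $w_+|_G$ equals this constant by forward recurrence of each $\g\in G$. The statement $w_-|_G=w_0|_G$ follows by the analogous argument with backward rays and negative sequences, noting that every $\g\in G$ is also backward recurrent and that $w_-(\g(-\infty))\geq w_0(\g)$ by the same elementary comparison as above, applied at $-\infty$; reversing orientation identifies $G$ with a minimal set for the reversed dynamics, so the forward argument transfers verbatim.

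The step I expect to require the most care is not any single inequality but making sure the ``elementary comparison'' $w_0(\g)\leq w_\pm(\g(\pm\infty))$ is set up with the right sets and the right limits — in particular that $\M(\g)\subset\RR_+(\g(\infty))\cap\RR_-(\g(-\infty))$ and that the $\inf_{t\in\R}$ defining $w_0$ is genuinely dominated by the one-sided $\liminf$ defining $w_\pm$. Everything else is a bookkeeping exercise in chaining Lemma~\ref{width semi-cont} across pairs of geodesics in the minimal set and invoking the bi-recurrence noted after Definition~\ref{def positive gamma}.
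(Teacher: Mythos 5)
Your proposal is correct and matches what the paper has in mind: the paper states this as an unproved corollary of Lemma~\ref{width semi-cont}, and your argument supplies exactly the intended chain of reasoning — $w_+(\g(\infty))\leq w_0(\g)$ from Lemma~\ref{width semi-cont} applied with $\g'=\g$, the elementary inclusion $\M(\g)\subset\RR_\pm(\g(\pm\infty))$ together with $\inf_{t\in\R}\leq\liminf_{t\to\pm\infty}$ for the reverse inequality, chaining of Lemma~\ref{width semi-cont} across pairs in a minimal set $G$ using the observation after Definition~\ref{def positive gamma}, and the $F\mapsto F(-\cdot)$ reversal for the $w_-$ statement.
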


To prove Main Theorem \ref{main thm}, we need to show that
\[ w_+(\del)=0 \qquad \forall \del\in S^1-\Fix(\Gam). \]
Take some $\del\in S^1-\Fix(\Gam)$ and any $\g\in \G$ with $\g(\infty)=\del$. In the compact $g$-unit tangent bundle of $M$, the $\om$-limit set of $\g$ contains a minimal closed and $\phi_g^t$-invariant set. Let $G\subset \G$ be the minimal set of $g$-geodesics projecting into this minimal set. Lemma \ref{width semi-cont} and Corollary \ref{width of recurrent dir} show that
\[ w_+(\del) = w_+(\g(\infty)) \leq w(G), \]
so in order to prove Main Theorem \ref{main thm}, we will prove
\begin{align}\label{eq proves main thm}
 w(G)=0 \qquad \forall \text{ minimal, non-periodic } G\subset\G .
\end{align}

We will have to study the way that $g$-geodesics $\g\in G$ approximate other $g$-geodesics $\g'\in G$. For this, we use two lemmata, which can be found as Lemmata 4.6 and 4.7 in \cite{paper1}. 

\begin{lemma}\label{one-sided approx}
Let $\g,\g'\in \G$ and $\{\tau_k\}\subset\Gamma$ with $\tau_k \g\to \g'$, such that in $X$ we have $\tau_k \g\cap \g'=\emptyset ~\forall k$. Then $w_0(\g)=0$.
\end{lemma}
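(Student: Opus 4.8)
\textbf{Proof plan for Lemma \ref{one-sided approx}.}

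The strategy is to exploit the one-sided approach $\tau_k\g\to\g'$ with the non-crossing condition $\tau_k\g\cap\g'=\emptyset$ to force the bounding geodesics of $\M(\g)$ to be pinched together. First, recall that $w_0(\g)$ equals $\inf_{t\in\R}d_g(c_\g^0(\R),c_\g^1(t))$, where $c_\g^0,c_\g^1$ are the two bounding minimal geodesics of $\M(\g)$ furnished by Lemma \ref{bounding geodesics morse}. Since $F$ is $\Gam$-invariant, $d\tau_k$ maps $\M(\g)$ to $\M(\tau_k\g)$ and sends bounding geodesics to bounding geodesics; thus $w_0(\tau_k\g)=w_0(\g)$ for every $k$. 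So it suffices to show that $w_0(\tau_k\g)\to 0$, i.e. that the two bounding geodesics of $\M(\tau_k\g)$ come arbitrarily close somewhere.

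The key geometric point is the following. By the Morse Lemma, all minimal geodesics in $\M(\g')$ lie in the $D$-tube around $\g'$, and likewise the bounding geodesics of $\M(\tau_k\g)$ lie in the $D$-tube around $\tau_k\g$. I would pass to a subsequence so that the bounding geodesics of $\M(\tau_k\g)$ converge (in the sense of initial velocities, using compactness of $SM$ and properness) to minimal geodesics $c^0,c^1\in\M(\g')$; these are non-intersecting (as limits of non-intersecting geodesics, by the usual no-successive-crossing argument for rays), hence both lie in the closed strip $S'$ between $c_{\g'}^0$ and $c_{\g'}^1$. Now the non-crossing hypothesis enters: since $\tau_k\g\cap\g'=\emptyset$ in $X$, say all $\tau_k\g$ lie on one side of $\g'$ — here one must first note that, $\g'$ being a limit of the $\tau_k\g$, eventually all $\tau_k\g$ lie on the \emph{same} side (otherwise two of them would be forced to cross $\g'$ on opposite sides, but actually the cleanest way is: the connected family $t\mapsto\tau_k\g$ cannot jump sides without meeting $\g'$, and if they alternated sides the endpoints $\tau_k\g(\pm\infty)$ would converge to $\g'(\pm\infty)$ from both sides, which is compatible; so one genuinely may have to argue via a sub-subsequence all on one fixed side). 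On that fixed side, the whole strip between $\tau_k\g$ and $\g'$, hence the whole strip containing the bounding geodesics of $\M(\tau_k\g)$ on that side, is being squeezed against $\g'$ as $k\to\infty$. Meanwhile, on the \emph{same} side one shows the \emph{other} bounding geodesic of $\M(\tau_k\g)$ is also forced toward $\g'$: indeed a minimal geodesic with both endpoints on the $\g'$-side of $\tau_k\g$'s endpoints and staying in the $D$-tube of $\tau_k\g$ cannot escape far to the other side of $\g'$ without creating a crossing with a minimal geodesic of $\M(\g')$. The upshot is that both bounding geodesics of $\M(\tau_k\g)$ converge to \emph{the same} bounding geodesic of $\M(\g')$ (namely the one on the approached side), so their mutual distance goes to $0$, giving $w_0(\g)=w_0(\tau_k\g)\to 0$, i.e. $w_0(\g)=0$.

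I expect the main obstacle to be the bookkeeping of \emph{sides}: making rigorous the claim that, after passing to a subsequence, all $\tau_k\g$ lie on one fixed side of $\g'$ and that \emph{both} bounding geodesics of $\M(\tau_k\g)$ are dragged to the bounding geodesic of $\M(\g')$ on that side, rather than one of them staying on the far side at definite distance. This is where the non-crossing hypothesis $\tau_k\g\cap\g'=\emptyset$ must be leveraged carefully, presumably via Lemma \ref{crossing minimals} (no successive intersections of rays) applied to minimal geodesics in $\M(\tau_k\g)$ versus those in $\M(\g')$: if a bounding geodesic of $\M(\tau_k\g)$ strayed a definite distance to the far side of $\g'$ for infinitely many $k$, its limit would be a minimal geodesic of $\M(\g')$ crossing $c_{\g'}^0$ or $c_{\g'}^1$ transversally, contradicting Lemma \ref{bounding geodesics morse}. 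Converting ``a definite distance to the far side'' into an actual transversal crossing of the limit is the delicate step and will use that the endpoints $\tau_k\g(\pm\infty)$ on $S^1$ approach $\g'(\pm\infty)$ strictly from the near side together with the Morse Lemma tube bound.
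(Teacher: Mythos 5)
Your overall strategy --- translate $\M(\g)$ by $\tau_k$, take limits in $\M(\g')$, and use a ``side'' constraint to force both bounding geodesics of $\M(\tau_k\g)$ to the same limit, whence $w_0(\g)=w_0(\tau_k\g)\to 0$ --- is the right one. But the closing mechanism you propose does not work as stated, and you yourself flag this as the unresolved ``delicate step''. Specifically, you argue that if $\tau_k c_\g^0$ (say) stayed a definite distance on the far side of $\g'$, its limit would \emph{cross} $c_{\g'}^0$ or $c_{\g'}^1$ transversally, contradicting Lemma~\ref{bounding geodesics morse}. That inference is wrong: the limit is a minimal geodesic in $\M(\g')$, hence lies \emph{inside} the strip bounded by $c_{\g'}^0, c_{\g'}^1$; it never crosses either. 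There is simply no contradiction along those lines, and if $w_0(\g')>0$ there is a priori plenty of room in the strip of $\M(\g')$ for the two limits of $\tau_k c_\g^0$ and $\tau_k c_\g^1$ to be distinct.

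The correct way to pin down both limits is to use the hypothesis $\tau_k\g\cap\g'=\emptyset$ at the level of \emph{endpoints on $S^1$}, not of distances in $X$. After passing to a subsequence so that all $\tau_k\g$ lie on one fixed side of $\g'$ (the connectedness argument you sketch for this is fine), say the side of $c_{\g'}^1$, both endpoints $\tau_k\g(\pm\infty)$ lie in the single arc of $S^1-\{\g'(\pm\infty)\}$ on that side. The bounding geodesics $\tau_k c_\g^0, \tau_k c_\g^1$ of $\M(\tau_k\g)$ have precisely those endpoints. Any of them, together with $c_{\g'}^1$, is a pair of $F$-minimal geodesics with \emph{unlinked} endpoint pairs: since a minimal geodesic separates the disc and both endpoints of $\tau_k c_\g^i$ lie in one component of $S^1-\{\g'(\pm\infty)\}$, a transversal crossing would force a second crossing, and two minimal geodesics cross at most once. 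Therefore $\tau_k c_\g^0$ and $\tau_k c_\g^1$ lie \emph{entirely} on the near side of $c_{\g'}^1$ --- outside the strip of $\M(\g')$. On the other hand, every limit of the $\tau_k c_\g^i$ is a minimal geodesic in $\M(\g')$ and hence lies \emph{inside} that strip. The only geodesic satisfying both constraints is $c_{\g'}^1$ itself, so both limits are forced to be $c_{\g'}^1$; since for fixed $k$ the bounding geodesics $\tau_k c_\g^0$ and $\tau_k c_\g^1$ keep $d_g$-distance $\geq w_0(\g)$ from each other everywhere, letting $k\to\infty$ over a fixed transversal near a point of $c_{\g'}^1$ forces $w_0(\g)=0$. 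So: your proposal has the right architecture, but the pivotal step needs the ``unlinked endpoints $\Rightarrow$ no crossing $\Rightarrow$ entirely outside the strip $\Rightarrow$ limit is the nearer bounding geodesic of $\M(\g')$'' chain of reasoning, not the ``limit would cross a bounding geodesic'' argument you reached for.
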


\begin{lemma}\label{periodic approx}
Let $\g,\g'\in \G$ and $\{\tau_k\}\subset\Gamma$ with $\tau_k \g\to \g'$, such that $\g'$ is an axis and $\tau_k\g\neq \g' ~  \forall k$. Then $w_0(\g)=0$.
\end{lemma}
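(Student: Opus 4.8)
\textbf{Proof proposal for Lemma \ref{periodic approx}.}

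The plan is to reduce the statement, by passing to a subsequence and conjugating, to a concrete model in which $\g'$ is the axis of some hyperbolic $\tau_0\in\Gam-\{\id\}$, and then to argue that minimal geodesics in $\M(\g)$ cannot stay at positive distance from one another. First I would use Corollary \ref{width of recurrent dir} together with the hypothesis $\tau_k\g\to\g'$ to replace $\g$ by $\g'$ in the computation of $w_0$: if $\{\tau_k\}$ is positive for $\g$ (which one may arrange, since convergence $\tau_k\g\to\g'$ forces a compact recurrence of the endpoints) then $w_0(\g)=w_0(\g')$ by the $\Gam$-invariance of the width and upper semi-continuity; otherwise one works with $w_0(\g')$ directly. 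So it suffices to show $w_0(\g')=0$ when $\g'$ is an axis. The key point, and I expect the main obstacle, is that $w_0(\g')$ need \emph{not} vanish — the periodic case from Theorem \ref{morse periodic} allows a genuine band $\M_{per}(\g')$ of positive width. Hence the argument must genuinely use the approximation $\tau_k\g\to\g'$ with $\tau_k\g\neq\g'$, exactly as in Lemma \ref{one-sided approx}: the geodesics $\tau_k\g$ are \emph{distinct} from the axis, and one plays them off against minimal geodesics in $\M(\g')$.

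Concretely, let $c_{\g'}^0,c_{\g'}^1$ be the two bounding geodesics of $\M(\g')$ from Lemma \ref{bounding geodesics morse}, so that by the remark after Definition \ref{def width} we have $w_0(\g')=\inf_{t\in\R}d_g(c_{\g'}^0(\R),c_{\g'}^1(t))$. Suppose for contradiction that this infimum is some $\eta>0$. I would pick, for each $k$, a minimal geodesic $c_k\in\M(\tau_k\g)$; since $\tau_k\g\cap\g'$ is, in $X$, eventually either empty or crossing at a single point, the Morse Lemma forces $c_k$ to lie in a $D$-tube around $\tau_k\g$, and $\tau_k\g\to\g'$ gives that $c_k$ converges (on compact sets, after a subsequence) to a minimal geodesic $c_\infty\in\M(\g')$, hence $c_\infty(\R)$ lies in the closed strip $S$ between $c_{\g'}^0$ and $c_{\g'}^1$. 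On the other hand, because $\tau_k\g\neq\g'$, the geodesics $\tau_k\g$ must cross from one side of $\g'$ to the other (or approach $\g'$ strictly from one side), so by choosing $\tau_k\g$ on the correct side and using minimality (two minimal geodesics with nested endpoints on $S^1$ cannot cross) one gets $c_k$ eventually escaping the strip $S$ in one of the ends near $\pm\infty$; the asymptotic-separation Lemma \ref{crossing minimals} then yields successive intersections of $c_k$ with $c_{\g'}^0$ or $c_{\g'}^1$ for large $k$, contradicting minimality — this is the same mechanism as in the proof of Lemma \ref{bounding geodesics morse}. That contradiction forces $\eta=0$, i.e.\ $w_0(\g')=0$, and therefore $w_0(\g)=0$.

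The step I expect to be most delicate is controlling \emph{where} the $\tau_k\g$ sit relative to $\g'$: one must rule out that $\tau_k\g$ always lies on the same side as, and between, the two bounding geodesics of $\M(\g')$, for then no escape from $S$ need occur. Here I would use that $\g'$ is an axis of $\tau_0$: applying powers $\tau_0^{\pm m}$ to $\tau_k\g$ slides the endpoints of $\tau_k\g$ toward $\g'(\pm\infty)$ at different rates unless $\tau_k\g=\g'$, so since $\tau_k\g\neq\g'$ one can always conjugate into a position where $\tau_k\g$ genuinely separates the two bounding geodesics, or else its endpoints pinch against one endpoint of $\g'$ — and in the latter situation the tube around $\tau_k\g$ degenerates and a direct length comparison shows $c_k$ cannot remain $\eta$-far from both $c_{\g'}^0$ and $c_{\g'}^1$. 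Once the geometry of the approximating configuration is pinned down, the rest is the standard no-successive-intersections argument from Lemma \ref{crossing minimals}.
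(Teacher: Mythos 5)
Your proposal has a genuine gap, and you actually flag it yourself but then do not repair it. You correctly observe that for an axis $\g'$ the width $w_0(\g')$ \emph{need not} vanish --- Theorem~\ref{morse periodic} allows a band $\M_{per}(\g')$ of positive width. But the contradiction you then set up is precisely ``suppose $w_0(\g')=\eta>0$,'' which is not a contradiction hypothesis at all: it is often simply true. The conclusion to be proved is $w_0(\g)=0$, and $w_0(\g)$ is genuinely a different quantity from $w_0(\g')$; your reduction ``$w_0(\g)=w_0(\g')$ by $\Gam$-invariance and upper semi-continuity'' is not justified --- Lemma~\ref{width semi-cont} gives only the one-sided inequality $w_+(\g(\infty))\leq w_0(\g')$, and Corollary~\ref{width of recurrent dir} requires $\g$ itself to be recurrent, which is not part of the hypotheses. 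So after your own correct warning, the argument proceeds to aim at a target that is both wrong and unattainable.

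The ``escape from the strip'' mechanism also does not deliver a contradiction. If $\tau_k\g$ crosses $\g'$ transversally, a minimal geodesic $c_k\in\M(\tau_k\g)$ is entitled to cross the bounding geodesics $c_{\g'}^0,c_{\g'}^1$ of $\M(\g')$ once; the Morse Lemma forces $c_k$ to follow $\tau_k\g$, not to stay in the strip $S$ around $\g'$, and a single transversal crossing produces no ``successive intersections'' in the sense of Lemma~\ref{crossing minimals}. A correct proof must argue from the assumption $w_0(\g)>0$ and deduce something absurd about the strips $\M(\tau_k\g)$ (all of width $\geq w_0(\g)$) as they accumulate near the periodic picture of $\M(\g')$; the case where the $\tau_k\g$ are eventually disjoint from $\g'$ is already Lemma~\ref{one-sided approx}, so what must be used is the crossing case, together with the periodic structure imposed by $\tau_0$. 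Your idea of conjugating by powers of $\tau_0$ to normalize where the crossing happens is the right kind of tool, but it is left as a gesture rather than developed into a contradiction. (Note also that the paper itself does not reprove this lemma --- it cites Lemma~4.7 of~\cite{paper1} --- so the comparison here is against what a correct proof must accomplish, and your proposal does not yet accomplish it.)
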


Arguing by contradiction, we assume that for some fixed minimal and non-periodic set $G\subset\G$ we have
\[ w(G)>0. \]
In the following, we will use the following notion of convergence of minimal geodesics: A {\em sequence of minimal geodesics $c_k:\R\to X$ converges to a minimal geodesic $c:\R\to X$}, if there exists a sequence $\{T_k\}\subset \R$ with $\dot c_k(T_k) \to \dot c(0)$.

\begin{lemma}\label{lemma A}
 If there exists a non-periodic, minimal set $G\subset \G$ with $w(G)>0$, then there exists a bi-recurrent $\g\in \G$, $i\in \{0,1\}$ and a backward ray $c:(-\infty,0]\to X$ with $c(-\infty)=\g(-\infty), ~ c(0)\in c_\g^i(\R)$ and for $\{j\} = \{0,1\}-\{i\}$
 \[ \liminf_{t\to-\infty} d_g(c_\g^j(\R), c(t)) = 0, \qquad \liminf_{t\to-\infty} d_g(c_\g^i(\R), c(t)) = w_0(\g). \]
\end{lemma}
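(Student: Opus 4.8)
The plan is to extract the stated structure from the assumption $w(G)>0$ via a limiting procedure, exploiting the compactness built into a minimal set together with the upper semi-continuity of the width (Lemma \ref{width semi-cont}, Corollary \ref{width of recurrent dir}) and the two ``approximation forces width to zero'' lemmata (Lemmata \ref{one-sided approx} and \ref{periodic approx}). Fix a $g$-geodesic $\g\in G$; by minimality of $G$ every such $\g$ is bi-recurrent, and by Corollary \ref{width of recurrent dir} we have $w_0(\g)=w(G)>0$ for all $\g\in G$. Using the remark after Definition \ref{def width}, $w_0(\g)=\inf_{t}d_g(c_\g^0(\R),c_\g^1(t))$ is realized (or approached) by the two bounding geodesics of $\M(\g)$. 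The idea is that a positive sequence $\{\tau_k\}$ for $\g$ with $\tau_k\g\to\g'\in G$ drags the bounding geodesics $c_\g^0,c_\g^1$ along; one of the two translated families, say applied near where $\g$ is ``most spread'', should in the limit produce a backward ray that starts on one bounding geodesic of $\M(\g')$ and accumulates on the other near $-\infty$ with exactly the extremal separation $w_0(\g')=w(G)$, while still touching the near side.

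Concretely, first I would pick a positive sequence $\{\tau_k\}\subset\Gam$ for $\g$ with $\overline{\{\tau_k\g\}}=G$ (available since $G$ is minimal), so that $\tau_k\g\to\g'$ for some $\g'\in G$; passing to a subsequence we may also assume $\tau_k^{-1}$ behaves controllably. Now look at the bounding geodesics $c_{\g}^0,c_\g^1$ of $\M(\g)$ and their $\tau_k$-images, which are the bounding geodesics of $\M(\tau_k\g)$. Along $\{x_k\}\subset\g$ with $x_k\to\g(\infty)$ and $\tau_kx_k\in K$ compact, the separation $d_g(c_\g^0(\R),c_\g^1(\cdot))$ approaches $w_0(\g)=w(G)$ as we move toward $\g(\infty)$ (using the Morse Lemma to stay within bounded $g$-distance of $\g$). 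Apply $\tau_k$ and extract a convergent subsequence of the minimal geodesics $\tau_kc_\g^1$ (in the sense of convergence of minimal geodesics defined just above the lemma, i.e.\ reparametrizing so the relevant velocities converge); the limit is a minimal geodesic $c_\infty\in\M(\g')$. Reparametrize so that the base point sits on $c_{\g'}^i(\R)$ for the appropriate $i\in\{0,1\}$ (the side toward which $\tau_k c_\g^1$ converges), and truncate to a backward ray $c:(-\infty,0]\to X$ with $c(-\infty)=\g'(-\infty)$ and $c(0)\in c_{\g'}^i(\R)$. By construction the translated separation near the ``spread'' part of $\tau_k\g$ survives in the limit as $t\to-\infty$: on one hand $c$ must approach $c_{\g'}^j(\R)$ (the bounding geodesics of $\M(\g')$ enclose everything and $c(-\infty)=\g'(-\infty)$, so $\liminf_{t\to-\infty}d_g(c_{\g'}^j(\R),c(t))$ is forced to a specific value), on the other hand it cannot come closer than $w_0(\g')=w(G)$ to $c_{\g'}^i(\R)$ infinitely often without contradicting the extremality characterization of $w_0$; combining these with $w_0(\g')=w(G)$ pins both $\liminf$'s to the claimed values $0$ and $w_0(\g')$ respectively. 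Finally rename $\g'$ as $\g$.

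The main obstacle I anticipate is making the limiting argument produce a genuine backward \emph{ray} with the two \emph{exact} $\liminf$ values rather than mere inequalities, and in particular ruling out that the separation ``leaks'' at the wrong rate. The key leverage points are: (i) $w_0$ is realized by bounding geodesics and is upper semi-continuous along positive sequences, so it cannot jump up in the limit — this gives the upper bound $\liminf d_g(c_{\g'}^i,c(t))\le w_0(\g')$; (ii) the lower bound $\liminf\ge w_0(\g')$ must come from choosing the approximation points $x_k$ on $\g$ precisely where the separation is nearly maximal, so that after applying $\tau_k$ and taking limits the near-maximal gap is present for a sequence of times going to $-\infty$; and (iii) the ``other side'' $\liminf d_g(c_{\g'}^j,c(t))=0$ follows because a backward ray asymptotic to $\g'(-\infty)$ and lying in the closed strip of $\M(\g')$ must accumulate on a bounding geodesic at $-\infty$ — otherwise one could, via Lemma \ref{one-sided approx} or Lemma \ref{periodic approx} applied to suitable translates, force $w_0$ to vanish, contradicting $w(G)>0$. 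Care is also needed to ensure $\g$ (the relabeled $\g'$) is bi-recurrent, but this is automatic since $\g'\in G$ and $G$ is minimal. I would carry out the reparametrization bookkeeping carefully, since the convergence-of-minimal-geodesics notion allows arbitrary time shifts $T_k$, and the whole content of the lemma is in pinning down where those shifts place us relative to the bounding geodesics.
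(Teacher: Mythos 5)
There is a genuine gap. The crux of the lemma is producing a backward ray that actually \emph{starts on} one bounding geodesic $c_\g^i$ of $\M(\g)$ yet \emph{accumulates} on the other bounding geodesic $c_\g^j$ as $t\to-\infty$; your proposal never explains how to force the limit geodesic onto a bounding geodesic of $\M(\g')$. A generic limit of $\tau_k c_\g^1$ is just some element of $\M(\g')$ lying somewhere in the strip between $c_{\g'}^0$ and $c_{\g'}^1$, and you cannot ``reparametrize so that the base point sits on $c_{\g'}^i(\R)$'' unless you already know the limit touches that set. The mechanism that produces the transition is missing.

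The paper's argument instead uses Lemmata \ref{one-sided approx} and \ref{periodic approx} \emph{positively}, not just as a negative sanity check: since $w(G)>0$, the translates $\tau_k\g$ must actually \emph{cross} $\g'$. Consequently $\tau_k c_\g^0$ crosses \emph{both} $c_{\g'}^0$ and $c_{\g'}^1$, at points $y_k$ and $x_k$ respectively, and one checks that $d_g(x_k,y_k)\to\infty$. A \emph{second} sequence $\tau_k'\in\Gamma$ is then chosen to recenter $x_k$ in a compact set while $\tau_k'\g'\to\g''$. The double translate $\tau_k'\tau_k c_\g^0$ then converges to a minimal geodesic carrying the transition: it emanates from $\g''(-\infty)=\lim\tau_k' y_k$ and passes through $x=\lim\tau_k' x_k$ near the other side. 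The further step (using $w_+(\g''(\infty))=w(G)<2w(G)$) to show the limit must exit the strip at $+\infty$, hence can be assumed to sit on $c_{\g''}^1$ at $t=0$, and the identification of the two extremal $\liminf$'s via the limiting strips $S_k^\pm$, are also absent from your sketch. In short: the single translation you apply and the vague recipe of ``choosing $x_k$ where the separation is nearly maximal'' do not reconstruct the two-stage normalization, the crossing points, and the diverging-distance argument that the paper needs, so the proposal would not close.
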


 \begin{figure}[!htb]\centering
 \includegraphics[scale=0.5]{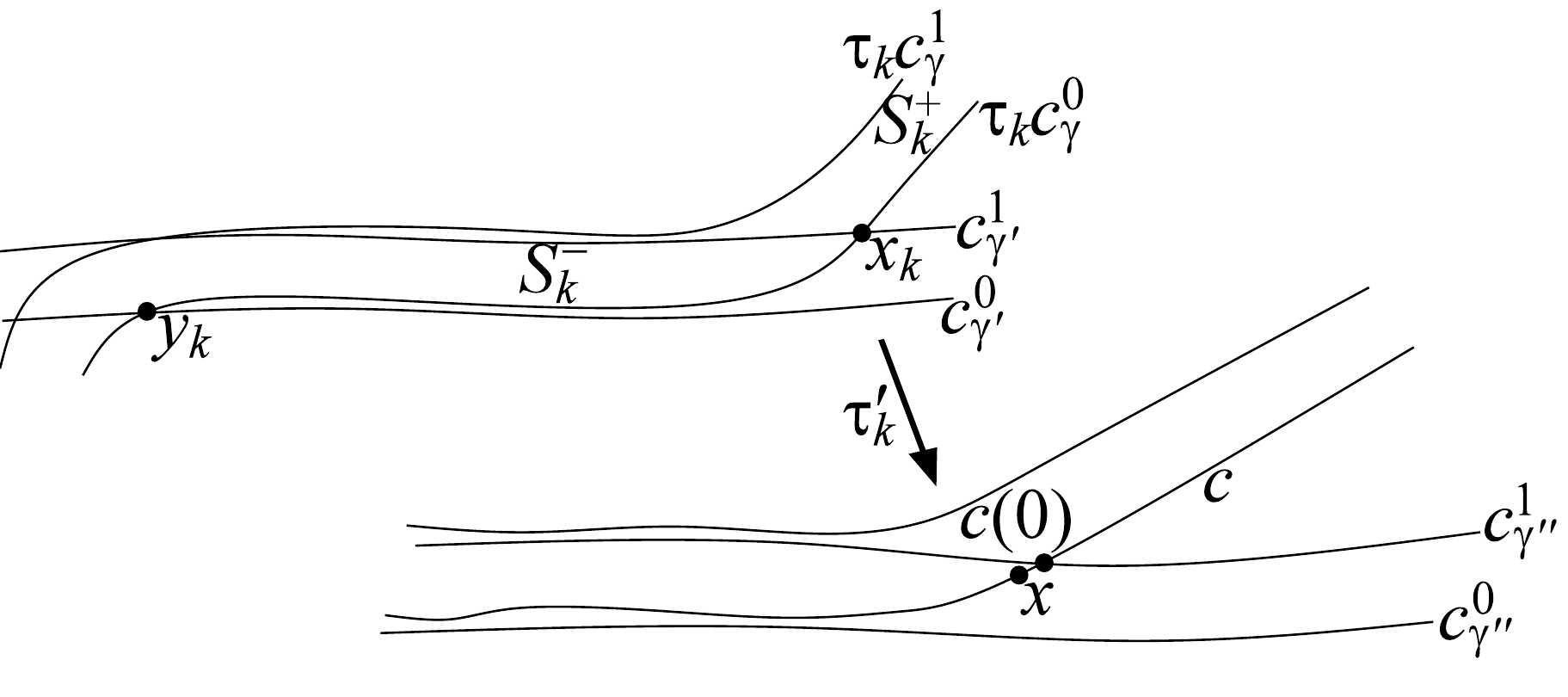}
 \caption{The objects in the proof of Lemma \ref{lemma A}. \label{fig_assumption_A}}
 \end{figure}

\begin{proof}
 As a consequence of Lemma \ref{periodic approx}, the set $G$ contains no axes. Lemma \ref{one-sided approx} then shows that, if $\g,\g'\in G$ and $\{\tau_k\}\subset \Gam$ positive for $\g$ with $\tau_k\g\to\g'$, then we have w.l.o.g.
\begin{align*}
 \tau_k\g\cap\g' \neq \emptyset \qquad \forall k.
\end{align*}
 Figure \ref{fig_assumption_A} depicts the following arguments. We can after passing to a subsequence assume that $\g_k(\pm\infty)>\g'(\pm\infty)$ for all $k$ in the counterclockwise orientation of $S^1$, the other case being analogous. We find a point of intersection $x_k$ of the two minimal geodesics $c_{\g'}^1(\R), \tau_k c_{\g}^0(\R)$ and choose a sequence $\{\tau_k'\}\subset\Gamma$, such that (after passing to subsequences) $\tau_k'\g'\to \g''\in G$ and such that $\{\tau_k'x_k\}$ converges to some $x\in X$. Let $y_k$ be a point of intersection of $c_{\g'}^0(\R), \tau_kc_{\g}^0(\R)$, then $d_g(x_k,y_k)\to\infty$ (any limit of $\{\tau_kc_{\g}^0\}$ lies in $\M(\g')$ and hence left of $c_{\g'}^0$). For $k\to\infty$, the sequence $\{\tau_k'\tau_kc_{\g}^0\}$ has a convergent subsequence with a limit minimal geodesic $c:\R\to X$, now connecting $\lim_k\tau_k'y_k = \g''(-\infty)$ to $\lim_k\tau_k'x_k = x$.

 We claim that the minimal geodesic $c(t)$ has to leave the strip between $c_{\g''}^i$ to the left, as $t\to+\infty$. To see this, observe that at each $x_k$, left of $c_{\g'}^1$, there is a strip $S_k^+$ of width $w(G)$ in the positive end of the strip between $\tau_kc_\g^i$, which will under $\{\tau_k'\}$ converge to a strip of width $w(G)$ left of $c_{\g''}^1$ starting at $x$. Since $w_+(\g''(\infty))=w(G)< 2 w(G)$ by $w(G)>0$, the limit of the strips $\tau_k'S_k^+$ cannot have asymptotic direction $\g''(\infty)$.

 Hence, we can assume that $c(0)\in c_{\g''}^1(\R)$. Moreover, the strips $S_k^-$ between $\tau_k c_\g^i$ left of $x_k$ will, under application of $\{\tau_k'\}$, converge to a strip in the negative end of the strip between $c_{\g''}^i$ (by $\tau_k'y_k \to \g''(-\infty)$ it has to lie in bounded distance of $\g''$ and by the same reasoning as above it cannot lie outside of the strip between $c_{\g''}^i$). Hence, we have
\[ \liminf_{t\to-\infty} d_g(c_{\g''}^0(\R), c(t)) = 0, \qquad \liminf_{t\to-\infty} d_g(c_{\g''}^1(\R), c(t)) = w(G) = w_0(\g''). \]
 The $g$-geodesic $\g''\in G$ is the $\g$ from the statement of the lemma. Bi-recurrence of $\g''$ follows from the minimality of $G$.
\end{proof}

For simplicity, we turn the picture around, replacing negative recurrence etc. by forward recurrence etc. (formally by considering the Finsler metric $v\mapsto F(-v)$).

\begin{defn}
 Two rays $c_0,c_1:[0,\infty)\to X$ with $c_i(\infty)=\del$ are {\em fully separated at $+\infty$}, if
 \[ \liminf_{t\to\infty} d_g(c_0[0,\infty), c_1(t)) = w_+(\del) . \]
 Analogously we explain when two rays in $\RR_-(\g(-\infty))$ is fully separated from each other at $-\infty$.
\end{defn}

In view of Lemma \ref{lemma A}, the proof of the following theorem will finish the proof of Main Theorem \ref{main thm}.

\begin{thm}\label{thm fully separated recurrent}
 If $\g\in \G$ is bi-recurrent with $w_0(\g)>0$ and if $c_*$ is any of the bounding geodesics $c_\g^0,c_\g^1$ of $\M(\g)$, then there cannot exist a ray $c$ in $\RR_+(\g(\infty))$ initiating from $c_*(\R)$ and being fully separated from $c_*$ at $+\infty$. 
\end{thm}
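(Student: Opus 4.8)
The plan is to argue by contradiction: suppose $\g$ is bi-recurrent, $w_0(\g) > 0$, and there is a ray $c \in \RR_+(\g(\infty))$ starting at a point of $c_*(\R)$ (say $c_* = c_\g^1$) which is fully separated from $c_*$ at $+\infty$. By Lemma \ref{bounding geodesics morse}, $c$ eventually lies in the closed strip $S$ between $c_\g^0$ and $c_\g^1$; being fully separated at $+\infty$ means it accumulates against $c_\g^0$, the other side of the strip. The idea is to exploit forward recurrence of $\g$ to ``copy'' this crossing configuration infinitely often and derive an intersection between a ray and a bounding geodesic that violates Lemma \ref{crossing minimals}. First I would fix a positive sequence $\{\tau_k\} \subset \Gam$ for $\g$ with $\tau_k \g \to \g$ (available since $\g$ is bi-recurrent, hence forward recurrent by the remark after Definition \ref{def positive gamma}), and normalize so that $\tau_k c_\g^1 \to c_\g^1$ and $\tau_k c_\g^0 \to c_\g^0$ in the sense of convergence of minimal geodesics. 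Choosing $x_k \in \g$ with $x_k \to \g(\infty)$ and $\tau_k x_k$ bounded, the point is that the ray $c$ near its starting portion sits in bounded $d_g$-distance from $\g$ near $\g(0)$, while its far portion (where it approaches $c_\g^0$) gets carried by $\tau_k$ into a bounded region, producing in the limit a backward-ray or minimal-geodesic configuration that is pinned to $c_\g^0$ on the $-\infty$ side.

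The key steps, in order: (1) use the Morse Lemma and Lemma \ref{bounding geodesics morse} to confine $c$ to the strip $S$ and record that full separation forces $\liminf_{t\to\infty} d_g(c_\g^0(\R), c(t)) = 0$, i.e. there are times $t_k \to \infty$ with $c(t_k)$ converging (after translating by suitable $\sig_k \in \Gam$ making $\sig_k c(t_k)$ bounded) to a point on $c_\g^0(\R)$ — here one uses that any $\Gam$-limit of $\{\sig_k c\}$ is again a ray with asymptotic direction a $\Gam$-translate of $\g(\infty)$, together with the bi-recurrence of $\g$ so that we may arrange $\sig_k \g \to \g$; (2) pass to the limit to obtain either a minimal geodesic in $\M(\g)$ or a two-sided ray whose one end is asymptotic to $\g(\infty)$ and which meets $c_\g^0(\R)$ — but any minimal geodesic or forward ray asymptotic to $\g(\infty)$ that touches the boundary geodesic $c_\g^0$ must, by Lemma \ref{crossing minimals} applied to $c_\g^0$ and the limit object, actually coincide with $c_\g^0$; (3) conclude that the original ray $c$ is uniformly close to $c_\g^0$ on the relevant long time intervals — but $c$ also starts on $c_\g^1$, and $c_\g^0, c_\g^1$ are disjoint at $g$-distance $w_0(\g) > 0$, so for the limit object to lie on $c_\g^0$ it must have ``crossed'' the strip, forcing $c$ itself to have a genuine transversal intersection with one of the bounding geodesics at a positive parameter — contradicting Lemma \ref{crossing minimals}. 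The alternative organization, closer to the proof of Lemma \ref{lemma A}, is to directly build from $\{\tau_k\}$ a limit ray that simultaneously starts at $c_\g^0(\R)$ (limit of the start of $c$, which is on $c_\g^1$, pushed forward) and returns to meet $c_\g^1(\R)$ near $+\infty$, i.e. a ray that is ``fully separated at both ends'' from a bounding geodesic while physically touching it, which is impossible by the crossing lemma.

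The main obstacle I anticipate is step (2): making precise the compactness/limit argument so that the translated pieces of $c$ converge to an object that (a) is a genuine ray or minimal geodesic (not something degenerate), (b) has a controlled asymptotic direction — this requires the bi-recurrence of $\g$ to guarantee $\tau_k \g(\infty)$ and $\sig_k \g(\infty)$ return to $\g(\infty)$ rather than drifting to some unrelated point of $S^1$ — and (c) actually touches $c_\g^0(\R)$ transversally rather than merely approaching it asymptotically. The subtlety is that ``$\liminf d_g = 0$'' only gives approach along a subsequence of times, so to get an actual intersection point one must either use the strip structure of Lemma \ref{bounding geodesics morse} (a ray in $S$ that comes arbitrarily close to one side but is asymptotic to $\g(\infty)$ will, after a recurrence translation, be pushed to the other side, hence must cross) or argue that the limit object is trapped in the strip between $\tau$-translates of $c_\g^0$ of width $w_0(\g)$ on one end and width $0$ approach on the other, which is the contradiction with upper semi-continuity of width (Corollary \ref{width of recurrent dir}, $w_+(\g(\infty)) = w_0(\g)$, not $2w_0(\g)$) exactly as in the final paragraph of the proof of Lemma \ref{lemma A}. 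Getting the bookkeeping of left/right sides and of which $\tau_k$-translate bounds which strip correct — and ensuring the ``doubled width'' contradiction is clean — is the delicate part; everything else is a routine application of the Morse Lemma, Lemma \ref{crossing minimals}, and the compactness of $M$.
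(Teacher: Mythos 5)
Your proposal aims for a direct geometric contradiction — force the ray $c$ to actually intersect the opposite bounding geodesic $c_\g^0$, then invoke Lemma~\ref{crossing minimals} and the strip-confinement of Lemma~\ref{bounding geodesics morse} — but you never bridge the gap between \emph{asymptotic approach} and \emph{intersection}, and that gap is exactly where the difficulty of the theorem lives. Full separation only gives $\liminf_{t\to\infty} d_g(c_\g^0(\R),c(t)) = 0$; a ray can start on $c_\g^1$, stay strictly inside the open strip for all positive time, and still accumulate on $c_\g^0$. Passing to $\Gam$-translated limits does not repair this: the limit object can likewise approach $c_\g^0$ without touching it, so step (2) of your outline never produces the transversal intersection you need for step (3). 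Your ``alternative organization'' (a limit ray fully separated at both ends) and the ``doubled width'' appeal to Corollary~\ref{width of recurrent dir} also do not yield a contradiction; they essentially reproduce the bad configuration that Lemma~\ref{lemma A} constructs, which is the \emph{input} to this theorem, not an absurdity. You anticipate these problems in your last paragraph, but I do not see how to close them along the lines you sketch.

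The paper's proof is of a genuinely different character: it is a quantitative Morse-type ``corner cutting'' argument, made possible by a nontrivial synchronization step. First (Lemma~\ref{existence recurrence}) one replaces the bounding geodesics by a pair $c_0,c_1 \in \M(\g)$ that are themselves $\{\tau_k\}$-recurrent, i.e. $d\tau_k(\dot c_*(T_k^*)) \to \dot c_*(0)$ for suitable $T_k^* \to \infty$. Second and crucially (Lemma~\ref{cor sim rec}), one shows via the horofunction calibration (Lemma~\ref{int dH = 1} plus the recurrent horofunction $u = \lim \tau_k u$ from Lemma~\ref{bounding horofunctions}) that the ray $c$ recurs with the \emph{same} times: there exist $S_k \to \infty$ with $d_g(c(S_k), \tau_k c(T_k^* + S_k)) \to 0$. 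This synchronization is what your proposal is missing entirely, and it cannot be recovered from the strip picture alone. With it in hand, the contradiction is elementary: since $\dot c(0) \neq \dot c_*(0)$ one saves a fixed $\e > 0$ by cutting the corner at $c(0) = c_*(0)$; concatenating $\tau_k c_*|_{[0,T_k^*-\delta]}$, the corner cut, $c|_{[\delta,S_k]}$, and the short jump to $\tau_k c(T_k^*+S_k)$ yields a path from $\tau_k c(0)$ to $\tau_k c(T_k^*+S_k)$ of $F$-length $\leq T_k^*+S_k - \e/4$, contradicting minimality of $c$. So your intuition that recurrence should be exploitable is right, but the correct mechanism is the horofunction-synchronized corner cut, not a limit/crossing argument.
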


We fix in the following the bi-recurrent $g$-geodesic $\g\in\G$ and a positive sequence $\{\tau_k\}\subset \Gam$ for $\g$ with $\tau_k\g\to \g$. Let us moreover assume (using Corollary \ref{width of recurrent dir})
\[ w_0(\g) = w_+(\g(\infty))=w_-(\g(-\infty))>0 . \]

We prove three lemmata before turning to the proof of Theorem \ref{thm fully separated recurrent}.

\begin{lemma}\label{int dH = 1}
 Let $\del\in S^1$ and $u\in \H_+(\del)$. Then for all $v\in \RR_+(\del)$ the function
 \[ t\in [0,\infty) \mapsto  t- u\circ c_v(t) \]
 is bounded and monotonically increasing.
\end{lemma}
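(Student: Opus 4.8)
The plan is to exploit the two defining properties of horofunctions from Lemma \ref{existence horofunction} together with the fact that $c_v$ is a forward ray with asymptotic direction $\del$, i.e. that $v \in \RR_+(\del) = \bigcup\{\J_+(u') : u' \in \H_+(\del)\}$ by Corollary \ref{as dir well-def}. First I would show monotonicity: for $0 \le s \le t$, write $F(\dot c_v) = 1$ so that the $F$-length of $c_v|_{[s,t]}$ equals $t-s$, and since $c_v$ is minimal this length equals $d_F(c_v(s), c_v(t))$. Property (1) of Lemma \ref{existence horofunction} gives $u\circ c_v(t) - u\circ c_v(s) \le d_F(c_v(s),c_v(t)) = t-s$, hence $\big(t - u\circ c_v(t)\big) - \big(s - u\circ c_v(s)\big) \ge 0$. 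This is the easy half and uses only (1) plus minimality.

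For boundedness, the idea is that $c_v$ and the ray associated to $u$ through a given point are both asymptotic to the same boundary point $\del$, so by the Morse Lemma they stay a bounded $d_g$-distance — hence, via uniform equivalence of $F$ and $g$, a bounded $d_F$-distance — apart along suitable comparison segments. Concretely: pick $u' \in \H_+(\del)$ with $v \in \J_+(u')$, so $u'\circ c_v(t) - u'\circ c_v(0) = t$ for all $t \ge 0$, i.e. $t = u'\circ c_v(t) - u'(c_v(0))$. Then $t - u\circ c_v(t) = \big(u' - u\big)(c_v(t)) - u'(c_v(0))$, so it suffices to bound $(u'-u)(c_v(t))$ as $t \to \infty$. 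Both $u$ and $u'$ lie in $\H_+(\del)$; using property (2), at the point $c_v(t)$ there is a $u$-calibrated ray $c^u_t$ and the $u'$-calibrated ray is $c_v|_{[t,\infty)}$ itself, both with endpoint $\del$, both starting at $c_v(t)$. Comparing $u$ and $u'$ along a long segment of $c_v$ and using $|u-u'|$ Lipschitz in $d_g$ together with the Morse Lemma (all these rays track the same $g$-ray to $\del$ within width $D$), one controls $|(u-u')(c_v(t)) - (u-u')(c_v(0))|$ uniformly in $t$. Alternatively, and perhaps more cleanly, I would bound $t - u\circ c_v(t)$ directly: by property (2) there is a $u$-ray $c^u$ from $c_v(0)$ with $u(c^u(s)) - u(c_v(0)) = s$; since $c^u$ and $c_v$ both go to $\del$ from the same point, the Morse Lemma gives $d_g(c^u(s), c_v(t_s)) \le 2D$ for a suitable reparametrization, and comparing $u$-values via the Lipschitz estimate plus property (1) pins $t - u\circ c_v(t)$ between $0$ and a constant multiple of $D$.

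The main obstacle I anticipate is making the comparison between the two asymptotic rays quantitative in the right variable: the Morse Lemma controls $d_g$ between a geodesic and a minimal segment with the \emph{same} endpoints, so one has to be a little careful to set up finite segments (from $c_v(0)$ to $c_v(T)$, say, with $T \to \infty$) whose endpoints converge to $\del$, apply Morse there, and then pass to the limit — ensuring the bound is independent of $T$ and hence of $t$. Once the $d_g$-distance between $c_v(t)$ and the $u$-calibrated ray through $c_v(0)$ is bounded by a constant depending only on $F,g$, the Lipschitz property $|u(x)-u(y)| \le c_F\, d_g(x,y)$ of horofunctions converts this immediately into the desired bound on $t - u\circ c_v(t)$.
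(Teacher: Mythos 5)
Your proposal takes essentially the paper's route. The monotonicity half is identical: property (1) of Lemma \ref{existence horofunction}, combined with $F(\dot c_v)=1$ and minimality of $c_v$, gives $a-u\circ c_v(a)\le b-u\circ c_v(b)$ for $a\le b$. For the upper bound, the paper also compares $c_v$ with a $u$-calibrated ray $c_0$ (i.e.\ $\dot c_0\in\J_+(u)$), applies the Morse Lemma to get $B:=\sup_{t\ge 0}d_g(c_0[0,\infty),c_v(t))<\infty$, and then invokes the $c_F$-Lipschitz property of $u$.

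The step you flag as ``the main obstacle'' and do not carry out is exactly the non-trivial part, and it is not merely the extension of the Morse Lemma to endpoints on $S^1$ (the paper disposes of that already in Section 2): knowing that for each $t$ some $s\ge 0$ satisfies $d_g(c_0(s),c_v(t))\le B$ does not yet bound $t-u\circ c_v(t)$; one also needs $|t-s|$ bounded, and this is where minimality of \emph{both} rays enters via a triangle inequality. The paper argues: if $s\le t$, then $t=d_F(c_v(0),c_v(t))\le d_F(c_v(0),c_0(0))+d_F(c_0(0),c_0(s))+d_F(c_0(s),c_v(t))\le d_F(c_v(0),c_0(0))+s+c_F B$, and symmetrically if $t\le s$; hence $|t-s|$ is bounded by a constant $C$. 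This yields $d_g(c_0(t),c_v(t))\le c_F(|t-s|+c_F B)$, so the Lipschitz estimate gives $|u\circ c_v(t)-u\circ c_0(t)|$ bounded, and $u\circ c_0(t)=t+u\circ c_0(0)$ finishes. Note your first variant (reducing to a bound on $(u'-u)(c_v(t))$) has the same unresolved issue: the Lipschitz property alone cannot control the value of $u'-u$ at points escaping to infinity, and you would still need this minimality/triangle-inequality comparison against a calibrated ray.
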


\begin{proof}
 We let $c,c_0:[0,\infty)\to X$ be two rays with $\dot c,\dot c_0\in \RR_+(\del)$. Recall that by property (1) in Lemma \ref{existence horofunction} and $u\in\H_+$, we have for $a\leq b$
 \[ a- u\circ c(a) \leq b-u\circ c(b) , \]
 hence the lower bound by $-u\circ c(0)$ and monotonicity are trivial. By the Morse Lemma, we find
 \[ B := \sup_{t\geq 0}d_g(c_0[0,\infty),c(t)) <\infty, \]
 and hence for all $t\geq 0$ we find some $s\in \R$ with $d_g(c_0(s),c(t))\leq B$. We first assume that $s\leq t$, then by minimality of $c,c_0$ and the triangle inequality
 \begin{align*}
   t & = d_F(c(0),c(t)) \\
   & \leq d_F(c(0),c_0(0)) + d_F(c_0(0),c_0(s)) + d_F(c_0(s),c(t)) \\
   & \leq d_F(c(0),c_0(0) + s + c_F\cdot B.
 \end{align*}
 If $t\leq s$, then similarly
 \begin{align*}
  s & = d_F(c_0(0),c_0(s)) \\
  & \leq d_F(c_0(0),c(0)) + d_F(c(0),c(t)) + d_F(c(t),c_0(s)) \\
  & \leq d_F(c_0(0),c(0)) + t + c_F\cdot B.
 \end{align*}
 In any case, $|t-s|$ is bounded from above by $d_F(c(0),c_0(0) + c_F B$ and hence
 \begin{align*}
  \frac{1}{c_F} \cdot d_g(c_0(t),c(t)) & \leq d_F(c_0(t),c(t)) \\
  & \leq d_F(c_0(t),c_0(s))+d_F(c_0(s),c(t)) \\
  & \leq |t-s|+c_F\cdot B \\
  & \leq d_F(c(0),c_0(0) + 2 \cdot c_F \cdot B =: C. 
 \end{align*}
 The fact that $u$ is Lipschitz with respect to $d_g$ with Lipschitz constant $c_F$ shows
 \begin{align*}
   |u\circ c(t) - u\circ c_0(t)|\leq c_F \cdot d_g(c_0(t), c(t) ) \leq c_F^2\cdot C.
 \end{align*}
 Assume now that $\dot c_0\in \J_+(u)$, then by definition $u\circ c_0(t) -t \equiv u \circ c_0(0)$ and hence
 \begin{align*}
  |u\circ c(t) - t| \leq |u\circ c(t) - u\circ c_0(t)| + |u\circ c_0(t) - t| \leq c_F^2 \cdot C + |u\circ c_0(0)|.
 \end{align*}
\end{proof}

The next lemma shows that, given the ``homotopy information'' $\{\tau_k\}$, that leads to forward recurrence of $\g$, we can also find forward recurrent motions in $\M(\g)$ with the same ``homotopy information''.

\begin{lemma}\label{existence recurrence}
 There exists a pair of minimal geodesics $c_0,c_1$ in $\M(\g)$ and sequences $T_k^0,T_k^1\to \infty$, such that $\dot c_0(0)$ is a limit point of $\{d\tau_k(\dot c_0(T_k^0))\}_k$ and $\dot c_1(0)$ is a limit point of $\{d\tau_k(\dot c_1(T_k^1))\}_k$, while $c_0$ and $c_1$ are fully separated at $-\infty$ and at $+\infty$.
\end{lemma}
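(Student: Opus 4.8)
The plan is to realize the desired pair of forward-recurrent minimal geodesics as limits of the bounding geodesics $c_\g^0,c_\g^1$ translated by the positive sequence $\{\tau_k\}$. First I would recall that, since $\{\tau_k\}$ is positive for $\g$, there is a sequence $x_k\in\g$ with $x_k\to\g(\infty)$ and $\tau_k x_k$ staying in a fixed compact set $K$; passing to subsequences we may assume $\tau_k x_k\to x_*\in X$ and $\tau_k\g\to\g$ (the latter is the defining property of bi-recurrence). For $i\in\{0,1\}$, look at the translated bounding geodesics $\tau_k c_\g^i$. Each $\tau_k c_\g^i$ is a minimal geodesic in $\M(\tau_k\g)$. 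Choose the parameters so that $c_\g^i$ passes near $x_k$, i.e. pick $s_k^i\in\R$ with $c_\g^i(s_k^i)$ the foot point of $x_k$ on $c_\g^i(\R)$; then $d_g(c_\g^i(s_k^i),\g(t_k))$ is bounded (foot points of points on $\g$ onto the bounding geodesics stay within the Morse-width $D$ of $\g$), so $\tau_k c_\g^i(s_k^i)$ stays in a compact set. Passing to a further subsequence, $d\tau_k(\dot c_\g^i(s_k^i))$ converges to some unit vector $v_i$, and since $\M$ is closed and $\tau_k\g\to\g$, the geodesic $c_i:=c_{v_i}$ is a minimal geodesic in $\M(\g)$. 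Re-parametrizing $c_\g^i$ so that $s_k^i=T_k^i$ (and noting $T_k^i\to+\infty$ because $x_k\to\g(\infty)$ forces the foot parameter to $+\infty$), we get exactly $\dot c_i(0)$ as a limit point of $\{d\tau_k(\dot c_i(T_k^i))\}$, which is the first assertion.

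Next I would verify full separation. For this recall the remark after Definition~\ref{def width}: $w_0(\g)=\inf_t d_g(c_\g^0(\R),c_\g^1(t))$, and that $c_\g^0,c_\g^1$ are the two extreme minimal geodesics bounding the strip containing all of $\M(\g)$. The key point is that full separation at $\pm\infty$ for the limit pair $c_0,c_1$ should be inherited from the corresponding property of $c_\g^0,c_\g^1$. Since $c_\g^0$ and $c_\g^1$ are the bounding geodesics, $\liminf_{t\to+\infty}d_g(c_\g^0(\R),c_\g^1(t))=w_+(\g(\infty))=w_0(\g)$ (using $\g$ forward recurrent, Corollary~\ref{width of recurrent dir}) and similarly at $-\infty$; moreover by the remark the infimum over all $t\in\R$ is already $w_0(\g)$, so $d_g(c_\g^0(\R),c_\g^1(t))\ge w_0(\g)$ for every $t$. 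Applying the isometries $\tau_k$ preserves $d_g$, so $d_g(\tau_k c_\g^0(\R),\tau_k c_\g^1(t))\ge w_0(\g)$ for all $k,t$; passing to the limit, $d_g(c_0(\R),c_1(t))\ge w_0(\g)$ for all $t$, hence $c_0\ne c_1$ and (since $\g$ is bi-recurrent, $w_\pm(\g(\pm\infty))=w_0(\g)$) the liminf's of $d_g(c_0(\R),c_1(t))$ as $t\to\pm\infty$ are at most $w_0(\g)$ by the very definition of $w_\pm$; combined with the pointwise lower bound they equal $w_0(\g)$, which is the definition of being fully separated at $+\infty$ and at $-\infty$.

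The main obstacle I anticipate is the bookkeeping needed to guarantee that the two limits $c_0$ and $c_1$ are obtained along a \emph{common} subsequence of $\{\tau_k\}$ with $\tau_k\g\to\g$, while still having full control of the base points at \emph{both} ends $\pm\infty$ simultaneously. One has to pass to subsequences several times (to fix $\tau_k\g\to\g$, to fix the limits $v_0$ and $v_1$, possibly to fix behavior of $c_\g^i$ near $-\infty$ under $\tau_k$), and one must check that after all these passages the parameters $T_k^i$ still tend to $+\infty$ and the separation inequality at $-\infty$ survives the limit rather than degenerating. A secondary subtlety is showing that the limit geodesics $c_0,c_1$ genuinely lie in $\M(\g)$ and not merely in $\M(\g')$ for some other $\g'$ in the $\omega$-limit — but this is exactly where $\tau_k\g\to\g$ (bi-recurrence) is used, together with the continuity $v_n\to v\Rightarrow c_{v_n}(\pm\infty)\to c_v(\pm\infty)$ recorded in Section~\ref{section morse}. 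I would also remark that, by construction as limits of the bounding geodesics, $c_0$ and $c_1$ are again the (or a pair of) bounding geodesics of $\M(\g)$, which is what makes the separation sharp; this observation will be convenient when this lemma is used in the proof of Theorem~\ref{thm fully separated recurrent}.
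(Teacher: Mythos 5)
The central gap in your argument is the recurrence claim. You construct $c_i$ as a limit $c_i = \lim_k d\tau_k(\dot c_\g^i(T_k^i))$, i.e.\ as a limit of the \emph{translated bounding geodesics}. But the lemma requires $\dot c_i(0)$ to be a limit point of $\{d\tau_k(\dot c_i(T_k^i))\}$, where the geodesic \emph{inside} the limit is the same $c_i$, not $c_\g^i$. These two statements coincide only if the limit you obtain is again the bounding geodesic, i.e.\ if $c_i = c_\g^i$. You assert this in the closing remark (``by construction as limits of the bounding geodesics, $c_0$ and $c_1$ are again the (or a pair of) bounding geodesics of $\M(\g)$''), but you give no argument for it, and it is not automatic. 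What the translated bounding geodesics \emph{do} inherit, via $\tau_k$-invariance of the sets $A_i := \{v\in\M(\g): \inf_t d_g(c_\g^{1-i}(\R),c_v(t))\ge w_0(\g)\}$, is full separation from the opposite boundary; but $A_i$ can contain many geodesics other than $c_\g^i$, so the limit of $\tau_k c_\g^i$ may land strictly in the interior of $A_i$. Since $w_0(\g)>0$ rules out one-sided approach (Lemma~\ref{one-sided approx}), the translates $\tau_k\g$ must cross $\g$, which makes it quite unclear that the ``rightmost'' geodesic of $\M(\tau_k\g)$ limits onto the ``rightmost'' geodesic of $\M(\g)$.

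The paper's proof is designed precisely to handle this: within $A_i$ it extracts a minimal closed $\{\tau_k\}$-invariant subset $\hat A_i$ by Zorn's Lemma, takes the uppermost geodesic $c_i$ of $\hat A_i$, and then shows recurrence by a monotonicity argument — by minimality of $\hat A_i$, $c_i$ is a $\{\tau_k\}$-limit of some $c\in\hat A_i$, and the order-preservation of $\{\tau_k\}$-limits within the lamination forces the uppermost limit of $\tau_k c_i$ up to $c_i$ itself. Your separation computation (lower bound by $\tau_k$-invariance of $d_g$, upper bound from $w_\pm = w_0(\g)$ by recurrence) is fine, and would carry over to the paper's $c_0,c_1$ since they still lie in $A_0,A_1$ respectively; but as written, the first assertion of the lemma is not established. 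The obstacle you anticipate — bookkeeping of subsequences — is not the real issue; the real issue is that a limit of translated bounding geodesics need not be the bounding geodesic, and some additional idea (here, a minimal-set extraction) is needed to produce a genuinely $\{\tau_k\}$-recurrent geodesic.
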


Note that it will now be enough to prove Theorem \ref{thm fully separated recurrent} for $c_*\in\{c_0,c_1\}$ given by Lemma \ref{existence recurrence}, since any ray initiating from one of the bounding geodesics will cross the corresponding $c_i$ by full separation.

\begin{proof}
 Let us say that a subset $A\subset \M(\g)$ is $\{\tau_k\}$-invariant, if for all $v$ in $A$ any limit geodesic of $\{\tau_k c_v\}$ belongs again to $A$. It follows from $w_0(\g)>0$, that the following two subsets are $\{\tau_k\}$-invariant:
 \begin{align*}
  A_0 &:= \big\{ v \in \M(\g) : \inf_{t\in\R} d_g(c_\g^1(\R),c_v(t)) \geq w_0(\g) \big\}, \\
  A_1 &:= \big\{ v \in \M(\g) : \inf_{t\in\R} d_g(c_\g^0(\R),c_v(t)) \geq w_0(\g) \big\} .
 \end{align*}
 Obviously, both sets $A_0,A_1$ are closed sets and any pair of minimal geodesics $c_0\in A_0,c_1\in A_1$ is fully separated at $\pm\infty$. Moreover, the bounding geodesics $c_\g^i$ lie in $A_i$, i.e. $A_i\neq\emptyset$. By full separation, one can also easily show that both sets $A_i$ define a lamination of $X$ (any two geodesics from $A_0$, say, come close at $\pm\infty$, then use Lemma \ref{crossing minimals}).
 
 We will argue for the set $A_0$, the case of $A_1$ being the same. We wish to find a ``minimal subset'' $\hat A_0$ of $A_0$. For this, let $\mathfrak{A}_0$ be the collection of non-empty, closed, $\{\tau_k\}$-invariant subsets of $A_0$, then $\mathfrak{A}_0\neq\emptyset$ by $A_0\in \mathfrak{A}_0$ and $\mathfrak{A}_0$ is partially ordered by $\subset$. Moreover, any linearly ordered subset of $\mathfrak{A}_0$ has the intersection of its members as a $\subset$-smallest element (the intersection is non-empty by Cantor's Intersection Theorem, if we think of the lamination $A_0$ as a set of points in some compact, transverse interval). Hence, Zorn's Lemma shows the existence of the desired $\hat A_0$, which has thus the property of not containing any non-trivial, closed, $\{\tau_k\}$-invariant subsets.
 
 Since $A_0$ defines a lamination of $X$ and $\hat A_0$ is closed, there exists an uppermost minimal geodesic $c_0$ in $\hat A_0$, and we claim that $c_0$ is $\{\tau_k\}$-recurrent. Namely, let $\hat c_0$ be the uppermost limit geodesic of $\{\tau_k c_0\}$, then $\hat c_0$ belongs to $\hat A_0$, so does not lie above $c_0$. But by minimality of $\hat A_0$, the subset of $\hat A_0$ consisting of all the $\{\tau_k\}$-limit geodesics of $\hat A_0$ equals $\hat A_0$ and hence $c_0$ is the limit of some $c$ in $\hat A_0$. By the preservation of the ordering in the foliation in $A_0$ under $\{\tau_k\}$-limits, $c_0$ is also the limit of itself: the limiting process of $\tau_k c\to c_0$ pushes also the limiting process of $\tau_k c_0\to \hat c_0$ up to $c_0$.
\end{proof}

Let $c_*:\R\to X$ be one of the minimal geodesics and $\{T_k^*\}$ be the corresponding sequence given by Lemma \ref{existence recurrence}. We replace $\{\tau_k\}$ by a subsequence in order to obtain $d\tau_k(\dot c_*(T_k^*)) \to \dot c_*(0)$. Moreover, we let $c:[0,\infty)\to X$ be a ray with $c(\infty)=c_*(\infty)$, such that $c,c_*$ are fully separated at $+\infty$. The idea behind the following lemma is that $c$ has limits, which have the same average displacement along the ``homotopy path'' $\{\tau_k\}$ as $c_*$.

\begin{lemma}\label{cor sim rec}
 With the above notation, there exists a sequence $S_k\to\infty$, such that
 \[ \liminf_{k \to \infty} d_g(c(S_k),\tau_k c(T_k^*+S_k)) = 0. \]
\end{lemma}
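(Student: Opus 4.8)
The plan is to use the horofunction machinery of Section \ref{section weak KAM} to measure ``arc length seen by the Busemann function'' along both $c_*$ and $c$, and then to exploit the recurrence $d\tau_k(\dot c_*(T_k^*))\to\dot c_*(0)$ together with full separation to force $c$ and its $\tau_k$-translates to come close. Fix the Busemann function $u\in\H_+(\g(\infty))$ of $c_*$, so that $\dot c_*\in\J_+(u)$ and $u\circ c_*(t)-u\circ c_*(0)=t$ for all $t\geq 0$. By Lemma \ref{int dH = 1}, the function $t\mapsto t-u\circ c(t)$ is bounded and monotonically increasing on $[0,\infty)$; call its limit $\ell\in[0,\infty)$. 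First I would record the behaviour of $u$ under the deck transformations: since $\tau u$ is the horofunction with $\J_+(\tau u)=\{d\tau v : v\in\J_+(u)\}$, the recurrence $d\tau_k(\dot c_*(T_k^*))\to\dot c_*(0)$ together with $\dot c_*(T_k^*)\in\J_+(u)$ shows that $d\tau_k(\dot c_*(T_k^*))\in\J_+(\tau_k u)$, and passing to a subsequence one gets $\tau_k u\to u'$ in $C^0_{loc}$ for some $u'\in\H_+(\g(\infty))$ with $\dot c_*(0)\in\J_+(u')$. In fact, because $c_*$ realizes the bounding behaviour of $A_0$ (or $A_1$) and is $\{\tau_k\}$-recurrent to its extremal side, I expect $u'$ to be one of the bounding horofunctions $u_0,u_1$ of Lemma \ref{bounding horofunctions}; in any case $u'$ is again a horofunction of direction $\g(\infty)$ with $c_*$ in its $\J_+$, and one checks $\tau_k u=u$ in the limit sense sufficient for what follows — more precisely, that $(\tau_k u)(x)-(\tau_k u)(c_*(0)) \to u(x)-u(c_*(0))$ locally uniformly.

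Next I would track the point on $c$ whose $u$-value matches a prescribed level. Define $S_k\geq 0$ to be chosen so that $c(S_k)$ sits at ``$u$-height'' approximately equal to that of $\tau_k c(T_k^*+S_k)$ measured by the translated horofunction; concretely, using that $t\mapsto u\circ c(t)$ is a proper increasing-up-to-bounded-error reparametrization of $[0,\infty)$ (by Lemma \ref{int dH = 1}, $u\circ c(t)=t-(\text{bounded, increasing})$, so $u\circ c(t)\to\infty$), pick $S_k$ with
\[
 u\circ c(S_k) = (\tau_k u)\circ\big(\tau_k c(T_k^*+S_k)\big) = u\circ c(T_k^*+S_k) - T_k^*\,,
\]
the last equality because $(\tau_k u)(\tau_k y)-(\tau_k u)(\tau_k z)=u(y)-u(z)$, so this reads $u\circ c(S_k)=u\circ c(T_k^*+S_k)-T_k^*$; since $t\mapsto t-u\circ c(t)$ converges to $\ell$, the left and right sides differ by $(T_k^*-u\circ c(T_k^*+S_k)+u\circ c(S_k))\to 0$ automatically once $S_k\to\infty$, so in fact any sequence $S_k\to\infty$ works for the ``same $u$-level'' bookkeeping — the real content is geometric, below. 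The point is that both $c(S_k)$ and $\tau_k c(T_k^*+S_k)$ are points on forward rays of direction $\g(\infty)$ at (essentially) the same $u$-level, and both lie within bounded $d_g$-distance of $\g$ by the Morse Lemma.

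The geometric heart is then: on the common level set, $c_*$-translates close up. Because $c$ and $c_*$ are fully separated at $+\infty$, $\liminf_{t\to\infty}d_g(c_*(\R),c(t))=w_+(\g(\infty))=w_0(\g)$, so along a subsequence $c(S_k)$ lies at distance $\to w_0(\g)$ from $c_*$; likewise $\tau_k c(T_k^*+S_k)$ is the $\tau_k$-image of a point of $c$ at comparable large parameter, hence at distance $\to w_0(\g)$ from $\tau_k c_*(\R)$, and since $d\tau_k(\dot c_*(T_k^*))\to\dot c_*(0)$ the curve $\tau_k c_*$ converges (in the sense of initial velocities at the matched parameter) to $c_*$ near that level. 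Combining these: both $c(S_k)$ and $\tau_k c(T_k^*+S_k)$ are forced onto the same side of $c_*$ at distance $w_0(\g)$, at the same $u$-level, in bounded distance of $\g$ — and two rays of direction $\g(\infty)$ in $\RR_+(\g(\infty))$ that are both fully separated from $c_*$ on the same side, at equal $u$-level, must (by Lemma \ref{crossing minimals}, the non-crossing of asymptotic rays, applied after extracting a limit ray) coincide in the limit, giving $d_g(c(S_k),\tau_k c(T_k^*+S_k))\to 0$ along the subsequence. I expect the main obstacle to be precisely this last compactness-and-uniqueness step: making rigorous that ``same direction, same $u$-level, same side, bounded distance from $\g$'' pins down the point up to $o(1)$ — this needs Lemma \ref{lemma fathi} (the set $\phi_F^\e(\J_+(u))$ is locally a Lipschitz graph) to convert level-set agreement into proximity of the rays themselves, plus a diagonal extraction to handle that $\tau_k c$ is a moving family rather than a fixed ray. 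Everything else (properness of $u\circ c$, the translation identity for $\tau_k u$, the Morse-Lemma distance bounds) is bookkeeping built on the lemmata already proved.
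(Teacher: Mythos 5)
Your framework matches the paper's — Busemann function $u$ of $c_*$, Lemma \ref{int dH = 1} to control $t\mapsto t-u\circ c(t)$, recurrence $\tau_k u\to u$ via Lemma \ref{bounding horofunctions}, full separation and non-crossing — but the ``geometric heart'' you flag as the main obstacle is indeed where the argument breaks, and in a way that does not repair easily.

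There are two gaps. First, you claim that $\tau_k c(T_k^*+S_k)$ lies at distance $\to w_0(\g)$ from $\tau_k c_*(\R)$, ``since $c(T_k^*+S_k)$ is a point of $c$ at comparable large parameter, hence at distance $\to w_0(\g)$ from $c_*$.'' But full separation is only a $\liminf$: $\liminf_{t\to\infty} d_g(c_*(\R),c(t)) = w_0(\g)$. You chose $S_k$ (not $T_k^*+S_k$) along the liminf-realizing subsequence. There is no reason for the shifted parameter $T_k^*+S_k$ to land in that subsequence; $c(T_k^*+S_k)$ can be much closer to $c_*$. Second, even granting ``same side, same $u$-level, distance $\approx w_0(\g)$ from $c_*$,'' this does not pin the point down: $w_0(\g)$ is the \emph{infimum} of $d_g(c_\g^0(\R),c_\g^1(t))$, so the strip between the bounding geodesics is in general wider than $w_0(\g)$ at a given $u$-level, and the constraint leaves a nontrivial set of candidate points. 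You also correctly observe that your $u$-level bookkeeping is automatically satisfied for any $S_k\to\infty$ — which means it carries no information, so the geometric step must stand alone; and it does not.

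The paper's route avoids both problems by \emph{not} prescribing the lag $T_k^*$ in advance. It takes $c_1$ a limit geodesic of $\{\tau_k c\}$, picks $S$ with $d_g(c_1(\R),c(S))\leq\e$ (this is where full separation enters, giving $\liminf_S d_g(c_1(\R),c(S))=0$), and then finds the natural lag $T_k$ for which $d_g(c(S),\tau_k c(T_k+S))\leq 2\e$. The entire content of the horofunction calibration — equations \eqref{eq c almost cal} and \eqref{recurrent u-cal} and the long chain of estimates — is precisely to prove $|T_k-T_k^*|\leq 2\e(1+c_F)$, i.e.\ that the observed lag matches the prescribed one $T_k^*$. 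In your version the horofunction bookkeeping is declared vacuous and the lag is forced to be $T_k^*$ from the start; that is exactly what cannot be assumed and what the calibration argument is needed to establish.
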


\begin{proof}
 Let $c_1$ in $\M(\g)$ be a limit geodesic of $\{\tau_k c\}$ and after taking a further subsequence, we may assume $\tau_k c \to c_1$. Since $c$ is fully separated from $c_*$, so will be $c_1$, such that
 \begin{align*}
  \liminf_{S\to\infty} d_g(c_1(\R),c(S)) = 0.
 \end{align*}
 We fix $\e>0$ and let $S \geq 0$, such that $d_g(c_1(\R),c(S)) \leq \e$. Since $\tau_kc\to c_1$, we then find some $k_0\in \N$, such that (using positivity of $\{\tau_k\}$ for $c_*$)
 \begin{align}\label{eq c almost rec}
  \exists T_k\to\infty : \qquad d_g( c(S) , \tau_k c(T_k+S) ) \leq 2\e \quad \forall k\geq k_0 . 
 \end{align}
 Moreover, by Lemma \ref{int dH = 1}, we may also assume that $S$ is large enough, such that
 \[ \left| \big[ (S+T)-u\circ c(S+T) \big] - \big[ S-u\circ c(S) \big] \right| \leq \e \quad \forall T\geq 0. \]
 and hence
 \begin{align}\label{eq c almost cal}
 \left| T_k-u\circ c(S+T_k) + u\circ c(S) \right| \leq \e .  
 \end{align}
 We let $u=\lim\tau_k u \in \H_+(c_*(\infty))$ (cf. the discussion after Lemma \ref{bounding horofunctions} for the existence of such $u$). Let us see that
 \begin{align}\label{recurrent u-cal}
  T = u\circ c_*(T)-u\circ c_*(0) \qquad \forall T\geq 0.
 \end{align}
 Indeed, by $\tau_ku\to u$ and $\tau_k c_*(. + T_k^*) \to c_*$ (both in $C_{loc}^0$) we find
  \begin{align*}
  T - u\circ c_*|_0^T & = \lim_{k\to\infty} T - (\tau_ku)\circ \tau_k c_*(. + T_k^*)|_0^T \\
  & = \lim_{k\to\infty} T - u\circ \tau_k^{-1}\circ \tau_k c_*(. + T_k^*)|_0^T \\
  & = \lim_{k\to\infty} T - u \circ c_*(. + T_k^*)|_0^T \\
  & = 0
 \end{align*}
 by Lemma \ref{int dH = 1}. Now observe, that for $k$ sufficiently large we have
 \begin{align*}
  &~ |T_k-T_k^*| \\
  \leq &~ | T_k-u\circ c(S+T_k) + u\circ c(S)| + | u\circ c(S+T_k) - u\circ c(S) - T_k^*| \\
  \stackrel{\eqref{eq c almost cal},\eqref{recurrent u-cal}}{\leq} &~ \e + | u\circ c(S+T_k) - u\circ c(S) - u\circ c_*(T_k^*) + u\circ c_*(0)| \\
  \leq &~ \e + | u\circ c(S+T_k) - u\circ \tau_k^{-1} c(S) | \\
  &~ \qquad + | u\circ \tau_k^{-1} c(S) - u\circ c(S) -u\circ \tau_k^{-1}c_*(0) + u\circ c_*(0)| \\
  &~ \qquad + | u\circ \tau_k^{-1}c_*(0)- u\circ c_*(T_k^*) | \\
  \leq &~ \e + c_F\cdot d_g(c(S+T_k) , \tau_k^{-1} c(S) ) \\
  &~ \qquad + | (\tau_ku)\circ c(S)  - u\circ c(S) -(\tau_ku)\circ c_*(0) + u\circ c_*(0)| \\
  &~ \qquad + c_F\cdot d_g (\tau_k^{-1}c_*(0) , c_*(T_k^*) ) \\
  \stackrel{\eqref{eq c almost rec}}{\leq} &~ \e + c_F\cdot 2\e \\
  &~ \qquad + | (\tau_ku)\circ c(S)  - u\circ c(S)| + |(\tau_ku)\circ c_*(0) - u\circ c_*(0)| \\
  &~ \qquad + c_F\cdot d_g (c_*(0) , \tau_k c_*(T_k^*) ) .
 \end{align*}
 By $\tau_ku\to u$ and $\tau_k c_*(. + T_k^*) \to c_*$, we find for large $k$, that
 \[ |T_k-T_k^*| \leq  2\e (1+c_F) , \]
 so that \eqref{eq c almost rec} and the triangle inequality show
 \begin{align*}
  &~ d_g( c(S) , \tau_k c(T_k^*+S) ) \\
  \leq &~ d_g( c(S) , \tau_k c(T_k+S) ) + d_g( \tau_k c(T_k+S), \tau_k c(T_k^*+S) ) \\
  \leq &~ 2\e + |T_k-T_k^* | \\
  \leq &~ 2\e + 2\e (1+c_F)
 \end{align*}
 for $k$ sufficiently large and $S$ as chosen above.
\end{proof}

\begin{figure}[!htb]\centering
\includegraphics[scale=0.35]{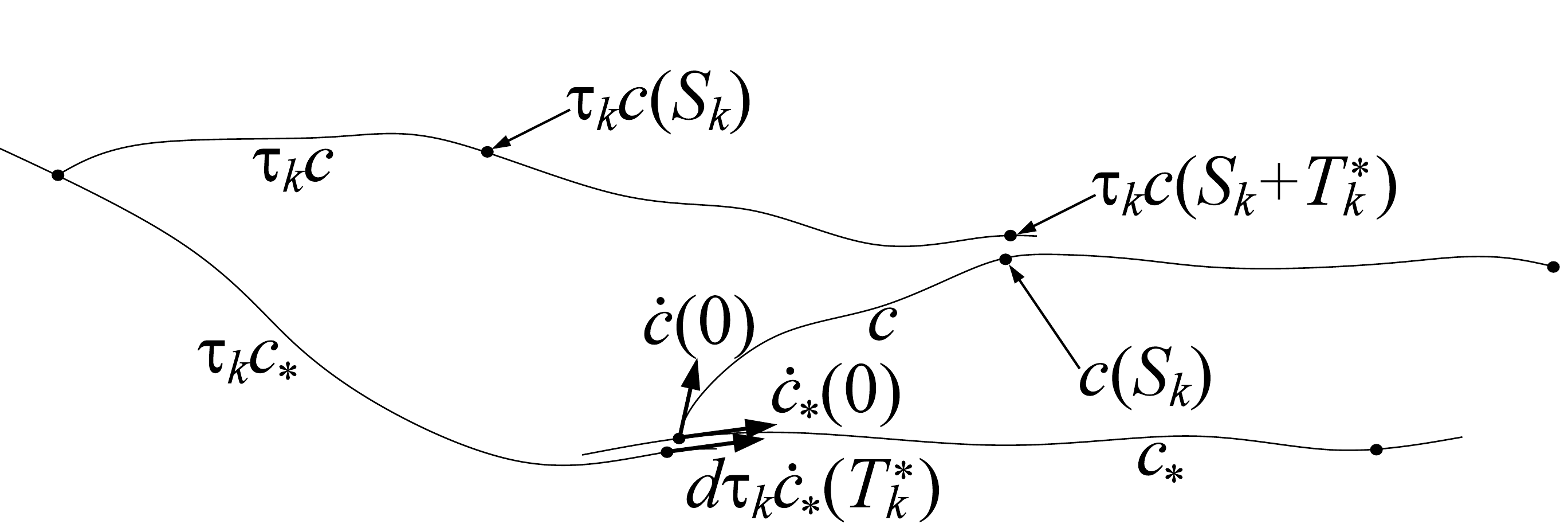}
\caption{The argument in the Proof of Theorem \ref{thm fully separated recurrent}. \label{fig morse-arg2}}
\end{figure}

\begin{proof}[Proof of Theorem \ref{thm fully separated recurrent}]
 Let $c_*$ and $\{T_k^*\}$ be given by Lemma \ref{existence recurrence}, such that after taking subsequences we have $d\tau_k(\dot c(T_k^*))\to \dot c_*(0)$. Moreover, let as above $c:[0,\infty)\to X$ be a ray with $c(0)=c_*(0)$ and $c(\infty)=c_*(\infty)$. Assuming $c$ to be fully separated from $c_*$ at $+\infty$, in particular $\dot c(0)\neq \dot c_*(0)$, we have to arrive at a contradiction. We saw in Lemma \ref{cor sim rec}, that we find a sequence $S_k\to\infty$ with $\liminf_{k \to \infty} d_g(c(S_k),\tau_k c(T_k^*+S_k)) = 0$ and after passing to another subsequence, we suppose
 \begin{align}\label{c_1-rec}
  d_F(c(S_k),\tau_k c(T_k^*+S_k)) \to 0 , \qquad k \to \infty.
 \end{align}
 By $\dot c(0)\neq \dot c_*(0)$ we find a small $\delta>0$ and, depending on the angle between $\dot c_*(0)$ and $\dot c(0)$, some $\e>0$ with
 \begin{align*}
  d_F(c_*(-\delta),c(\delta)) \leq 2\delta-\e.
 \end{align*}
 By $d\tau_k(\dot c_*(T_k^*))\to \dot c_*(0)$ we then obtain for large $k$, that
 \begin{align}\label{shorten kink}
  d_F(\tau_k c_*(T_k^*-\delta),c(\delta)) \leq 2\delta-\e/2.
 \end{align}
 We find by minimality of $\tau_k c$ and $c_*(0)=c(0)$, that for sufficiently large $k$
 \begin{align*}
  T_k^* + S_k & = d_F(\tau_k c(0),\tau_k c(S_k+T_k^*)) \\
  & \leq d_F(\tau_k c_*(0),\tau_kc_*(T_k^*-\delta))+ d_F(\tau_k c_*(T_k^*-\delta),c(\delta))  \\
      & \qquad + d_F(c(\delta), c(S_k)) + d_F(c(S_k) ,\tau_kc(S_k+T_k^*)) \\
  & \stackrel{\eqref{c_1-rec}, \eqref{shorten kink}}{\leq} d_F(\tau_k c_*(0),\tau_k c_*(T_k^*-\delta))+ 2\delta-\e/2 \\
      & \qquad + d_F(c(\delta), c(S_k)) + \e/4 \\
  & = T_k^* + S_k -\e/4 .
 \end{align*}
 This is a contradiction.
\end{proof}

\subsection{The proofs of the corollaries}\label{subsection cor}

\begin{proof}[Proof of Corollary \ref{cor no intersections}]
 The fact that rays with common endpoint $\del\notin \Fix(\Gam)$ can intersect only in a common initial point can be deduced directly from $w_+(\del) = 0$ and Lemma \ref{crossing minimals}.
\end{proof}

\begin{proof}[Proof of Corollary \ref{cor local entropy}]
 Use the arguments in the proof of Lemma 4.5 in \cite{GKOS}.
\end{proof}

\begin{proof}[Proof of Corollary \ref{cor unique busemann}]
 Let $\del\notin \Fix(\Gam)$ and $u\in \H_+(\del)$. If $v\in \RR_+(\del)$ and $\e>0$, then by Corollary \ref{cor no intersections}, the ray $c_v:[\e,\infty)\to X$ belongs to $\J_+(u)$. The closedness of $\J_+(u)$ shows $v\in \J_+(u)$, i.e. $\RR_+(\del)\subset \J_+(u)$ and using Corollary \ref{as dir well-def} we have $\RR_+(\del) = \J_+(u)$ for all $u\in \H_+(\del)$. Hence Corollary \ref{J=J' then u=u'} shows the uniqueness of the horofunction $u\in \H_+(\del)$.
\end{proof}

\begin{proof}[Proof of Corollary \ref{R Lipschitz graph}]
 The fact that $\phi_F^\e\RR_+(\del)$ with $\del\notin \Fix(\Gam)$ and $\e>0$ is locally a Lipschitz graph follows directly from Lemma \ref{lemma fathi} and $\RR_+(\del)=\J_+(u)$ in Corollary \ref{cor unique busemann}.
\end{proof}

\begin{proof}[Proof of Corollary \ref{cor unique min geod}]
 Let $\del\notin \Fix(\Gam)$ and, using the bounding geodesics of $\M(\g)$ from Lemma \ref{bounding geodesics morse}, set
\begin{align*}
 L &:= \bigcup \{ c_\g^i(\R) : \g\in \G, \g(\infty)=\del , i=0,1\} , \\
 A &:= \{ C\subset X : C \text{ connected component of $X-L$}\}. 
\end{align*}
$L$ defines a closed lamination of $X$ by Corollary \ref{cor no intersections} (for closedness recall the construction of the $c_\g^i$) and, moreover, for any $\g\in \G$ with $\g(\infty)=\del$ and $c_\g^0(\R) \neq c_\g^1(\R)$, the (connected) strip $C_\g\subset X-L$ between the $c_\g^i(\R)$ is an element of $A$. The claim follows, since $A$ is countable: $X$ is the union of countably many compact sets $K_n$, and each $K_n$ can contain at most countably many disjoint open sets.

For $\del\in\Fix(\Gam)$, the set $L$ above also defines a lamination of $X$, which can be seen directly from Theorem \ref{morse periodic} and Lemma \ref{crossing minimals}. Hence, the above reasoning applies to all $\del\in S^1$.
\end{proof}

\begin{proof}[Proof of Corollary \ref{cor backwards dir}]
 Fixing $\del\in S^1$, consider the lamination $L$ of $X$ as in the proof of Corollary \ref{cor unique min geod}. If $x\in L$, then $x$ lies on some $c_\g^i$, which determines $\g$ uniquely. If $x\in X-L$, then $x$ lies in some open strip $C_\g$, i.e. between $c_\g^0$ and $c_\g^1$, again determining $\g$ uniquely.
\end{proof}

\begin{proof}[Proof of Corollary \ref{cor generic}]
 Let $\del\in \Fix(\Gam)$. Observe that, if $\M_{per}(\g)$ contains only one minimal geodesic, then by Theorem \ref{morse periodic}, all rays in $\RR_+(\del)$ are in fact asymptotic. Hence, Main Theorem \ref{main thm} holds for $\del\in \Fix(\Gam)$ and so do its corollaries.
\end{proof}

\pagebreak

\end{document}